\documentclass[11pt,reqno]{amsart}

\usepackage[T1]{fontenc}
\usepackage[utf8]{inputenc}

\usepackage{hyperref}

\allowdisplaybreaks[4]
\usepackage{algorithm}
\usepackage{algpseudocode}

\usepackage{amsmath}
\usepackage{amssymb}
\usepackage{mathrsfs}
\usepackage{amsthm}
\usepackage{bm}
\usepackage{graphicx}
\usepackage{tikz}
\usepackage{booktabs}

\usepackage[caption=false]{subfig}
\usepackage{caption}

\usepackage{float}

\newfloat{longfigure}{htbp}{lof}
\floatname{longfigure}{Figure}
\newsubfloat{longfigure}

\usepackage{geometry}
\usepackage{color}

\usepackage{color}
\theoremstyle{plain}
\newtheorem{lemma}{Lemma}[section]
\newtheorem{theorem}[lemma]{Theorem}
\newtheorem{definition}[lemma]{Definition}

\newtheorem{remark}[lemma]{Remark}
\newtheorem{proposition}[lemma]{Proposition}
\newtheorem{assumption}{Assumption}
\newtheorem{example}{Example}

 \makeatletter
\@namedef{subjclassname@2020}{%
  \textup{2020} Mathematics Subject Classification}
\makeatother

\begin{document}
\title[Schemes for HJE on graphs]{First-Order Convergence of Monotone Schemes for Hamilton--Jacobi Equations on the Wasserstein Space on Graphs}
\author{Jianbo Cui, Tonghe Dang}
\address{Department of Applied Mathematics, The Hong Kong Polytechnic
University, Hung Hom, Kowloon, Hong Kong, SAR, China.}
\email{jianbo.cui@polyu.edu.hk;tonghe.dang@polyu.edu.hk(Corresponding author)}
\thanks{This work is supported by MOST National Key R\&D Program No. 2024FA1015900, the Hong Kong Research Grant Council GRF grant 15302823, GRF grant 15301025, NSFC/RGC Joint Research Scheme N$\_$PolyU5141/24, NSFC grant 12522119, NSFC grant 12301526, internal funds (P0041274, P0045336)  from Hong Kong Polytechnic University,  and the
CAS AMSS-PolyU Joint Laboratory of Applied Mathematics.}
\begin{abstract}

We prove first-order convergence of semi-discrete monotone finite difference schemes for Hamilton--Jacobi equations on the Wasserstein space over a finite graph. 
A central challenge is the boundary degeneracy of the Wasserstein simplex, 
which prevents the direct use of the standard 
 $L^1$ adjoint method and limits doubling-of-variables arguments to the suboptimal rate 
$\mathcal O(h^{\frac 12})$ \cite{CDM25}. We address this issue by introducing a weighted 
$L^1$
  framework with a boundary-vanishing weight and by analyzing the corresponding weighted adjoint equation for the linearized operator of the scheme, featuring a new
  geometric drift term.   Our proof relies on uniform bounds for the weighted adjoint variable and the mesh-parameter derivative of the numerical solution. 
  These estimates are derived from discrete gradient and semi-concavity bounds, obtained through a bootstrap argument for two classes of monotone Hamiltonians.



\end{abstract}
\keywords {Hamilton--Jacobi equation $\cdot$ Monotone  schemes $\cdot$ Wasserstein space on graphs $\cdot$ Convergence order $\cdot$ Adjoint method}
\subjclass[2020]{49L25, 65M15, 35R02}
\maketitle
\section{Introduction}
Hamilton--Jacobi equations (HJEs) on metric spaces 
beyond the classical Euclidean setting have attracted 
increasing attention, driven in part by applications in mean field control~\cite{BCP,CDJS,Daudin}, 
mean field games~\cite{BFY,CardaliaguetPorretta,MFG_Caines,LasryLions,CarmonaDelarue1,CarmonaDelarue2}, 
and stochastic optimal control on structured state spaces~\cite{GNT,GT,CGKPR,approximation_HJB,approximation_HJB2,XiangY1,Daudin2}. 
One of the most natural and practically relevant settings is the Wasserstein space over a finite graph, 
where the state variable is a probability distribution 
over the vertices of a weighted connected graph and 
evolves according to mass transport dynamics encoded 
by the graph topology.
This discrete geometric framework, developed 
in~\cite{Maas, Chow2, Liwuchen}, lies at the 
intersection of discrete optimal transport, Markov 
chain theory, and finite-dimensional differential 
geometry. The probability simplex is endowed with a 
Riemannian-like metric tensor that encodes the 
combinatorial structure of the graph, while the 
 associated differential operators are tailored to this geometry. 
These features make the Wasserstein space on graphs 
a natural setting for modeling population dynamics, 
multi-agent systems, and reversible Markov chains on 
discrete domains~\cite{Chow2, Cui_Hamiltonian, 
Mielke, HJSK}.
At the same time, they lead to analytical and numerical difficulties that do not arise in the classical Euclidean setting.

The HJE on the Wasserstein space over a finite 
connected graph $G=(V,E,\omega)$ takes the form
\begin{align}\label{eq_intro}
  \partial_t u(t,\xi) 
  + \mathcal{H}(\xi,\nabla_{\mathcal{W}}u(t,\xi)) 
  + \mathcal{F}(\xi) = 0,
  \quad t\in(0,T),
  \qquad
  u(0,\xi) = \mathcal{U}_0(\xi),
\end{align}
where $\xi\in\mathcal{P}(G)$ is a probability vector 
on the vertex set $V$, with $d:=|V|\geq 2$ denoting the number of vertices, $\nabla_{\mathcal{W}}$ is the 
Wasserstein gradient on $(\mathcal{P}(G),\mathcal{W})$, and $\mathcal{U}_0, \mathcal{F}$ are initial datum and potential, respectively; see Section~\ref{eq_assp} for details. The well-posedness of \eqref{eq_intro} in the viscosity sense was established in~\cite{MCC}  under certain convexity and growth conditions on the Hamiltonian $\mathcal{H}$.    
This equation arises, for example, in mean field control on graphs, where $u$ describes  the value function of a population optimization problem in which agents minimize a running cost while their empirical distribution evolves according to a graph continuity equation~\cite{MCC}.
Despite this well-posedness result, the numerical approximation of \eqref{eq_intro} remains largely undeveloped.   
Recently, in \cite{CDM25}  the authors introduced a discretization framework based on monotone finite-difference schemes on a discretized probability simplex and proved a
convergence rate $\mathcal{O}(h^{1/2})$. 
Their analysis relies on a barrier function to keep the discrete solution away from  $\partial\mathcal{P}(G)$, 
{together with a structural boundary assumption on a compact subset of the simplex}. However,
numerical experiments in~\cite{CDM25} consistently 
showed first-order accuracy in both $L^1$ and 
$L^\infty$ norms.
Explaining this discrepancy between theory and computation, and establishing a first-order convergence result, was left open in \cite{CDM25}.

The main objective of this paper is to close, in a weighted 
$L^1$
  sense, the gap between the theoretical and observed convergence rates. 
Our main result (Theorem \ref{thm1})  
 establishes the first-order error estimate 
\[\int_{\mathcal{P}(G)}|u^h(T,\xi) - u(T,\xi)|\,w(\xi)\,\mathrm d\mathscr{H}^{d-1}(\xi) \leq Ch\]
for any {admissible} weight function $w$ (see Definition~\ref{def_truncation}), under natural assumptions on {the discrete Hamiltonian and on the discrete gradient and semi-concavity of} the numerical solution (see Assumption~\ref{ass:semi-concave}). Here, $\mathscr H^{d-1}$ denotes the $(d-1)$-dimensional Hausdorff measure on $\mathcal P(G)$ (see \eqref{hausdorff}).
To this end, we need to overcome 
two fundamental obstacles.
First, the doubling-of-variables technique is intrinsically limited  to the $\mathcal{O}(h^{1/2})$ 
regime for general viscosity 
solutions~\cite{Souganidis85, Crandall84, Barles91}.
Although first-order $L^1$ error estimate  can be obtained  in the 
Euclidean setting under the semi-concavity of the exact 
solution~\cite{LT2001}, establishing comparable {regularity on the Wasserstein space over graphs is largely open, due to the intricate interaction between the graph geometry and the probability simplex constraints.}
Second, the probability 
simplex $\mathcal{P}(G)$ has a degenerate boundary $\partial\mathcal{P}(G)$ corresponding to configurations in which some vertices carry 
zero probability mass. 
At such points, the Wasserstein metric tensor degenerates 
and the viscosity solution is naturally defined only on the interior  $\mathcal{P}^\circ(G)$.
The analysis in~\cite{CDM25} {bypasses this difficulty}
by restricting {the problem} to compact subsets 
$\mathcal{P}_\epsilon(G)\subset\mathcal{P}^\circ(G)$ at distance 
$\epsilon>0$ from $\partial\mathcal{P}(G)$. However, the resulting 
$\epsilon$-dependent error prevents a {sharper} estimate on {the full probability simplex $\mathcal{P}(G)$}. Obtaining a genuine first-order rate on  $\mathcal{P}(G)$ 
therefore requires an approach that is {compatible} with the boundary 
degeneracy and avoids any $\epsilon$-dependent domain restriction.

To this end, we develop a weighted 
 $L^1$ framework based on a discrete 
adjoint method.
The key idea is to introduce a weight function 
$w\colon\mathcal{P}(G)\to[0,\infty)$ that vanishes 
on $\partial\mathcal{P}(G)$, and to measure the 
numerical error in the weighted space 
$L^1_w(\mathcal{P}(G))$. We note that the adjoint method for $L^1$ error analysis was initially developed in the context of hyperbolic conservation laws  \cite{Tadmor2}, 
and later extended to HJEs in Euclidean domains (see e.g. \cite{LT2001,Cagnetti13}).
Our weighted adjoint framework has several advantages.
First, the vanishing weight regularizes the boundary behavior and leads to a duality argument adapted to the geometry of the Wasserstein simplex. In particular, the associated weighted adjoint equation yields a first-order error estimate on the full simplex, without any domain truncation.  
Second, the adjoint variable admits a direct probabilistic interpretation as the transition kernel of a discrete diffusion process on $\mathcal{P}(G)$,  generated by the linearization of the numerical Hamiltonian.
In contrast to the Euclidean case, the adjoint equation contains an additional geometric drift term $\mathcal S$ (see \eqref{adjoint1}), which reflects the nonuniformity of the weight on the curved simplex.
Third, the weighted $L^1$ setting is, in principle, flexible enough to accommodate non-smooth initial data, since the duality argument is formulated at the level of integral averages rather than pointwise error bounds.

The proof of Theorem \ref{thm1} proceeds in several steps. We begin by deriving a weighted discrete integration-by-parts formula (see Lemma~\ref{remark_IBP}). The key observation is that the weight $w$ vanishes on the boundary of the simplex, while the numerical Hamiltonian admits a natural zero extension there. 
As a result, 
all boundary contributions cancel, leading to a weighted duality relation between the linearization of the scheme and its adjoint. 
 Building on this identity, we show that the adjoint variable is nonnegative and satisfies weighted mass conservation (see Proposition~\ref{prop_sigma}), and obtain a uniform $L^{\infty}$-bound on the adjoint variable by exploiting the semi-concavity  of the numerical solution (see Proposition~\ref{bound_sigma}). 
By proving the first-order consistency estimates for the truncation errors, we derive an $L^1_w$-bound
on the mesh-parameter derivative of the numerical solution  
(Proposition~\ref{prop_regu}). 
Finally, a Cauchy-type argument based on this bound shows that the numerical solutions converge to the unique viscosity solution, with the claimed first-order rate.

A central part of the analysis is the derivation of two a priori estimates for the numerical solution: a gradient bound and a semi-concavity bound. These estimates are strongly coupled. 
In the Euclidean setting, analogous estimates for numerical approximations are available; see \cite{LT2001,Tadmor2}. In the present Wasserstein graph setting, however, establishing such bounds is more delicate because of the interaction between the graph structure and the simplex constraint on the probability space. 
To address these difficulties, we first derive evolution equations for the gradient and Hessian of 
the numerical solution via the linearized operator 
associated with the scheme.  We then exploit the edgewise decomposition of the Hamiltonian and combine it with the adjoint method to control these quantities. This allows us to establish the gradient and semi-concavity estimates simultaneously through a stopping-time bootstrap argument on  $[0,T]$ (see Section~\ref{sec:examples}).
Finally, we verify these estimates  
for two families of numerical Hamiltonians, including a Lax--Friedrichs 
type scheme and an Osher--Sethian type upwind scheme.   

The remainder of the paper is organized as follows. Section~\ref{sec_2} reviews the basic background on the Wasserstein space on graphs and the corresponding HJEs. Section~\ref{sec_4.1} introduces the finite difference operators, the semi-discrete scheme, and the weighted adjoint equation, and states the main convergence result. Section~\ref{sec_5} provides the proof of Theorem~\ref{thm1} by means of  the adjoint method. Section~\ref{sec:examples} specifies the numerical Hamiltonians and establishes the required gradient and semi-concavity estimates. 
 The appendix collects several auxiliary arguments. 

\section{Preliminaries}\label{sec_2}
In this section, we provide the preliminaries for the Wasserstein space of probability measures on a finite graph. We then 
specify the standing assumptions and collect key properties of the HJE in this setting.

\subsection{Notations}\label{notation}
Consider an undirected connected graph $G=(V,E,\omega)$ with no self-loops or multiple edges, where $V=\{1,\ldots,d\}$ denotes the 
set of vertices with $d \geq 2$ being the total number of vertices,   
and $E\subset V\times V$ is the set of edges.  Here, $\omega=(\omega_{i,j})_{1\le i,j\le d}$ is a $d\times d$ symmetric matrix with nonnegative entries, such that $\omega_{i,j}>0$  if $(i,j)\in E$, $\omega_{i,j}=0$ if $(i,j)\notin E$  and $\omega_{i,i}=0$ for all $i=1,\ldots,d.$ We denote the probability simplex by
\begin{align*}
\mathcal P(G)=\Big\{\xi=(\xi_1,\ldots,\xi_d)\in[0,1]^d:\sum_{i=1}^d\xi_i=1\Big\}.
\end{align*}
For a fixed $\epsilon\in(0,\frac1d),$ we define the truncated simplex $\mathcal P_{\epsilon}(G) :=\mathcal P(G)\cap [\epsilon,1)^d$. For notational simplicity, we will often write $\mathcal P_{\epsilon}:=\mathcal P_{\epsilon}(G)$. Let $\partial A$ and $A^{\circ}$ denote the boundary and interior of a Borel set $A$, respectively. In particular, $\mathcal P^{\circ}(G)$ denotes the interior of $\mathcal P(G)$ and $\mathcal P^{\circ}_{\epsilon}$  the interior of $\mathcal P_{\epsilon}$. Similarly, $\partial \mathcal P(G)=\mathcal P(G)\backslash \mathcal P^{\circ}(G)$ and $\partial\mathcal P_{\epsilon} =\mathcal P_{\epsilon} \backslash\mathcal P^{\circ}_{\epsilon} .$

 Let $\mathbb S^{d\times d}$ denote the set of $d\times d$ skew-symmetric matrices. We introduce a symmetric function $g:[0,\infty)^2 \to[0,\infty)$ such that $g(t,r)=g(r,t)$ for $t,r\in[0,\infty)$. For $\xi\in\mathcal P(G)$, we say that $\upsilon,\tilde\upsilon\in\mathbb S^{d\times d}$ are $\xi$-equivalent if $(\upsilon_{i,j}-\tilde\upsilon_{i,j})g_{i,j}(\xi)=0$ for all $(i,j)\in E,$ where $g_{i,j}(\xi):=g(\xi_i,\xi_j).$ This equivalence relation induces a quotient space on $\mathbb S^{d\times d}$ denoted by $\mathbb H_{\xi}$ \cite{MCC}.  Under conditions specified later (see Assumption \ref{assumption_g}), the function 
$g$ induces a metric tensor on $\mathcal P(G)$.
We endow $\mathbb H_{\xi}$ with the inner product and discrete norm:
\begin{align*}
(\upsilon,\tilde\upsilon)_{\xi}:=\frac12\sum_{(i,j)\in E}\upsilon_{i,j}\tilde\upsilon_{i,j}g_{i,j}(\xi),\quad \|\upsilon\|_{\xi}:=\sqrt{(\upsilon,\upsilon)_{\xi}},\quad \upsilon,\tilde\upsilon \in\mathbb S^{d\times d}.
\end{align*}The factor $\frac12$ compensates for the symmetry of the edge set: if $(i,j)\in E,$ then $(j,i)\in E$ as well.

For a mapping $\phi=(\phi_1,\ldots,\phi_d):V\to\mathbb R^d,$ we define its graph gradient by $\nabla_G\phi:=(\sqrt{\omega_{i,j}}(\phi_i-\phi_j))_{(i,j)\in E}.$ The adjoint of $\nabla_G$ with respect to the inner product $(\cdot,\cdot)_{\xi}$  is the divergence operator $-\mathrm{div}_{\xi}:\mathbb H_{\xi}\to\mathbb R^d,$ given by
\begin{align*}
\mathrm{div}_{\xi}(\upsilon):=\Big(\sum_{j=1}^d\sqrt{\omega_{i,j}}\upsilon_{j,i}g_{i,j}(\xi)\Big)_{i=1}^d,\quad \upsilon\in\mathbb S^{d\times d},
\end{align*}
such that the following integration-by-parts formula holds: $(\nabla_G\phi,\upsilon)_{\xi}=-(\phi,\mathrm{div}{\xi}(\upsilon)).$ Here, $(\upsilon,\tilde\upsilon):=\frac12 \sum_{(i,j)\in E}\upsilon_{i,j}\tilde\upsilon_{i,j}.$ We use $\|\cdot\|_{\infty}$ for the supremum norm and $\|\cdot\|_{l^2}$ for the Frobenius norm of a matrix.

For $\rho^0,\rho^1\in\mathcal P(G),$ the $L^2$-Monge--Kantorovich metric is defined as
\begin{align}\label{MK}
\mathcal W(\rho^0,\rho^1):=\Big(\inf_{(\sigma,\upsilon)}\Big\{\int_0^1(\upsilon,\upsilon)_{\sigma}\mathrm dt:\dot{\sigma}+\mathrm {div}_{\sigma}(\upsilon)=0,\;\sigma(0)=\rho^0,\sigma(1)=\rho^1\Big\}\Big)^{\frac12},
\end{align}
where the infimum is taken over pairs $(\sigma,\upsilon)$ such that $\sigma\in H^1(0,1;\mathcal P(G))$ and $\upsilon:[0,1]\to\mathbb S^{d\times d}$ is measurable. The probability simplex $\mathcal P(G)$, endowed with the metric $\mathcal W$, is  called the \textit{Wasserstein space on graphs}, and is denoted by $(\mathcal P(G),\mathcal W)$; see e.g. \cite{Liwuchen,MCC,Maas} for further background on this space.  
Throughout this paper, we assume the symmetric function $g$ satisfies the following conditions:  
\begin{assumption}\label{assumption_g}
\begin{itemize}
\item[(g-\romannumeral1)] $g$ is continuous on $[0,\infty)^2$ and is smooth on $(0,\infty)^2$;
\item[(g-\romannumeral2)] $t\wedge r\leq g(t,r)\leq t\vee r$ for any $t,r\in[0,\infty);$
\item[(g-\romannumeral3)] $g(\lambda t,\lambda r)=\lambda g(t,r)$ for any $\lambda,t,r\in[0,\infty);$
\item[(g-\romannumeral4)] $g$ is concave;   
\item[(g-\romannumeral5)] $\int_0^1\frac{1}{\sqrt{g(r,1-r)}}\mathrm dr<\infty.$
\end{itemize}
\end{assumption}
As shown in \cite[Proposition 3.7]{Liwuchen}, assumption (g-\romannumeral5) ensures that $\mathcal W(\rho^0,\rho^1)<\infty$ for any $\rho^0,\rho^1\in\mathcal P(G).$ 
Some common examples of symmetric function $g$ satisfying  (g-\romannumeral1)--(g-\romannumeral5) are given as follows. 
\begin{example}\label{ex_g}
(\romannumeral1) The average probability weight \cite{Chow1}: $g_1(t,r)=\frac{t+r}{2};$ 

(\romannumeral2) The logarithmic probability weight \cite{Chow2}: 
 $$g_2(t,r)=\frac{t-r}{\log t-\log r},\text{ if }t\neq r;\;g_2(t,r)=0,\text{ if }t=0\text{ or }r=0;\;g_2(t,r)=t,\text{ if }t=r;
$$ (\romannumeral3) The harmonic probability weight  \cite{Maas}: 
$$g_3(t,r)=0,\text{ if }t=0\text{ or }r=0;\;g_3(t,r)=\frac{2}{\frac1t+\frac1r},\text{ otherwise}.$$
\end{example}

Using the metric tensor $g$, one can define \textit{$\mathcal W$-differentiability} and the \textit{Wasserstein gradient} $\nabla_{\mathcal W}$ on $(\mathcal P(G),\mathcal W)$ \cite[Definition 3.9]{MCC}. For a function $f:\mathcal P(G)\to\mathbb R$ that is $\mathcal W$-differentiable at $\xi\in\mathcal P(G),$ we denote the Wasserstein gradient as $\nabla_{\mathcal W}f(\xi).$ Let \begin{align}\label{mathbbV}
\mathbb{V} := \{ \mathbf a \in \mathbb{R}^d : \|\mathbf a\|_{l^2}=1, \sum_{k=1}^d \mathbf a_k = 0 \}
\end{align} 
denote the set of unit tangent vectors to the probability simplex. 
For any $(i,j)\in E$ with $1\leq i<j\leq d,$ let $e_{i,j}\in\mathbb R^d$ be the vector with $1$ at the $i$th entry, $-1$ at the $j$th entry, and zeros elsewhere. For $f:\mathcal P(G)\to\mathbb R,$ we define
\begin{align*}
\nabla^{e_{i,j}}f(\xi):=\lim_{t\to0}\frac{f(\xi+te_{i,j})-f(\xi)}{t},\quad \xi\in\mathcal P^{\circ}(G).
\end{align*}
If the Fr\'echet derivative $\frac{\delta f}{\delta \xi}$ exists and is continuous at $\xi\in\mathcal P^{\circ}(G)$, then $\nabla_{\mathcal W}f(\xi)=\nabla_{G}\frac{\delta f}{\delta \xi}(\xi)\in\mathbb S^{d\times d},$ and the upper triangular entries of $\nabla_{\mathcal W}f(\xi)$ are given by $\sqrt{\omega_{i,j}}\nabla^{e_{i,j}}f(\xi)$ for $1\leq i<j\leq d.$ 

Throughout this paper, $C$ denotes a generic positive constant that may change from line to line. When needed, we write $C(a,b)$ or $C_{a,b}$ to emphasize dependence on parameters $a$ and $b.$ For a function $F\colon\mathbb{R}^d\to\mathbb{R}$, we denote the Euclidean gradient and Hessian matrix 
by $\nabla_{\xi}F$ and $\nabla^2_{\xi}F$, respectively.
Partial derivatives are denoted  by $\frac{\partial}{\partial{\xi_k}}F$ 
and $\frac{\partial^2}{\partial{\xi_k}\partial{\xi_i}}F$, and we sometimes abbreviate these as 
$\partial_{\xi_k} F$ and $\partial^2_{\xi_k\xi_i}F$ for $i,k=1,2,\ldots,d$.

\subsection{Hamilton--Jacobi equation on $(\mathcal P(G),\mathcal W)$}\label{eq_assp}
Consider the HJE on the Wasserstein space on graphs:
\begin{align}\label{HJeq}
\partial_tu(t,\xi)+\mathcal H(\xi,\nabla_{\mathcal W}u(t,\xi))+\mathcal F(\xi)=0,\quad u(0,\xi)=\mathcal U_0(\xi),
\end{align}
where $t\in(0,T)$ for some $T>0,$ $\xi\in\mathcal P^{\circ}(G),$ and the initial datum  $\mathcal U_0$ is continuous on $(\mathcal P(G),\mathcal W).$ In the following, we specify the assumptions on the Hamiltonian $\mathcal H.$

\begin{assumption}\label{ass_H}
Fix a constant $\kappa>1$ and assume there exist a constant $t_{*}>1$ and nonnegative functions $\gamma,\bar{\gamma}\in\mathcal C([0,\infty))$ such that for any $\xi,\eta\in\mathcal P^{\circ}(G)$ and $p\in\mathbb S^{d\times d},$ the following hold: 
\begin{itemize} 
\item[(H-\romannumeral1)] $\mathcal H\in\mathcal C(\mathcal P^{\circ}(G)\times \mathbb S^{d\times d})$ and $\mathcal H(\xi,\cdot)$ is convex. 
\item[(H-\romannumeral2)] $\lim_{t\to1^+}\bar{\gamma}(t)=1,\;\gamma(t)>1$ for $t\in(1,t_*),$ and $t\gamma(t)\mathcal H(\xi,p)\leq \mathcal H(\xi,tp)\leq \bar{\gamma}(t)\mathcal H(\xi,p)$ for all $t>0.$ 
\item[(H-\romannumeral3)] For every $\epsilon\in(0,1),$ there exists $\theta_{\epsilon}>0$ such that $\theta_{\epsilon}\|p\|^{\kappa}_{\xi}\leq \mathcal H(\xi,p)$ for all $\xi\in\mathcal P_{\epsilon}.$ 
\item[(H-\romannumeral4)] Let $\mathcal H(\xi,0)=0.$ For fixed $\epsilon\in(0,1),$ there exist a modulus $\mathfrak m_{\epsilon}$ (where $\mathfrak m_{\epsilon}(r)\leq C({\epsilon})r$) and a constant $C_{\epsilon}>0$ such that 
\begin{align*}
\mathcal H(\xi,p)-\mathcal H(\eta,p)\ge -\mathfrak m_{\epsilon}(\|\xi-\eta\|_{l^2})\|p\|^{\kappa}_{\xi}-C_{\epsilon}\big|\|p\|_{\xi}-\|p\|_{\eta}\big|\big(\|p\|^{\kappa-1}_{\xi}+\|p\|^{\kappa-1}_{\eta}\big).
\end{align*}
\item[(H-\romannumeral5)] Denote $\mathcal I(\xi):=\sum_{i=1}^d\frac{1}{\xi_i}.$ There exists $C_H>0$ such that $|\mathcal H(\xi,p)|\leq C_H\|p\|^{\kappa}_{\xi}\mathcal I^{-\kappa}(\xi)$. 
\end{itemize}
\end{assumption}
We now present an example of a Hamiltonian satisfying (H-\romannumeral1)--(H-\romannumeral5).

\begin{example}\label{exam1} 
Let $\mathcal{H}(\xi,p):=\mathfrak{a}(\xi)\|p\|^{\kappa}_{\xi}$ for $\xi\in\mathcal{P}^{\circ}(G)$ and $p\in\mathbb{S}^{d\times d}$, where $\mathfrak{a}(\xi)=\mathcal{I}^{-\kappa}(\xi)$ with $\kappa>1$. In this case, the parameters in (H-iii) and (H-iv) are given by $\theta_{\epsilon}=C\epsilon^{\kappa}d^{-\kappa}$ and $C_{\epsilon}=C\kappa d^{-\kappa}$. Furthermore, the modulus {in (H-iv)} if of the form $\mathfrak{m}_{\epsilon}(r)=C_{\epsilon}r$ for $r\ge 0$, as a consequence of the $l^2$-Lipschitz continuity of $\mathcal{I}^{-\kappa}$.  We refer the reader to \cite[Example 5.1]{MCC} for more details.
\end{example}

Under Assumption \ref{ass_H}, the HJE \eqref{HJeq} is well posed in the viscosity sense and admits a unique bounded  Lipschitz continuous solution.

\begin{proposition}\label{prop_exact}
\cite[Proposition 6.4]{MCC} Let Assumptions \ref{assumption_g} and \ref{ass_H} hold. Suppose in addition that $\mathcal U_0$ and $\mathcal F$ are $l^2$-Lipschitz continuous. Then there exists a unique bounded continuous viscosity solution $u$ to \eqref{HJeq} on $[0,T)\times \mathcal P^{\circ}(G)$ such that:  
\begin{itemize}
\item[(\romannumeral1)] There exists $L_1>0$ such that $|u(t,\xi)-u(r,\xi)|\leq L_1|t-r|$ for all $\xi\in\mathcal P^{\circ}(G), t,r\in[0,T);$
\item[(\romannumeral2)] For every $\epsilon\in(0,\frac{1}{d}),$ there exists $L_2:=L_2(\epsilon)>0$ such that $|u(t,\xi)-u(t,\eta)|\leq L_2\|\xi-\eta\|_{l^2}$ for all $t\in[0,T),\xi,\eta\in\mathcal P_{\epsilon}.$
\end{itemize}
\end{proposition}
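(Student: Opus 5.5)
This proposition is quoted from \cite[Proposition 6.4]{MCC}, so within the paper the proof is simply that citation. To reconstruct it, I would use the standard viscosity-solution route adapted to the degenerate simplex: a comparison principle on $\mathcal P^{\circ}(G)$, Perron's method for existence, and then a priori Lipschitz estimates.

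The first step is comparison. Take a bounded subsolution $u$ and a bounded supersolution $v$ with $u(0,\cdot)\le v(0,\cdot)$. I would show $u\le v$ by doubling variables with the penalization
\[
\Phi(t,\xi,\eta)=u(t,\xi)-v(t,\eta)-\frac{\|\xi-\eta\|^2_{l^2}}{2\delta}-\beta t-\alpha\,\Psi(\xi),
\]
where $\Psi$ is a barrier blowing up at $\partial\mathcal P(G)$, for instance $\Psi(\xi)=\sum_i\log(1/\xi_i)$.

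The barrier forces maximizers into some $\mathcal P_\epsilon$. There (H-iii) gives coercivity, which bounds $\|p\|_\xi$ for the test gradients. Then (H-iv), with its linear modulus, controls the difference $\mathcal H(\xi,p)-\mathcal H(\eta,p)$.

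The key device is the homogeneity condition (H-ii). Following the usual trick for convex Hamiltonians, I would compare $\mu u$ with $v$ for $\mu<1$ close to $1$. Scaling the subsolution produces a strict gap proportional to $(\gamma(t)-1)\mathcal H$, and this gap absorbs both the error terms from (H-iv) and the barrier terms $\alpha\nabla\Psi$.

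The main obstacle is the barrier. Its contribution to the Hamiltonian must be shown to vanish as $\alpha\to0$. Since $\nabla\Psi$ behaves like $1/\xi_i$ near the boundary, I would use (H-v), the bound $|\mathcal H|\le C_H\|p\|^\kappa_\xi\mathcal I^{-\kappa}$: the degeneracy $\mathcal I^{-\kappa}$ damps the growth of $\nabla\Psi$, so $\alpha$ times the barrier terms stays small uniformly in $\epsilon$.

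Existence then follows from Perron's method. As sub- and supersolutions I would take $\mathcal U_0\mp Ct$, with $C$ large depending on $\mathcal F$. This also requires that $\mathcal U_0$ be handled by a smooth approximation, or else that $\mathcal H$ of its superdifferential be bounded, which (H-v) and the Lipschitz continuity of $\mathcal U_0$ provide.

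The time Lipschitz bound (i) comes from comparing $u(\cdot+s,\cdot)$ with $u\pm L_1 s$, using the initial barrier just constructed.

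The space Lipschitz bound (ii) on $\mathcal P_\epsilon$ follows from coercivity. Since $u_t$ is bounded, $\mathcal H(\xi,\nabla_{\mathcal W}u)$ is bounded in the viscosity sense. Then (H-iii) bounds $\|\nabla_{\mathcal W}u\|_\xi$ on $\mathcal P_\epsilon$, and $g\ge\epsilon$ there converts this into an $l^2$ Lipschitz constant $L_2(\epsilon)$.
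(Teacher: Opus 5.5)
You are right that the paper gives no argument of its own here and simply imports \cite[Proposition 6.4]{MCC}. Your reconstruction is the standard route for results of this type and is sound in outline: comparison by doubling variables with a logarithmic boundary barrier, where the $\mu$-scaling gap from (H-ii) absorbs the (H-iv) errors and (H-v) makes the barrier contribution $O(\alpha^\kappa)$ uniformly since $\|\nabla_{\mathcal W}\Psi\|_\xi\le C\,\mathcal I(\xi)$; then Perron's method with $\mathcal U_0\mp Ct$; time-Lipschitz by translation and comparison; and spatial Lipschitz on $\mathcal P_\epsilon$ from (H-iii) together with $g_{i,j}\ge\epsilon$. The points you leave implicit are routine: the convex splitting, which uses the upper bound $\bar\gamma$ in (H-ii), to separate $\alpha\nabla_{\mathcal W}\Psi$ from the doubling gradient; and the connectedness of $G$, needed to turn a bound on $\|\nabla_{\mathcal W}u\|_\xi$ into a bound on the tangential $l^2$ gradient.
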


\section{Main Results on the Numerical Scheme}\label{sec_4.1}
In this section, we introduce the finite difference operators, the discrete Hamiltonian function, and the weighted function spaces.  We then present the semi-discrete finite difference scheme together with  the associated   weighted adjoint equation. Finally, we state our main result on the first-order convergence of the numerical solution in the weighted 
 $L^1$
  space.

\subsection{Finite difference operators} 
For $\xi \in \mathcal{P}(G)$, we define the coordinate transformation 
$\Pi \colon \mathcal{P}(G) \to \widetilde{\mathcal{P}}(G)$ via the cumulative 
probability sums:
\[
  s^0 = 0, \quad s^k = \sum_{i=1}^k \xi_i \; (\text{for }k = 1,\ldots,d-1), 
  \quad s^d = \sum_{i=1}^d \xi_i = 1.
\]
 The image of this transformation is the Euclidean set 
\[
  \widetilde{\mathcal{P}}(G) 
  := \bigl\{ s = (s^1,\ldots,s^{d-1}) \in \mathbb{R}^{d-1} 
     : 0 = s^0 \leq s^1 \leq \cdots \leq s^{d-1} \leq s^d = 1 \bigr\}.
\] 
The inverse transformation recovers 
$\xi$ from $s$ is given by $\xi_k = s^k - s^{k-1}$ for 
$k = 1, \ldots, d$. For notational brevity, we write $\mathcal  P:=\mathcal P(G)$ and $\widetilde{\mathcal P} := \widetilde{\mathcal P}(G)$
 throughout this paper. 
Since $\Pi$ is affine with Jacobian equal 
to $1$, it is a measure-preserving bijection between $\mathcal P$ and $\widetilde{\mathcal P}$.  
Consequently, for any  function $f:\mathcal P\to\mathbb R$, integration with respect  to the $(d-1)$-dimensional Hausdorff measure 
$\mathscr{H}^{d-1}$ on $\mathcal{P}$  becomes as standard Lebesgue 
integrals over $\widetilde{\mathcal{P}}$: 
\begin{align}\label{hausdorff}
  \int_{\mathcal{P}} f(\xi)\,\mathrm{d}\mathscr H^{d-1}(\xi) 
  = \int_{\widetilde{\mathcal{P}}} f\bigl(\Pi^{-1}(s)\bigr)\,\mathrm{d}s, 
\end{align}  and all subsequent integrals over $\mathcal{P}$ are to be understood in this sense. Throughout the paper, we use $\xi \in \mathcal P$
to denote points in graph coordinates and and $x \in \widetilde{\mathcal P}$  to denote points in Euclidean coordinates.
Through the map $\Pi$, the function spaces on $\mathcal P$ and $\widetilde{\mathcal P}$ considered in this paper (e.g. $L^p(\mathcal {P}),\mathcal C(\mathcal P)$ and $L^p(\widetilde{\mathcal {P}}),\mathcal C(\widetilde{\mathcal P})$ with $p\in[1,+\infty]$) are naturally identified. We therefore use these spaces interchangeably, depending on the coordinate system being employed.

The transformation $\Pi$ 
connects Wasserstein directional derivatives on the graph
with standard partial derivatives in Euclidean coordinates; see \cite[Lemma 4.1]{CDM25}.

\begin{lemma}\label{lemma1}
Let $f\in\mathcal C^1(\mathcal P^{\circ}(G))$ and define $\tilde f(x)=f(\Pi^{-1}(x))$ for $x=\Pi(\xi)$, {$\xi \in \mathcal P^{\circ}(G)$}. Then the Wasserstein gradient $\nabla_{\mathcal W}f(\xi)$ can be expressed via the partial derivatives of $\tilde f$ at $x = \Pi(\xi)$ as follows:
\begin{align*}
\sqrt{\omega_{j,k}}\nabla^{e_{j,k}}f(\xi)=\sqrt{\omega_{j,k}}(\partial_{x_j}+\cdots+\partial_{x_{k-1}})\tilde f(\Pi(\xi)),\quad 1\leq j<k\leq d,\;\;\xi\in\mathcal P^{\circ}(G).
\end{align*}
\end{lemma}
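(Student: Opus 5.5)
The proof is a chain-rule computation through the affine map $\Pi$. Since $\Pi$ is affine, $f=\tilde f\circ\Pi$ on $\mathcal P^{\circ}(G)$. Because $f\in\mathcal C^1(\mathcal P^{\circ}(G))$, the function $\tilde f$ is $\mathcal C^1$ on the interior of $\widetilde{\mathcal P}$. Interior points of the simplex map to interior points of $\widetilde{\mathcal P}$, since all the inequalities $s^{k-1}<s^k$ are strict. The plan has two steps: compute the image of the direction $e_{j,k}$ under the linear part of $\Pi$, and then apply the chain rule to the difference quotient defining $\nabla^{e_{j,k}}f(\xi)$.

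\textbf{Step 1: the image of $e_{j,k}$.} Fix $1\le j<k\le d$ and $\xi\in\mathcal P^{\circ}(G)$. For $|t|$ small, $\xi+te_{j,k}\in\mathcal P^{\circ}(G)$, because the entries still sum to $1$ and remain positive. Compute $\Pi(\xi+te_{j,k})$ coordinatewise:
\[
s^m(\xi+te_{j,k})=\sum_{i\le m}\xi_i+t\bigl(\mathbf 1_{\{j\le m\}}-\mathbf 1_{\{k\le m\}}\bigr).
\]
The correction term equals $t$ exactly when $j\le m\le k-1$, and equals $0$ otherwise. Hence
\[
\Pi(\xi+te_{j,k})=\Pi(\xi)+t\sum_{m=j}^{k-1}\mathbf e_m,
\]
where $\mathbf e_m$ denotes the standard basis vectors of $\mathbb R^{d-1}$. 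This is well defined because $k-1\le d-1$.

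\textbf{Step 2: the chain rule.} Substituting into the difference quotient gives
\[
\nabla^{e_{j,k}}f(\xi)=\lim_{t\to0}\frac{\tilde f\bigl(\Pi(\xi)+t\sum_{m=j}^{k-1}\mathbf e_m\bigr)-\tilde f(\Pi(\xi))}{t}.
\]
Since $\tilde f$ is $\mathcal C^1$ near $\Pi(\xi)$, this limit is the directional derivative of $\tilde f$ along $\sum_{m=j}^{k-1}\mathbf e_m$, which equals $(\partial_{x_j}+\cdots+\partial_{x_{k-1}})\tilde f(\Pi(\xi))$. Multiplying both sides by $\sqrt{\omega_{j,k}}$ gives the stated identity.

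\textbf{Main point requiring care.} The only delicate step is making sense of $\mathcal C^1$ regularity, because $\mathcal P^{\circ}(G)$ is a relatively open subset of an affine hyperplane. I would handle this by interpreting $f\in\mathcal C^1(\mathcal P^{\circ}(G))$ as $\mathcal C^1$ along tangent directions in $\mathbb V$, which is exactly what is needed for the directional derivatives above. Equivalently, $\tilde f$ is $\mathcal C^1$ on the open set $\Pi(\mathcal P^{\circ}(G))\subset\mathbb R^{d-1}$, since $\Pi^{-1}$ is affine. After that, the argument is the index bookkeeping of Step 1.
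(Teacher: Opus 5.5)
Your proof is correct. It is the natural direct computation: the key step $\Pi(\xi+te_{j,k})=\Pi(\xi)+t\vec m_{j,k}$ is exactly the identity $\Pi^{-1}(x)\pm he_{j,k}=\Pi^{-1}(x\pm h\vec m_{j,k})$ the paper invokes right after this lemma, and the chain-rule step is justified because $\tilde f$ is $\mathcal C^1$ on the open set $\Pi(\mathcal P^{\circ}(G))\subset\mathbb R^{d-1}$. The paper itself gives no argument and only cites \cite[Lemma 4.1]{CDM25}, so there is nothing further to compare.
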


Let $h\in(0,\frac{1}{d})$ be the mesh size. 
Denote the interior area and the boundary layer of thickness $h$ along the direction $e_{j,k}$ with $(j,k)\in E$, respectively, as 
\begin{align}\label{interior}
\mathcal P_{e_{j,k}}^{\rm Int}:=\{\xi\in\mathcal P :\xi\pm he_{j,k}\in\mathcal P \},\text{ and }
  \mathcal{B}_{h,e_{j,k}} := \mathcal{P} \setminus \mathcal{P}_{e_{j,k}}^{\rm Int}.
  \end{align} 
  Define the forward and backward differences respectively as \begin{align}\label{D_notation}
D^{+}_{e_{j,k}} u^h(\xi):=u^h(\xi+he_{j,k})-u^h(\xi),\quad D^{-}_{e_{j,k}} u^h(\xi):=u^h(\xi)-u^h(\xi-he_{j,k})
\end{align}
when $\xi\in \mathcal P^{\rm Int}_{e_{j,k}}
.$ 
For grid points 
$\xi \in \mathcal{P}$ that lie on the boundary in the 
$e_{j,k}$ direction
(that is, those $\xi$ such that $\xi \pm h e_{j,k} \notin \mathcal{P}$, i.e., $\xi \in \mathcal B_{h,e_{j,k}}$), we employ 
the constant extrapolation. Concretely, we set $u^h(\xi \pm he_{j,k}) := u^h(\xi)$ which implies $D^\pm_{e_{j,k}} u^h(\xi) = 0$ at the outflow boundaries. 
To  approximate the Wasserstein gradient, we define $d\times d$ skew-symmetric difference matrices 
 on ${\mathcal P}$: 
\begin{align}\label{differ_matrix}
&[D^{\pm} u^h]=(0,b_{1,2},b_{1,3},\ldots,b_{1,d};-b_{1,2},0,b_{2,3},\ldots,b_{2,d};\ldots;-b_{1,d},\ldots,-b_{d-1,d},0)\notag\\
&\text{ with entries }
 b_{j,k}=\frac{\sqrt{\omega_{j,k}}}{h}D^{\pm}_{e_{j,k}} u^h \text{ for } 1\leq j<k\leq d.
 \end{align}  
 
 In order to distinguish coordinates between the two mesh spaces, we 
use $\tilde u(t,\cdot)$ with initial value 
$\widetilde{\mathcal{U}}_0 := \mathcal{U}_0\circ\Pi^{-1}$ to denote 
the exact viscosity solution on $\widetilde{\mathcal{P}}$, 
which satisfies $\tilde u(t,x) = u(t,\Pi^{-1}(x)) = u(t,\xi)$ 
with $x = \Pi(\xi)$. The numerical approximations of $u$ 
(resp.\ $\tilde u$) are denoted by $u^h$ (resp.\ $\tilde u^h$), 
respectively.
 For notational brevity, we write $\tilde u^h(x)$ and $u^h(\xi)$ 
when the time variable is not emphasized.

Finite difference operators in the coordinates of 
$\widetilde{\mathcal{P}}$ are defined as follows. For fixed indices 
$j,k$ with $1\leq j < k \leq d$, we define the multi-index 
$\vec{m}_{j,k} := (m_1,\ldots,m_{d-1})$ by setting $m_l = 1$ for 
$l \in \{j,\ldots,k-1\}$ and $m_l = 0$ otherwise.
By virtue of the identity 
$\Pi^{-1}(x) \pm he_{j,k} = \Pi^{-1}(x \pm h\vec{m}_{j,k})$, 
one has 
\begin{align}\label{def_DtildeU}
D^+_{e_{j,k}} u^h(x)&=\tilde u^h(x+h\vec m_{j,k})-\tilde u^h(x)=:D^+_{\vec m_{j,k}}\tilde u^h(x),\notag\\
D^-_{e_{j,k}} u^h(x)&=\tilde u^h(x)-\tilde u^h(x-h\vec m_{j,k} )=:D^-_{\vec m_{j,k} }\tilde u^h(x).
\end{align}
Then the equivalent difference matrices of \eqref{differ_matrix} in the coordinate of $\widetilde{\mathcal P}$ are denoted by $[D^\pm\tilde u^h],$ whose entries are given by $b_{j,k}=\frac{\sqrt{\omega_{j,k}}}{h}D^{\pm}_{\vec m_{j,k}}\tilde u^h$ for $1\leq j<k\leq d.$

\subsection{Discrete Hamiltonian function and weighted space}  Denote the skew-symmetric matrices \begin{align*}&P:=(0,p_{1,2},p_{1,3},\ldots,p_{1,d};-p_{1,2},0,p_{2,3},\ldots,p_{2,d};\ldots;-p_{1,d},\ldots,-p_{d-1,d},0),\\
 &Q:=(0,q_{1,2},q_{1,3},\ldots,q_{1,d};-q_{1,2},0,q_{2,3},\ldots,q_{2,d};\ldots;-q_{1,d},\ldots,-q_{d-1,d},0).
 \end{align*}  
We rewrite the Hamiltonian as $\mathcal H(\xi,P)=\mathcal H(\xi,p_{1,2},p_{1,3},\ldots,p_{d-1,d}):\mathcal P^{\circ}(G)\times \mathbb R^{(d^2-d)/2}\to\mathbb R.$
We define the discrete Hamiltonian $\mathcal G(\xi,P,Q)$ acting on the skew-symmetric matrices $P,Q$:  
 $$\mathcal G(\xi,P,Q)=\mathcal G(\xi,p_{1,2},q_{1,2};p_{1,3},q_{1,3};\ldots;p_{d-1,d},q_{d-1,d}):\mathcal P^{\circ}(G)\times \mathbb R^{d^2-d}\to\mathbb R.$$ 
When $\mathcal G$ is evaluated at the pair of difference matrices in \eqref{differ_matrix}, we write, for brevity, 
$\mathcal G(\xi,[D^\pm u^h]):=\mathcal G(\xi,[D^+u^h],[D^-u^h]).$
We impose the following assumption on $\mathcal G$.

\begin{assumption}\label{ass_H2} There exists $R_0>0$ such that 
\begin{itemize}
\item[(\romannumeral1)] (Monotonicity) For each $R\in(0, R_0]$, $\mathcal G$ is non-increasing in $p_{k,l}$ and non-decreasing in $q_{k,l},1\leq k<l\leq d$, whenever  $\|P\|_{\infty}\vee \|Q\|_{\infty}\leq R.$
\item[(\romannumeral2)] (Consistency) $\mathcal G(\xi,P,P)=\mathcal H(\xi,P),\;\xi\in\mathcal P^{\circ}(G).$
\item[(\romannumeral3)] (Local Lipschitz property) For  each  $R\in (0,R_0],$ when $\|P\|_{l^2}\vee\|Q\|_{l^2}\vee\|\bar P\|_{l^2}\vee\|\bar Q\|_{l^2}\leq R$, we have \begin{align}\label{locally}|\mathcal G(\xi,P,Q)-\mathcal G(\xi,\bar P,\bar Q)|\leq C_{R}(\|P-\bar P\|_{l^2}+\|Q-\bar Q\|_{l^2}),\quad \xi\in \mathcal P^{\circ}(G)\end{align}for some $C_{R}>0$ independent of $\xi$.
\end{itemize}
\end{assumption} 

For the Hamiltonian $\mathcal{H}$ given by Example~\ref{exam1}, 
two important classes of numerical Hamiltonians satisfying Assumption
~\ref{ass_H2} are the Lax--Friedrichs 
 Hamiltonian and the Osher--Sethian Hamiltonian; see  Section \ref{exam:LF} for 
details.

To handle the geometric discrepancy between the domain $\mathcal{P}(G)$ 
and the shifted stencils $\xi \pm he_{i,j}$, which may fall outside  
$\mathcal{P}(G)$, we formulate the analysis  in the weighted Lebesgue space 
$L^p_w:=L^p_w(\widetilde{\mathcal{P}})$ for $1\le p \le 2$, equipped with the norm
\[
  \|v\|_{L^p_w} := \left(\int_{\widetilde{\mathcal{P}}} 
  |v(x)|^p\, w(x)\,\mathrm{d}x\right)^{1/p}.
\]
The weight function 
$w$ captures the boundary degeneracy of the Wasserstein simplex and plays a central role in the weighted discrete integration-by-parts (IBP)  formula established below. For $p \in [1,+\infty]$, let $L^p(\widetilde{\mathcal{P}})$ denote the 
Lebesgue space of $p$-integrable functions on $\widetilde{\mathcal{P}}$ 
with respect to the Lebesgue measure, with the convention that $L^\infty(\widetilde{\mathcal P})$ consists 
of essentially bounded measurable functions.

\begin{definition}\label{def_truncation}
A function $w: {\mathcal{P}} \to [0, \infty)$ is called an admissible weight function on the Wasserstein space on a graph if it satisfies:
\begin{enumerate}
\item[(i)] $w \in \mathcal{C}^2({\mathcal{P}})$ and $w(\xi) > 0$ for all $\xi \in {\mathcal{P}}^\circ$;
\item[(ii)] $w(\xi) = 0$ for all $\xi \in \partial {\mathcal{P}}$. 
\end{enumerate}
\end{definition}

The following examples satisfy Definition~\ref{def_truncation} and provide 
typical constructions of admissible  weight functions on $\mathcal{P}$.

\begin{example}
\begin{enumerate}
\item[(i)] Polynomial weight function:  
$
w(\xi) = \prod_{i=1}^d \xi_i ^\alpha, \; \alpha \geq 1, \; \xi \in \mathcal{P}^{\circ}.
$

\item[(ii)] Exponential weight function: $
w(\xi) = \exp( -\frac{\lambda}{\prod_{i=1}^d \xi_i } ), \; \lambda > 0,\;\xi \in \mathcal{P}^{\circ}.
$

\item[(iii)] Smooth mollifier weight function: $
w(\xi) = \prod_{i=1}^d \psi(\xi_i ),\xi \in \mathcal{P}^{\circ},$ where $\psi(s) = e^{-1/s}$ for $s>0$ and $\psi(s) = 0$ for $s \leq 0$.
\end{enumerate} 
\end{example}
For these examples, one has $\| w\|_{L^{\infty}(\widetilde{\mathcal P})}\vee \|\nabla w\|_{L^{\infty}(\widetilde{\mathcal P})}<\infty.$

 \subsection{Semi-discrete finite difference scheme and weighted adjoint equation}
 For each $h>0$, we introduce the following semi-discrete finite difference equation
on the domain $\mathcal{P}$:
\begin{align}
\label{spatial_eq}
\begin{cases}
\partial_t u^h(t,\xi)+\mathcal G(\xi, [D^\pm u^h])+\mathcal F(\xi)=0,
\quad t>0,\;\xi\in\mathcal P^{\circ}, \\
u^h(0,\xi)=\mathcal U_0(\xi),
\end{cases}
\end{align} where the difference matrices $[D^{\pm}u^h]$ are given in \eqref{differ_matrix}. 
For each $h>0$, $\xi_0\in\mathcal{P}^\circ$, and $T>0$, we define the adjoint 
variable $\sigma(t,\xi):=\sigma^{h,\xi_0,T}(t,\xi)$ associated with the  
scheme \eqref{spatial_eq} 
 as the solution to the following equation:
\begin{align}\label{adjoint_exact}
\begin{cases}
\partial_s \sigma(s,\xi)
+ \dfrac{1}{w(\xi)}
  \displaystyle\sum_{(i,j) \in E} \dfrac{\sqrt{\omega_{i,j}}}{h}
  \Big(
    D^-_{e_{i,j}} \big[ \sigma(s,\xi) \mathcal{A}_{i,j}(\xi) w(\xi) \big]
    + D^+_{e_{i,j}} \big[ \sigma(s,\xi) \mathcal{B}_{i,j}(\xi) w(\xi) \big]
  \Big) = 0, \\
\sigma(T, \xi) = \dfrac{\delta_{\xi_0}(\xi)}{w(\xi)},\quad \xi\in\mathcal P^{\circ},
\end{cases}
\end{align}
where $\mathcal{A}_{i,j}(\xi) = \partial_{p_{i,j}} \mathcal{G}(\xi, [D^\pm u^h]),$ 
$\mathcal{B}_{i,j}(\xi) = \partial_{q_{i,j}} \mathcal{G}(\xi, [D^\pm u^h])$, and $\delta_{\xi_0}$ denotes the Dirac delta measure concentrated at 
$\xi_0 \in \mathcal{P}^\circ$, so that the terminal condition 
$\sigma(T,\xi) = \delta_{\xi_0}(\xi)/w(\xi)$ is understood in the distributional sense, 
with $\int_{\widetilde{\mathcal{P}}} \sigma(T,x) w(x)\,\mathrm{d}x = 1$. 
To make the structure of \eqref{adjoint_exact} more transparent, we expand the
discrete product rule identities
\[
  D^-_{e_{i,j}} \big( \sigma(s,\xi) \mathcal{A}_{i,j}(\xi) w(\xi) \big)
  = w(\xi)\, D^-_{e_{i,j}} \big( \sigma(s,\xi) \mathcal{A}_{i,j}(\xi) \big)
    + \sigma(s,\xi-he_{i,j})\, \mathcal{A}_{i,j}(\xi-he_{i,j})\, D^-_{e_{i,j}} w(\xi),
\]
\[
  D^+_{e_{i,j}} \big( \sigma(s,\xi) \mathcal{B}_{i,j}(\xi) w(\xi) \big)
  = w(\xi)\, D^+_{e_{i,j}} \big( \sigma(s,\xi) \mathcal{B}_{i,j}(\xi) \big)
    + \sigma(s,\xi+he_{i,j})\, \mathcal{B}_{i,j}(\xi+he_{i,j})\, D^+_{e_{i,j}} w(\xi),
\]
and substitute these expressions into \eqref{adjoint_exact}. Dividing through by $w(x)$, we arrive at the following equivalent decomposed form: 
\begin{align}\label{adjoint1}\partial_t &\sigma^{h,\xi_0,T} + \sum_{(i,j)\in E} \frac{\sqrt{\omega_{i,j}}}{h} \Big( D^-_{e_{i,j}} \big(\sigma^{h,\xi_0,T} \partial_{p_{i,j}}\mathcal G\big) + D^+_{e_{i,j}} \big(\sigma^{h,\xi_0,T} \partial_{q_{i,j}}\mathcal G\big) \Big) \notag\\&+ {\mathcal{S}(\sigma^{h,\xi_0,T}, \mathcal{G},w)} = 0,\quad \text{\big(written as }\partial_t\sigma + \mathcal{L}_h^*\sigma = 0\big)
\end{align}
subject to the terminal condition $\sigma^{h,\xi_0,T}(T,\xi) =\frac{ \delta_{\xi_0}(\xi)} { w(\xi)}$, where $\mathcal{G}$ is evaluated at $(\xi,[D^\pm u^h])$.
 The summation term in \eqref{adjoint1} represents the discrete divergence, 
which is the formal adjoint of the upwind discretization of $\mathcal{G}$. When $w \equiv 1$, as in a flat domain with Dirichlet or periodic boundary conditions, this term coincides with the full adjoint operator and 
$\mathcal{S}$ vanishes. The term $\mathcal{S}$, which we call the \emph{geometric 
drift}, is a correction that arises from the non-uniformity of the 
weight $w$ and the Wasserstein simplex. It is given by 
\begin{align}\label{drift_term}
&\quad\mathcal{S}(\sigma^{h,\xi_0,T} ,\mathcal{G},w)\notag
\\
& := \sum_{(i,j)\in E} \frac{\sqrt{\omega_{i,j}}}{h}
   \Big(
     \frac{D^+_{e_{i,j}} w}{w}
     \big(\sigma^{h,\xi_0,T} \partial_{p_{i,j}}\mathcal{G}\big)(\xi+he_{i,j})
     + \frac{D^-_{e_{i,j}} w}{w}
     \big(\sigma^{h,\xi_0,T} \partial_{q_{i,j}}\mathcal{G}\big)(\xi-he_{i,j})
   \Big). 
\end{align} 

 Here, to handle  the shifted grid points $\xi \pm h e_{i,j}$ lying outside $\mathcal{P}$, we extend
the Hamiltonian $\mathcal{G}$ to the complement $\mathcal{P}^{c}$ by setting
\begin{align}\label{zero_G}
  \mathcal{G}(\xi) = 0,
  \quad \forall\, \xi \notin \mathcal{P}.
\end{align}
This extension is both natural and compatible with the Wasserstein graph structure. Indeed, the Hamiltonian 
$H$ is degenerate at the boundary, reflecting the fact that no probability flux crosses
 the boundary. As we show below, this extension, together with the vanishing of the weight
function $w$ on $\partial\mathcal{P}$, guarantees that all boundary
terms in the discrete  integration-by-parts (IBP) formula vanish.

\begin{lemma}\label{remark_IBP}
 For any edge $(i,j)\in E$, test function $\varphi \in \mathcal{C}(\widetilde{\mathcal{P}})$, 
and fluxes $\mathcal{A} = \sigma \partial_{p_{i,j}}\mathcal{G}$,
$\mathcal{B}= \sigma \partial_{q_{i,j}}\mathcal{G}$ with $\sigma \in \mathcal{C}(\widetilde{\mathcal{P}})$, the following weighted
IBP identity holds: 
\begin{align}\label{IBP_identity}
&\int_{\widetilde{\mathcal{P}}} \varphi(x)
  \bigl( D^-_{e_{i,j}} \mathcal{A}(x) + D^+_{e_{i,j}} \mathcal{B}(x) \bigr)
  w(x)\,\mathrm{d}x \notag \\
= \,&- \int_{\widetilde{\mathcal{P}}}
  \bigl( \mathcal{A}(x)\, D^+_{e_{i,j}} \varphi(x)
       + \mathcal{B}(x)\, D^-_{e_{i,j}} \varphi(x) \bigr)
  w(x)\,\mathrm{d}x \notag\\
&- \int_{\widetilde{\mathcal{P}}}
  \mathcal{A}(x)\, \varphi(x+h\vec{m}_{i,j})\, D^+_{e_{i,j}} w(x)\,\mathrm{d}x \notag \\
&- \int_{\widetilde{\mathcal{P}}}
  \mathcal{B}(x)\, \varphi(x-h\vec{m}_{i,j})\, D^-_{e_{i,j}} w(x)\,\mathrm{d}x.
\end{align}
\end{lemma}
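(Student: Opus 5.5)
The proof is a discrete summation by parts, done in the Euclidean coordinates $x\in\widetilde{\mathcal P}$, where the shift $\xi\mapsto\xi\pm he_{i,j}$ becomes the translation $x\mapsto x\pm h\vec m_{i,j}$ with unit Jacobian. I will treat the $\mathcal A$-term and the $\mathcal B$-term separately; they are mirror images.

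For the $\mathcal A$-term, write $D^-_{e_{i,j}}\mathcal A(x)=\mathcal A(x)-\mathcal A(x-h\vec m_{i,j})$. Here $\mathcal A$, i.e. $\partial_{p_{i,j}}\mathcal G$, is extended by zero outside $\mathcal P$ as in \eqref{zero_G}, and $\varphi,w$ are extended by continuity or by zero; since $w$ vanishes on $\partial\mathcal P$, zero extension keeps $w$ continuous. In the integral $\int \varphi(x)\mathcal A(x-h\vec m_{i,j})w(x)\,\mathrm dx$, substitute $y=x-h\vec m_{i,j}$. This gives $\int \varphi(y+h\vec m_{i,j})\mathcal A(y)w(y+h\vec m_{i,j})\,\mathrm dy$ over the shifted domain $\widetilde{\mathcal P}-h\vec m_{i,j}$. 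The integrand vanishes wherever $y\notin\widetilde{\mathcal P}$ (because $\mathcal A=0$ there) or wherever $y+h\vec m_{i,j}\notin\widetilde{\mathcal P}$ (because the zero-extended $w$ vanishes there). So the domain can be replaced by $\widetilde{\mathcal P}$, with no boundary term.

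Next, split $w(y+h\vec m_{i,j})=w(y)+D^+_{e_{i,j}}w(y)$ and $\varphi(y+h\vec m_{i,j})w(y)=\varphi(y)w(y)+D^+_{e_{i,j}}\varphi(y)\,w(y)$. This gives
\[
\int\varphi\,D^-\mathcal A\,w=\int\varphi\mathcal A w-\int\mathcal A\,\varphi(\cdot+h\vec m_{i,j})w(\cdot+h\vec m_{i,j})=-\int\mathcal A\,D^+\varphi\,w-\int\mathcal A\,\varphi(\cdot+h\vec m_{i,j})\,D^+w .
\]
These are exactly the first and third terms on the right of \eqref{IBP_identity}.

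The $\mathcal B$-term is symmetric. Write $D^+\mathcal B(x)=\mathcal B(x+h\vec m_{i,j})-\mathcal B(x)$ and substitute $y=x+h\vec m_{i,j}$. Then use $w(y-h\vec m_{i,j})=w(y)-D^-w(y)$ and $\varphi(y-h\vec m_{i,j})=\varphi(y)-D^-\varphi(y)$. This produces $-\int\mathcal B\,D^-\varphi\,w-\int\mathcal B\,\varphi(\cdot-h\vec m_{i,j})D^-w$. Summing the two pieces gives \eqref{IBP_identity}.

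The main obstacle is justifying that the change of variables produces no boundary contribution near the layer $\mathcal B_{h,e_{i,j}}$. There the constant-extrapolation convention for $D^\pm$ competes with the zero extension of $\mathcal G$, and these must be reconciled. My choice is to interpret all shifted values of $\mathcal A,\mathcal B$ (and $w$) outside $\widetilde{\mathcal P}$ as zero. Then every integrand is supported in $\widetilde{\mathcal P}\cap(\widetilde{\mathcal P}\mp h\vec m_{i,j})$, and translation invariance of Lebesgue measure on $\mathbb R^{d-1}$ makes the substitution exact. I would state this convention explicitly and check that it is consistent with \eqref{zero_G} and with $w|_{\partial\mathcal P}=0$ from Definition~\ref{def_truncation}.
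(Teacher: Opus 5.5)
Your proposal is correct and follows the paper's proof essentially step for step. You translate by $y=x-h\vec m_{i,j}$ (resp.\ $y=x+h\vec m_{i,j}$), discard the mismatch between $\widetilde{\mathcal P}$ and its shift because $\partial_{p_{i,j}}\mathcal G$ vanishes outside $\mathcal P$ by \eqref{zero_G} and $w$ vanishes outside $\mathcal P$, and then apply the discrete product rule to $\varphi w$. Your explicit statement that shifted values of $\mathcal A,\mathcal B,w$ are read via zero extension rather than constant extrapolation is the convention the paper uses implicitly, so making it explicit clarifies the argument without changing it.
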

\begin{proof} 
We establish the identity for the term involving $D^-_{e_{i,j}}\mathcal{A}$; the
corresponding identity for $D^+_{e_{i,j}}\mathcal{B}$ follows by an
analogous argument. Using the change of variables $y = x - h\vec{m}_{i,j}$, we write
\begin{align*}
&\int_{\widetilde{\mathcal{P}}}
  \varphi(x)\,w(x)\,D^-_{e_{i,j}}\mathcal{A}(x)\,\mathrm{d}x
= \int_{\widetilde{\mathcal{P}}}
  \varphi(x)\,w(x)\,\mathcal{A}(x)\,\mathrm{d}x
- \int_{\widetilde{\mathcal{P}}-h\vec{m}_{i,j}}
  (\varphi w)(y+h\vec{m}_{i,j})\,\mathcal{A}(y)\,\mathrm{d}y \\
&= -\int_{\widetilde{\mathcal{P}}}
  \mathcal{A}(x)\,D^+_{e_{i,j}}(\varphi w)(x)\,\mathrm{d}x
+ \int_{\widetilde{\mathcal{P}}
        \setminus (\widetilde{\mathcal{P}}-h\vec{m}_{i,j})}
  \mathcal{A}(x)\,(\varphi w)(x+h\vec{m}_{i,j})\,\mathrm{d}x \\
&\quad
- \int_{(\widetilde{\mathcal{P}}-h\vec{m}_{i,j})
        \setminus \widetilde{\mathcal{P}}}
  \mathcal{A}(x)\,(\varphi w)(x+h\vec{m}_{i,j})\,\mathrm{d}x.
\end{align*}
Here, the last boundary integral is zero because
$\partial_{p_{i,j}}\mathcal{G}(\xi)=0$ for all 
$\xi\notin\mathcal{P}$ 
by the zero extension~\eqref{zero_G}. 
The preceding boundary integral is also  zero because
$w(\xi)= 0$ for $\xi\notin\mathcal{P}$ (see Definition~\ref{def_truncation}), 
and hence
\[
  \int_{\widetilde{\mathcal{P}}}
  \varphi(x)\,w(x)\,D^-_{e_{i,j}}\mathcal{A}(x)\,\mathrm{d}x
  = -\int_{\widetilde{\mathcal{P}}}
  \mathcal{A}(x)\,D^+_{e_{i,j}}(\varphi w)(x)\,\mathrm{d}x.
\]
Applying the discrete product rule $D^+_{e_{i,j}}(\varphi w) = w\,D^+_{e_{i,j}}\varphi 
+ \varphi (\cdot+h\vec{m}_{i,j})\,D^+_{e_{i,j}}w$ yields
\[
  \int_{\widetilde{\mathcal{P}}}
  \varphi(x)\,w(x)\,D^-_{e_{i,j}}\mathcal{A}(x)\,\mathrm{d}x
  = -\int_{\widetilde{\mathcal{P}}}
  \mathcal{A}(x)\,D^+_{e_{i,j}}\varphi(x)\,w(x)\,\mathrm{d}x
  - \int_{\widetilde{\mathcal{P}}}
  \mathcal{A}(x)\,\varphi(x+h\vec{m}_{i,j})\,D^+_{e_{i,j}}w(x)\,\mathrm{d}x.
\]
Combining this with the analogous identity for the $D^+_{e_{i,j}}\mathcal{B}$
term completes the proof.
\end{proof}

The identity \eqref{IBP_identity} is the cornerstone of our convergence
analysis. It allows discrete derivatives to be transferred from the numerical flux to the test function $\varphi$, and it plays a central role in establishing the $L^1$
convergence rate.

\subsection{Main results}
Let the Hamiltonian $\mathcal{H}$ be defined as a sum of local contributions 
associated with each edge, reflecting the fact that the evolution of the 
probability measure $\xi$ is driven by local mass transfer between adjacent 
vertices. Specifically, we assume that the continuous Hamiltonian 
$\mathcal{H} \colon \mathcal{P}\times \mathbb{R}^d \to \mathbb{R}$ 
admits the edge-wise decomposition
\begin{equation}\label{eq:H_sum}
  \mathcal{H}(\xi, P) 
  = \sum_{(i,j) \in E} \mathcal{H}_{i,j}(\xi, P),
  \quad \xi \in \mathcal{P},\; P \in \mathbb{S}^{d\times d},
\end{equation}
where $\mathcal{H}_{i,j} \colon \mathcal{P}_\epsilon(G) \times 
\mathbb{S}^{d\times d} \to \mathbb{R}$ is the local Hamiltonian associated 
with the edge $(i,j)$. As a concrete example, in the setting of 
Example~\ref{exam1} with $\kappa = 2$, the local Hamiltonian takes the form 
$\mathcal{H}_{i,j}(\xi, P) = \frac{1}{2}\mathcal{I}^{-2}(\xi)\,
g_{i,j}(\xi)\,p^2_{i,j}$ for each $(i,j) \in E$. We next impose regularity and structural assumptions on the numerical Hamiltonian.

\begin{assumption}\label{ass_G}  Suppose that the discrete Hamiltonian $\mathcal{G}(\xi, P, Q)$ is twice 
continuously differentiable in $\xi \in \mathcal{P}$ for each fixed 
$(P, Q) \in \mathbb{R}^{\frac{d^2-d}{2}} \times 
\mathbb{R}^{\frac{d^2-d}{2}}$, and twice differentiable in 
$(P, Q)$ for almost every 
$(P, Q) \in \mathbb{R}^{\frac{d^2-d}{2}} \times 
\mathbb{R}^{\frac{d^2-d}{2}}$, for each fixed $\xi \in \mathcal{P}$.  
Let $\mathcal G$  
take the form of 
\begin{align}\label{H_special1}
  \mathcal{G}(\xi, P, Q) 
  = \sum_{(i,j)\in E} \mathcal{G}_{i,j}\big(\xi,P,Q\big),
  \quad \xi \in \mathcal{P}(G).
\end{align}  
Fix $R_0>0$ as in Assumption ~\ref{ass_H2}, and impose the following conditions.

\begin{enumerate}
\item[(i)] (Growth conditions) For each $R\in(0,R_0]$, there exists $C:=C(R)>0$ such that for all $\xi \in \mathcal{P}$ and $P,Q$ with $\|P\|_{\infty}\vee \|Q\|_{\infty} \leq R$,\begin{align}
 &\sup_{(i,j)\in E}\sup_{\xi\in\mathcal P}|\partial_{p_{i,j}}\mathcal G_{i,j}|\leq C,\label{growth2}\\
 &\sup_{i=1,\ldots,d}\sup_{(k,l)\in E}\Big(\Big|\frac{\partial^2\mathcal{G}}{\partial\xi_i\partial p_{k,l}}\Big| + \Big|\frac{\partial^2\mathcal{G}}{\partial\xi_i\partial q_{k,l}}\Big|\Big) 
+ \sup_{i,j=1,\ldots,d} \Big|\frac{\partial^2\mathcal{G}}{\partial\xi_i\partial\xi_j}\Big| \leq C\label{growth3}.\end{align}
\item[(ii)] (Bounded non-negative second derivatives)
For each $R\in(0,R_0]$, there exists $C:=C(R)>0$ such that for all edge $(i,j) \in E$, 
 $\xi \in \mathcal{P}$ and $P,Q$ with $\|P\|_{\infty}\vee \|Q\|_{\infty} \leq R$, 
\begin{align}\label{Gbounded}
0\leq \partial^2_{p_{i,j}p_{i,j}}\mathcal{G},\,   \partial^2_{p_{i,j}q_{i,j}}\mathcal{G},\,\partial^2_{q_{i,j}q_{i,j}}\mathcal{G}\leq Cg_{i,j}(\xi).
\end{align} 
\end{enumerate}
\end{assumption}

Under the above structural assumptions on the numerical Hamiltonian, 
we now address the well-posedness and regularity of the semi-discrete scheme \eqref{spatial_eq}, stated in the following proposition. The proof is postponed to Appendix~\ref{app1}.
Define the nested sublevel sets: for $(i,j)\in E,$
\begin{align}\label{nested}
  \mathcal{P}_{h,e_{i,j}}
  &:= \bigl\{\xi\in\mathcal{P}:
       \xi\pm he_{i,j}\in\mathcal{P}\bigr\},\notag\\
  \mathcal{P}_{2h,e_{i,j}}
  &:= \bigl\{\xi\in\mathcal{P}:
       \xi\pm he_{i,j}\in\mathcal{P}_h\bigr\},
\end{align} and similarly for $\mathcal P_{3h,e_{i,j}}.$ Clearly, $\mathcal{P}_{2h,e_{i,j}}\subset\mathcal{P}_{h,e_{i,j}}\subset\mathcal{P}$. Morover, 
the boundaries $\partial\mathcal{P}_{h,e_{i,j}}$, $\partial\mathcal{P}_{2h,e_{i,j}}$
are piecewise-linear hypersurfaces of measure zero in the interior  
$\mathcal{P}^\circ$.
Define \begin{align*}
  P\mathcal C^0_{e_{i,j}}
  := \Bigl\{f:\mathcal{P}\to\mathbb{R}\;\Big|\;&
     f\in\mathcal{C}(\mathcal{P}_{2h,e_{i,j}}),\;
     f\in\mathcal{C}(\mathcal{P}_{h,e_{i,j}}\setminus\mathcal{P}_{2h,e_{i,j}}),\;
     f\in\mathcal{C}(\mathcal{P}\setminus\mathcal{P}_{h,e_{i,j}}),\\
     &
     \text{with only finite jumps across }
     \partial\mathcal{P}_{h,e_{i,j}}\cup\partial\mathcal{P}_{2h,e_{i,j}}
  \Bigr\}.
\end{align*}

\begin{proposition}\label{prop_regularity}
Let Assumptions~\ref{assumption_g}-\ref{ass_G} hold,
let $h \in (0, \frac1d)$, and let $\mathcal{U}_0, \mathcal{F} \in
\mathcal{C}^2(\mathcal{P})$. Then there exists a unique bounded solution
$u^h$ to \eqref{spatial_eq}. 
Moreover, $u^h\in\mathcal{C}\bigl((0,T)\times\mathcal{P}\bigr)$, $\nabla^{e_{i,j}}u^h,e_{i,j}^{\top}\nabla^2u^he_{i,j}\in P\mathcal C^0_{e_{i,j}}$ for $(i,j)\in E$, 
 and
\begin{align*}
  \nabla^{e_{i,j}} u^h 
  &\in \mathcal C\bigl((0,T)\times\mathcal{P}_{h,e_{i,j}}\bigr)
     \cap \mathcal C\bigl((0,T)\times(\mathcal{P}\setminus\mathcal{P}_{h,e_{i,j}})\bigr),\\
  e^{\top}_{i,j}\nabla^2 u^h e_{i,j}
  &\in \mathcal C\bigl((0,T)\times\mathcal{P}_{2h,e_{i,j}}\bigr)
     \cap \mathcal C\bigl((0,T)\times(\mathcal{P}_{h,e_{i,j}}\setminus\mathcal{P}_{2h,e_{i,j}})\bigr)
     \cap \mathcal C\bigl((0,T)\times(\mathcal{P}\setminus\mathcal{P}_{h,e_{i,j}})\bigr).
\end{align*}
\end{proposition}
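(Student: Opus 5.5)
The plan is to read \eqref{spatial_eq} as an ODE in the Banach space $X := L^\infty(\mathcal P)$ of bounded functions on the whole simplex, with $\xi$ entering only as a parameter. Write $\partial_t u^h = \Phi(u^h) := -\mathcal G(\xi,[D^\pm u^h]) - \mathcal F(\xi)$. The difference operators $v\mapsto [D^\pm v]$ are bounded linear maps $X\to X$ with norm $\lesssim 2\max\sqrt{\omega_{i,j}}/h$. For a fixed mesh, the boundary layers $\mathcal B_{h,e_{i,j}}$ carry the constant extrapolation and $\mathcal G$ is zero-extended outside $\mathcal P$. By Assumption~\ref{ass_H2}(iii), $\Phi$ is then Lipschitz on the ball $\{\|v\|_X\le M\}$, provided $M$ is chosen so that $2M\sqrt{\omega}/h\le R_0$. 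Picard--Lindel\"of gives a unique local solution.

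To extend it to $[0,T]$ I would derive an a priori sup bound and an a priori bound on the discrete differences, which keep $[D^\pm u^h]$ inside the monotone region. The comparison principle comes from monotonicity (Assumption~\ref{ass_H2}(i)). If $u,v$ are two solutions and $\xi^*$ (approximately) maximizes $u-v$, then $[D^+ (u-v)]\le 0$ and $[D^-(u-v)]\ge0$ entrywise there. The mean value theorem on $\mathcal G$ then gives $\tfrac{d}{dt}\sup(u-v)\le 0$. This yields uniqueness, and comparison with $\pm(\|\mathcal U_0\|_\infty + t\|\mathcal F\|_\infty + t|\mathcal G(\cdot,0,0)|)$ gives the sup bound.

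Translation invariance of the scheme in the interior gives the difference bound. The function $u^h(t,\xi+h e_{k,l})$ solves the same equation up to the $\xi$-dependence of $\mathcal G$ and $\mathcal F$, which by \eqref{growth3} and the $\mathcal C^2$ data contributes an $O(h)$ forcing. Comparison then gives $\|D^\pm u^h\|_\infty\le C h$ on $[0,T]$, where the constant depends only on $\|\nabla\mathcal U_0\|_\infty$, $\|\nabla\mathcal F\|_\infty$ and $T$. Hence $\|[D^\pm u^h]\|_\infty$ stays bounded by a constant independent of $h$, which I assume is $\le R_0$ (as in the paper's framework). This gives a global solution $u^h\in \mathcal C([0,T];X)$.

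Regularity in $\xi$ is where the piecewise structure enters, and I expect it to be the main obstacle. Fix an edge $(i,j)$. The map $\xi\mapsto u^h(t,\xi)$ couples $\xi$ only with the lattice points $\xi+h\mathbb Z e_{k,l}$. Along $e_{i,j}$, the stencil changes type exactly when $\xi\pm he_{i,j}$ leaves $\mathcal P$, which is the boundary $\partial\mathcal P_{h,e_{i,j}}$. I would differentiate the ODE formally in the direction $e_{i,j}$. This gives a linear ODE for $v=\nabla^{e_{i,j}}u^h$, of the form $\partial_t v + \sum(\mathcal A\, D^+ v + \mathcal B\, D^-v)\sqrt\omega/h + \partial_{\xi}\mathcal G\cdot e_{i,j} + \nabla^{e_{i,j}}\mathcal F = 0$, which is well posed in $X$ by the same Picard argument; by standard differentiability of ODE flows with respect to parameters, $v$ is the actual derivative wherever the stencil type is locally constant in $\xi$. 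On each of the regions $\mathcal P_{h,e_{i,j}}$ and $\mathcal P\setminus \mathcal P_{h,e_{i,j}}$ the coefficients are continuous, so $v$ is continuous there. Across the interface, $v$ has only a finite jump, bounded by the global $X$-bound. Differentiating once more, the coefficients of the equation for $e_{i,j}^\top\nabla^2u^h e_{i,j}$ involve $v(\xi\pm he_{i,j})$. These are themselves discontinuous where $\xi\pm he_{i,j}$ crosses $\partial\mathcal P_{h,e_{i,j}}$, that is, on $\partial\mathcal P_{2h,e_{i,j}}$, and this produces the three-region continuity. The second derivatives of $\mathcal G$ in $(P,Q)$ exist only almost everywhere. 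For this step I would first mollify $\mathcal G$ in $(P,Q)$, use the uniform bounds \eqref{Gbounded} and \eqref{growth3} to get uniform estimates, and pass to the limit. That yields membership in $P\mathcal C^0_{e_{i,j}}$, with continuity in $t$ on $(0,T)$ inherited from the ODE.
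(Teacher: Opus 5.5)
Your architecture is the paper's. You solve a Banach-space ODE by Picard--Lindel\"of and continue it with a maximum-principle bound. You then linearize formally to get a candidate derivative, prove continuity region by region with finite jumps, and differentiate once more so that $\partial\mathcal P_{h,e_{i,j}}$ reappears shifted as $\partial\mathcal P_{2h,e_{i,j}}$. Working with bounded rather than continuous functions is actually the more accurate choice.

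\textbf{The regularity step has a genuine gap, and the paper's appendix proof shares it.} You claim the coefficients are continuous on $\mathcal P_{h,e_{i,j}}$ and on $\mathcal P\setminus\mathcal P_{h,e_{i,j}}$, hence $v$ is. But $\partial_p\mathcal G(\xi,[D^\pm u^h](\xi))$ involves $u^h(t,\xi\pm he_{k,l})$, and $u^h(t,\cdot)$ is itself discontinuous. That is exactly why $\Phi$ does not preserve $\mathcal C(\mathcal P)$ under \eqref{eq:const_ext}. Moreover, the equations for $u^h$ and $v$ contain $D^\pm u^h$ and $D^\pm v$. So the shifting mechanism you invoke only at the Hessian level already acts on $u^h$ and $v$: a jump across $\{\xi_j=h\}$ enters $D^+_{e_{i,j}}$ at $\xi$ when $\xi+he_{i,j}$ crosses it, i.e.\ on $\{\xi_j=2h\}$, then on $\{\xi_j=3h\}$, and so on.

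Concretely, take $d=2$, $x=\xi_1$, $h<\tfrac14$, $\mathcal U_0(\xi)=\xi_1$, and $\mathcal G^{LF}$ with $\gamma_{1,2}>\sqrt{\omega_{1,2}}$. Near $x=h$ one has $p\equiv\sqrt{\omega_{1,2}}$, while $q$ switches from $0$ to $\sqrt{\omega_{1,2}}$. Hence $\partial_tu^h(0,\cdot)$ jumps at $x=h$ by a nonzero multiple of $\mathcal I^{-2}g_{1,2}\bigl(\tfrac{\omega_{1,2}}{2}+\gamma_{1,2}\sqrt{\omega_{1,2}}\bigr)$. Next, $\partial_t^2u^h(0,x)$ contains the term $-\partial_q\mathcal G^{LF}\,\tfrac{\sqrt{\omega_{1,2}}}{h}\bigl(\partial_tu^h(0,x)-\partial_tu^h(0,x-h)\bigr)$ with $\partial_q\mathcal G^{LF}>0$. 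This term jumps at $x=2h\in(h,1-h)$, while all other terms are continuous there. Expanding $u^h(t)=\mathcal U_0+t\,\partial_tu^h(0)+\tfrac{t^2}{2}\partial_t^2u^h(0)+O(t^3)$ in sup norm, $u^h(t,\cdot)$ is discontinuous at $x=h$ and at $x=2h$ for small $t>0$. This contradicts $u^h\in\mathcal C((0,T)\times\mathcal P)$, a claim your proposal never addresses. It also rules out the existence of $\nabla^{e_{1,2}}u^h$ on all of $\mathcal P_{h,e_{1,2}}$.

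The paper reaches its conclusion only by declaring $\Gamma_h$ a self-map of $\mathcal C(\mathcal P)$ in Step~1 of the appendix proof. That contradicts its own Step~3, item~(2). Your lattice-orbit remark is the right tool. For fixed $\xi$, the values on $(\xi+h\Lambda)\cap\mathcal P$, with $\Lambda=\{n\in\mathbb Z^d:\sum_kn_k=0\}$, solve a finite ODE system. Its structure is locally constant off $\bigcup_k\{\xi_k\in h\mathbb Z\}$. This gives piecewise regularity on the cells of that hyperplane arrangement, which is what can actually be proved, rather than the three-region statement.

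\textbf{The translation-comparison bound also does not hold as argued.} The function $u^h(t,\cdot+he_{k,l})$ satisfies the scheme with the extrapolation pattern of the shifted domain $\mathcal P-he_{k,l}$. So in an $O(h)$-layer the two schemes differ by zeroing an $O(1)$ entry of $[D^\pm]$. The assumptions bound the resulting perturbation of $\mathcal G$ only by $O(1)$ via \eqref{growth2}, not by $O(h)$. In addition, comparison on $\mathcal P\cap(\mathcal P-he_{k,l})$ comes with no boundary data.

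Even in the interior, \eqref{growth3} together with $\partial_\xi\mathcal G(\xi,0,0)=0$ controls $\partial_\xi\mathcal G$ only on the $R_0$-ball. The estimate is therefore circular and closes only for short times, which is why you had to assume the constant is $\le R_0$. An $h$-uniform bound $\|D^\pm u^h\|_\infty\le Ch$ is exactly the gradient estimate for which Proposition~\ref{prop_regu2} needs a bootstrap and specific Hamiltonians.

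You are right that some control of $[D^\pm u^h]$ is needed to stay where Assumption~\ref{ass_H2}(i),(iii) apply; the paper's Steps~1--2 silently use monotonicity and Lipschitz bounds outside that ball. But this step does not supply that control. A minor point: the Picard ball should be centred at $\mathcal U_0$, not at $0$.
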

\begin{remark}
The $\mathcal{C}^2$-regularity of $\mathcal{U}_0$ and $\mathcal{F}$ 
in Proposition~\ref{prop_regularity} in fact can be weakened, without changing the conclusion, to 
\begin{align*}
  \mathcal{U}_0,\,\mathcal{F}\in \bigl\{f\in W^{2,\infty}(\mathcal{P}) \,:\, 
  \nabla^{e_{i,j}}f,\, e^{\top}_{i,j}\nabla^2 f\, e_{i,j} 
  \in P\mathcal{C}^0_{e_{i,j}} \text{ for all } (i,j)\in E\bigr\},
\end{align*} 
as is clear from the proof of Proposition~\ref{prop_regularity}. Here $W^{2,\infty}(\mathcal P)$ denotes the usual Sobolev space with index $(2,\infty)$ on $\mathcal P.$ We nevertheless retain the assumption $\mathcal{U}_0,\mathcal{F}\in 
\mathcal{C}^2(\mathcal{P})$ for the following reasons. We want to emphasize that the piecewise nature of $\nabla_{\xi} u^h$ and $\nabla^2_{\xi} u^h$ at the artificial 
interfaces $\partial\mathcal{P}_{h,e_{i,j}}$ and 
$\partial\mathcal{P}_{2h,e_{i,j}}$ originates from the 
extrapolation convention \eqref{eq:const_ext}, rather than by any lack of regularity of 
$\mathcal{U}_0$ or $\mathcal F$. Moreover, the
assumption $\mathcal{U}_0,\,\mathcal{F}\in\mathcal{C}^2(\mathcal{P})$  is more transparent and avoids introducing  $h$-dependent interfaces into the formulation of the original HJE \eqref{HJeq}.
\end{remark}

We are now in a position to state the main convergence result. We obtain a 
first-order error estimate for the proposed scheme,  under the assumption that the numerical solution satisfies both the gradient bound and the semi-concavity bound in Assumption~\ref{ass:semi-concave}. 
Our analysis is carried out in the weighted space $L^1_w=L^1_w(\widetilde{\mathcal{P}})$. 
 The proof of Theorem \ref{thm1} is presented in Section~\ref{sec_5}.

\begin{assumption}\label{ass:semi-concave}
There exists  constants $h_0\in(0,\frac{1}{d})$ and $C>0$ such that, for any $h\in (0,h_0),$
\begin{enumerate}
 \item[\textup{(i)}] $\displaystyle
    \sup_{t\in[0,T],\xi\in\mathcal{P}}
    \max_{(i,j)\in E}|\nabla^{e_{i,j}}u^h(t,\xi)| \leq C;$ 
  \item[\textup{(ii)}] $\displaystyle
    \sup_{t\in[0,T]}\sup_{\mathbf{a}\in\mathbb{V}}
    \sup_{\xi\in\mathcal{P}}
    \mathbf{a}^\top\nabla^2 u^h(t,\xi)\,\mathbf{a} \leq C,$ where $\mathbb V$ is the set of unit tangent vectors in  \eqref{mathbbV}.  
    \end{enumerate}
\end{assumption}

\begin{theorem}\label{thm1}
Let Assumptions~\ref{assumption_g}-\ref{ass:semi-concave} hold, and let $\mathcal{U}_0, \mathcal{F} 
\in \mathcal{C}^2(\mathcal{P})$. For any $T>0,$ there exists a constant 
$C := C(\|\mathcal U_0\|_{\mathcal C^2(\mathcal P)},\|\mathcal F\|_{\mathcal C^2(\mathcal P)},T,G,g) > 0$, such that for all $h \in (0,h_0)$ with $h_0$ given in Assumption \ref{ass:semi-concave},
\begin{align*}
  \int_{\widetilde{\mathcal{P}}}
  \bigl|\tilde{u}^h(T,x) - \tilde{u}(T,x)\bigr|\,w(x)\,\mathrm{d}x 
  \leq C h.
\end{align*}
\end{theorem}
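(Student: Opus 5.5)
The proof follows the adjoint route outlined in the introduction. We differentiate the scheme in the mesh parameter, control the derivative $\partial_h u^h$ in $L^1_w$ through the weighted adjoint $\sigma^{h,\xi_0,T}$, and then sum over dyadic meshes to obtain a Cauchy-type estimate.

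\textbf{Step 1: linearization.} Set $v:=\partial_h \tilde u^h$. This derivative exists by Proposition~\ref{prop_regularity} and smooth dependence on $h$. Differentiating \eqref{spatial_eq} in $h$ gives
\begin{align*}
\partial_t v+\sum_{(i,j)\in E}\frac{\sqrt{\omega_{i,j}}}{h}\Big(\mathcal A_{i,j}D^+_{e_{i,j}}v+\mathcal B_{i,j}D^-_{e_{i,j}}v\Big)=\mathcal R_h,\qquad v(0)=0 .
\end{align*}
The source $\mathcal R_h$ comes from differentiating $h^{-1}D^\pm_{e_{i,j}}\tilde u^h$ in $h$. On $\mathcal P^{\rm Int}_{e_{i,j}}$ it equals
\begin{align*}
\mathcal A_{i,j}\,h^{-2}\big(h\nabla^{e_{i,j}}u^h(\xi+he_{i,j})-D^+_{e_{i,j}}u^h\big)
\end{align*}
plus the analogous $\mathcal B_{i,j}$ term. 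By Taylor's formula this is $\mathcal A_{i,j}$ times an average of $e_{i,j}^\top\nabla^2u^h e_{i,j}$, up to sign. On the boundary layers $\mathcal B_{h,e_{i,j}}$ the extrapolation produces extra terms of size $\mathcal O(1/h)$, supported on sets of measure $\mathcal O(h)$.

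\textbf{Step 2: duality.} Multiply the $v$-equation by $\sigma w$, where $\sigma=\sigma^{h,\xi_0,T}$ solves \eqref{adjoint1}, and integrate over $[0,T]\times\widetilde{\mathcal P}$. Lemma~\ref{remark_IBP} moves all differences onto $\sigma$, and the geometric drift $\mathcal S$ absorbs the $D^\pm w$ terms exactly. This yields
\begin{align*}
v(T,\xi_0)=\int_0^T\!\!\int_{\widetilde{\mathcal P}}\mathcal R_h\,\sigma\,w\,dx\,dt.
\end{align*}
We then use Proposition~\ref{prop_sigma}: $\sigma\ge0$ and $\int\sigma w=1$. We also use Proposition~\ref{bound_sigma}, which bounds $\sigma$ uniformly in $L^\infty$ for $t<T$ away from the terminal time via the semi-concavity in Assumption~\ref{ass:semi-concave}(ii), together with \eqref{Gbounded}.

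\textbf{Step 3: the key one-sided bound.} Since $\mathcal A_{i,j},\mathcal B_{i,j}$ have a definite sign by monotonicity and the Hessian term is bounded above by Assumption~\ref{ass:semi-concave}(ii), the interior part of $\mathcal R_h$ is bounded on one side by a constant $C$, with no $h^{-1}$ factor. I expect the main obstacle here to be the two-sided $L^1_w$ control. To get it, I would split $\mathcal R_h=\mathcal R_h^+-\mathcal R_h^-$, and write $\mathcal R_h^-$ as an exact discrete divergence of bounded quantities. This works because $h\,e^\top\nabla^2u^he\approx D^+\nabla^{e}u^h$ and the gradient is bounded by Assumption~\ref{ass:semi-concave}(i). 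Integrating against $\sigma w$ and using Lemma~\ref{remark_IBP} again then gives a contribution bounded by $C\int|\sigma|w$ plus terms with $\nabla w$, all uniformly bounded. This is the analogue of the Lin--Tadmor argument, and it gives $\int_{\widetilde{\mathcal P}}|v(T,x)|w(x)\,dx\le C$ (Proposition~\ref{prop_regu}). For the boundary-layer terms, I would use $w=\mathcal O(h)$ on $\mathcal B_{h,e_{i,j}}$, which holds since $w\in\mathcal C^2$ vanishes on $\partial\mathcal P$. This cancels the $1/h$ factor and leaves an $\mathcal O(1)$ contribution. I would integrate $\xi_0$ against $w(\xi_0)\operatorname{sgn}v(T,\xi_0)$ rather than work pointwise, so that only the weighted mass of $\sigma$ is needed.

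\textbf{Step 4: conclusion.} Steps 2 and 3 give $\|\partial_h\tilde u^h(T)\|_{L^1_w}\le C$. Hence $\|\tilde u^h(T)-\tilde u^{h'}(T)\|_{L^1_w}\le C|h-h'|$, so $\{\tilde u^h\}$ is Cauchy in $L^1_w$ as $h\to0$. By the convergence result of \cite{CDM25}, $u^h\to u$ locally uniformly on $\mathcal P^\circ$, at rate $h^{1/2}$ on each $\mathcal P_\epsilon$, and the limit is identified with the viscosity solution of Proposition~\ref{prop_exact}. Letting $h'\to0$ and using dominated convergence, justified by the uniform Lipschitz bounds and $w$ bounded, gives $\|\tilde u^h(T)-\tilde u(T)\|_{L^1_w}\le Ch$. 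The constant depends only on the $\mathcal C^2$ norms of the data, $T$, $G$ and $g$ through the growth constants $C(R)$, with $R$ fixed by the gradient bound.
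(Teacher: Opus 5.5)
Your Steps 1, 2 and 4 follow the paper's architecture: differentiate in $h$ as in Lemma~\ref{lemma_R+}, dualize with the weighted adjoint, integrate $\partial_h u^h$ in $h$, and identify the limit via \cite{CDM25}. Step 3, however, has a genuine gap, and it is the core of the proof.

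Writing the negative part of the source as a discrete divergence and integrating by parts against $\sigma w$ with Lemma~\ref{remark_IBP} moves the quotient $h^{-1}D^{\pm}_{e_{i,j}}$ onto $\mathcal A_{i,j}\sigma w$ (resp.\ $\mathcal B_{i,j}\sigma w$). An $h$-independent bound would then need $L^1$ control of $h^{-1}D^{\mp}_{e_{i,j}}(\mathcal A_{i,j}\sigma w)$, i.e.\ a uniform BV estimate on the adjoint, which is not available. Without it you get $Ch^{-1}\int\sigma w$, not $C\int|\sigma|w$.

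Your claim that ``only the weighted mass of $\sigma$ is needed'' also fails. The facts $\sigma\ge0$, $\int\sigma w=1$ and the one-sided bound on the source give only a one-sided bound on $v$. On the layers, the factor $w=\mathcal O(h)$ turns $h^{-1}\int_{\mathrm{layer}}\sigma w\,dx$ into $\int_{\mathrm{layer}}\sigma\,dx$, which the weighted mass does not control.

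You also misquote Proposition~\ref{bound_sigma}. It bounds $w\sigma^{h,\nu,T}$ for bounded terminal data $\nu$. For the Dirac adjoint no such bound can hold, since $\sigma^{h,\xi_0,T}(t)w$ remains a finite combination of point masses on $\xi_0+h\,\mathrm{span}_{\mathbb Z}\{e_{i,j}\}$.

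The missing idea is to test the divergence structure against the constant $1$ rather than against $\sigma w$:
\begin{itemize}
\item Using $|a|=2a^+-a$ with the one-sided bounds of Proposition~\ref{prop_regu}(ii)--(iii), the paper reduces $\int_{\widetilde{\mathcal P}_{2h,e_{i,j}}}|\mathcal R^{\pm}_{i,j}|\,dx$ to $Ch$ plus unweighted integrals of $\nabla^2_{e_{i,j}}u^h$ over shifted domains.
\item Gauss--Green turns these into boundary integrals of the bounded gradient, and the layer $\widetilde{\mathcal P}\setminus\widetilde{\mathcal P}_{2h,e_{i,j}}$ contributes $\mathcal O(h)$ by its measure. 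This is Proposition~\ref{lemma5.8}: $\|\mathcal R^{\pm}_{i,j}(t)\|_{L^1}\le Ch$.
\item This is paired with the $h$-uniform bound $\|w\sigma^{h,\nu,T}\|_{L^\infty}\le C\|\nu\|_{L^\infty}$ of Proposition~\ref{bound_sigma}. There, semi-concavity and \eqref{Gbounded} enter through an upper bound on the zeroth-order coefficient $h^{-1}(D^-_{e_{i,j}}\mathcal A_{i,j}+D^+_{e_{i,j}}\mathcal B_{i,j})$ in the equation for $w\sigma$, followed by a maximum-principle/Gronwall argument.
\item Together with $|\partial_{p_{i,j}}\mathcal G|\le C$, this gives $\int|\partial_hu^h(T)|w\le h^{-1}\cdot C\cdot Ch=C$.
\end{itemize}
Your superposition over $\xi_0$ with weight $w(\xi_0)\operatorname{sgn}v(T,\xi_0)$ produces exactly $\sigma^{h,\nu,T}$ with $\nu=\operatorname{sgn}v(T)$. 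With these two estimates in place, the rest of your argument goes through as in the paper.
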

\begin{remark}
(i)~The first-order rate in Theorem~\ref{thm1} is sharp for this class of monotone schemes. Improving to higher order would require additional regularity assumptions on Hamiltonians and the use of high-order reconstructions such as ENO and WENO \cite{WENO}. We leave this for future work.  

(ii)~Theorem~\ref{thm1} concerns  only the spatial semi-discrete error.  
For fully discrete schemes with time step $\Delta t = \mathcal O(h)$ satisfying a CFL-type condition \cite{CDM25}, one expects the temporal discretization error to be of the same order.
\end{remark}

\section{$L^1$-convergence Analysis via Adjoint Method}\label{sec_5}
In this section, we prove Theorem~\ref{thm1} using the weighted adjoint 
method. The key idea is to represent the error $\tilde{u}^h - \tilde{u}$ as 
a duality pairing with the weighted adjoint variable $\sigma^{h,\xi_0,T}$, 
whose properties are established in Section \ref{subsec:4.1}.

\subsection{Properties of the weighted adjoint  variable}\label{subsec:4.1}
We begin by presenting the properties of the adjoint variable 
$\sigma^{h,\xi_0,T}$ defined in \eqref{adjoint1}. To this end, we introduce 
the formal linearized operator $L^h_t$ associated with the semi-discrete 
scheme \eqref{spatial_eq}.  For any function $\varphi:[0,T]\times\mathcal P\to\mathbb R$ that is differentiable in $t$ and bounded in $\xi$, set 
\begin{align}\label{linearized}
 L^h_t \varphi 
  := \partial_t \varphi 
  + \sum_{(i,j)\in E}\frac{\sqrt{\omega_{i,j}}}{h}
    \Bigl(
      \partial_{p_{i,j}}\mathcal{G}(\xi,[D^{\pm}u^h])\,D^+_{e_{i,j}} \varphi
      + \partial_{q_{i,j}}\mathcal{G}(\xi,[D^{\pm}u^h])\,D^-_{e_{i,j}} \varphi
    \Bigr)
  =: \partial_t \varphi + \mathcal{L}_h \varphi.
\end{align} 
The spatial operator $\mathcal L_h$ is a finite-difference operator with bounded coefficients (by Assumption~\ref{ass_G}(i) and the a~priori gradient bound on $u^h$ in Assumption \ref{ass:semi-concave}), so it is a bounded linear operator on $L^p(\widetilde{\mathcal P})$ for $p\in[1,+\infty]$ with norm at most $C/h$ for each fixed $h\in(0,h_0)$.
The operator $\mathcal{L}_h^*$, defined through  \eqref{adjoint1}, is the formal 
weighted adjoint of $\mathcal{L}_h$; 
indeed, the weighted IBP identity \eqref{IBP_identity} yields that 
\begin{align}\label{IBP2}
  \int_{\widetilde{\mathcal{P}}} (\mathcal{L}_h \varphi)\,\sigma\,w\,\mathrm{d}x 
  = -\int_{\widetilde{\mathcal{P}}} \varphi(\mathcal{L}_h^*\sigma)\,w\,\mathrm{d}x.
\end{align}
We emphasize that derivatives of the weight $w$ do not appear explicitly in \eqref{IBP2}, 
since the corresponding terms are absorbed into the geometric drift term 
$\mathcal S$ in the definition of $\mathcal L^*_h$.  
Let $\mathcal M(\widetilde{\mathcal P})$ denote the space of finite signed Radon measures on $\widetilde{\mathcal P}$, equipped with the total variation norm \begin{align}\label{TV_norm}
\|\mu\|_{\mathcal M} := \sup_{\phi}\Big\{\int_{\widetilde{\mathcal P}}\phi\,\mathrm d\mu : \phi\in\mathcal C(\widetilde{\mathcal P}),\ \|\phi\|_{L^\infty(\widetilde{\mathcal P})}\leq 1\Big\}.
\end{align}

\begin{proposition}\label{prop_sigma}
Let the conditions of Theorem~\ref{thm1} hold, $h\in(0,h_0)$, $\xi_0\in\mathcal P^\circ$, and $T>0$. Equation \eqref{adjoint1} admits a unique solution $\sigma^{h,\xi_0,T}$ in the following sense: the product $\rho(t,\cdot):=\sigma^{h,\xi_0,T}(t,\cdot)w$ defines a family of non-negative finite signed Radon measures $\rho\in\mathcal C^1([0,T];\mathcal M(\widetilde{\mathcal P}))$ with $\rho(T,\cdot)=\delta_{\Pi(\xi_0)}$, satisfying the weak formulation
\begin{align}\label{rho_weak}
\frac{\mathrm d}{\mathrm dt}\int_{\widetilde{\mathcal P}}\phi(x)\,\rho(t,x)\mathrm dx = \int_{\widetilde{\mathcal P}}(\mathcal L_h\phi)(x)\,\rho(t,x)\mathrm dx\quad\forall\phi\in\mathcal C(\widetilde{\mathcal P}),\ t\in[0,T).
\end{align} Moreover, for each $t\in[0,T]$, the weighted mass is conserved $$\int_{\widetilde{\mathcal P}}\sigma^{h,\xi_0,T}(t,x)w(x)\,\mathrm dx = 1.$$
\end{proposition}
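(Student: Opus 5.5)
Our plan is to work with $\rho:=\sigma w$ rather than $\sigma$. Multiplying \eqref{adjoint_exact} by $w$ shows that, formally, $\rho$ solves the backward linear equation
\[
\partial_t\rho+\sum_{(i,j)\in E}\frac{\sqrt{\omega_{i,j}}}{h}\Big(D^-_{e_{i,j}}(\rho\,\mathcal A_{i,j})+D^+_{e_{i,j}}(\rho\,\mathcal B_{i,j})\Big)=0,\qquad \rho(T)=\delta_{\Pi(\xi_0)}.
\]
Here $\mathcal A_{i,j},\mathcal B_{i,j}$ are extended by zero outside $\mathcal P$ as in \eqref{zero_G}. By the unweighted version of Lemma~\ref{remark_IBP} (take $w\equiv1$ inside $\widetilde{\mathcal P}$; the boundary terms vanish because of the zero extension), the right-hand side is exactly the Banach-space adjoint of $\mathcal L_h$ acting on measures. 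We therefore define $\rho$ by duality. As noted after \eqref{linearized}, $\mathcal L_h$ is bounded on $\mathcal C(\widetilde{\mathcal P})$, more precisely on bounded Borel functions, with norm $\le C/h$ by Assumption~\ref{ass_G}(i) and Assumption~\ref{ass:semi-concave}(i). Its coefficients are time dependent but bounded and measurable in $t$, because $u^h$ is continuous in time by Proposition~\ref{prop_regularity}.

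First we solve the backward problem $\partial_t\phi+\mathcal L_h\phi=0$ on $[t,T]$ with $\phi(T)=\phi_T$. Since $\mathcal L_h(t)$ is bounded, Picard iteration gives a unique solution operator $\Phi(t,T)$ on the space of bounded Borel functions. It is $\mathcal C^1$ in $t$ in operator norm wherever the coefficients are continuous, and it is uniformly Lipschitz in any case.

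We then set $\rho(t):=\Phi(t,T)^*\delta_{\Pi(\xi_0)}$, that is, $\int\phi\,\rho(t)\,\mathrm dx:=(\Phi(t,T)\phi)(\Pi(\xi_0))$. Differentiating in $t$ gives \eqref{rho_weak}, and we obtain $\rho\in\mathcal C^1([0,T];\mathcal M)$ from the operator-norm regularity of $\Phi$. For uniqueness, take two solutions and let $\mu$ be their difference. For fixed $t_0$ and $\phi$, set $\psi(s):=\Phi(s,t_0)\phi$ and differentiate $\int\psi(s)\,d\mu(s)$. The two generator terms cancel, so the integral is constant on $[t_0,T]$, and it vanishes at $s=T$. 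Hence $\mu(t_0)=0$.

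Mass conservation follows by taking $\phi\equiv1$ in \eqref{rho_weak}. Then $D^\pm_{e_{i,j}}1=0$, including on the boundary layer because of the constant-extrapolation convention, so $\mathcal L_h1=0$ and $\int\rho(t)=\int\rho(T)=1$. This is exactly $\int\sigma w=1$.

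Non-negativity of $\rho$ is equivalent to positivity of $\Phi(t,T)$, which we get from a discrete maximum principle. By monotonicity (Assumption~\ref{ass_H2}(i)) we have $\partial_{p_{i,j}}\mathcal G\le0$ and $\partial_{q_{i,j}}\mathcal G\ge0$. Consequently
\[
\mathcal L_h\phi(x)=\sum_y c(x,y)\big(\phi(y)-\phi(x)\big),\qquad c\ge0,
\]
with neighbours $y=x\pm h\vec m_{i,j}$, which is the generator of a pure-jump Markov process. Writing $\mathcal L_h=K-\lambda I$ with $K\ge0$ and $\lambda\le C/h$, the solution operator is a Dyson series in $K$ multiplied by $e^{\lambda(T-t)}$. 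Hence $\phi_T\ge0$ implies $\Phi(t,T)\phi_T\ge0$, so $\rho(t)\ge0$. Monotonicity applies only when $\|[D^\pm u^h]\|_\infty\le R_0$; we secure this from Assumption~\ref{ass:semi-concave}(i), after possibly shrinking $h_0$ or noting $R_0$ may be taken at least the gradient bound.

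The main obstacle is regularity and membership of the function spaces. The coefficients are only piecewise continuous in $\xi$, since $\nabla^{e_{i,j}}u^h\in P\mathcal C^0$, so $\mathcal L_h$ need not map $\mathcal C(\widetilde{\mathcal P})$ into itself. We handle this by working on bounded Borel functions, testing \eqref{rho_weak} against $\mathcal C$ functions at the end, and interpreting $\mathcal M$ as the dual of bounded Borel functions restricted to $\mathcal C$. If time continuity of the coefficients fails, we would settle for $\rho\in W^{1,\infty}$ in time rather than $\mathcal C^1$.
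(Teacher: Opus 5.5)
There is a genuine gap: the existence step runs the linearized equation in the wrong time direction, and the positivity argument rests on a sign error.

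\textbf{The weak formulation fails.} You set $\int\phi\,\rho(t)\,\mathrm dx:=(\Phi(t,T)\phi)(\Pi(\xi_0))$, where $\Phi(t,T)$ is the solution operator of the \emph{backward} problem $\partial_s\psi+\mathcal L_h(s)\psi=0$, $\psi(T)=\phi$. Then $\partial_t\Phi(t,T)=-\mathcal L_h(t)\Phi(t,T)$, so
\[
\frac{\mathrm d}{\mathrm dt}\int\phi\,\rho(t)\,\mathrm dx=-\bigl(\mathcal L_h(t)\Phi(t,T)\phi\bigr)(\Pi(\xi_0)).
\]
By \eqref{rho_weak} you need $\int(\mathcal L_h(t)\phi)\rho(t)\,\mathrm dx=(\Phi(t,T)\mathcal L_h(t)\phi)(\Pi(\xi_0))$. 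Even for autonomous coefficients you get exactly its negative.

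Your own uniqueness argument exposes the inconsistency. Constancy of $s\mapsto\int\Phi(s,t_0)\phi\,\mathrm d\rho(s)$ forces $\int\phi\,\rho(t_0)\,\mathrm dx=(\Phi(T,t_0)\phi)(\Pi(\xi_0))$. That is the \emph{forward} propagator from $t_0$ to $T$, the inverse of the operator you used.

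The correct definition, which is the paper's, is $\int f\rho(t_0)\,\mathrm dx:=\varphi^{h,f,t_0}(T,\xi_0)$. Here $\varphi^{h,f,t_0}$ solves $\partial_t\varphi+\mathcal L_h\varphi=0$ forward on $(t_0,T]$ with $\varphi(t_0)=f$. The identity $\partial_{t_0}\Phi(T,t_0)=\Phi(T,t_0)\mathcal L_h(t_0)$ then yields \eqref{rho_weak}.

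\textbf{Positivity fails.} Your generator formula has the wrong sign. The coefficient $\partial_{p_{i,j}}\mathcal G\le 0$ multiplies $\phi(x+h\vec m_{i,j})-\phi(x)$, and $\partial_{q_{i,j}}\mathcal G\ge0$ multiplies $\phi(x)-\phi(x-h\vec m_{i,j})$. Hence $\mathcal L_h\phi(x)=-\sum_y c(x,y)(\phi(y)-\phi(x))$ with $c\ge0$; it is $-\mathcal L_h$ that generates the jump process.

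Your backward problem, written in $s=T-t$, is therefore the anti-diffusive evolution $\partial_s\phi=\mathcal L_h\phi$, which does not preserve positivity. On two states with rate $c$, the datum $(1,0)$ becomes $\bigl(\tfrac12(1+e^{2cs}),\tfrac12(1-e^{2cs})\bigr)$. The $\rho$ you construct would thus be a signed measure, and the Dyson-series step fails as written.

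Positivity holds for the \emph{forward} evolution $\partial_t\varphi=-\mathcal L_h\varphi=(K-\lambda)\varphi$ with $K\ge0$. Together with $\mathcal L_h1=0$, it gives the contraction bound $\|\Phi(T,t_0)\|\le1$ used in the Riesz step. Your Dyson series, or the paper's discrete maximum principle, applies there.

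With this correction your argument becomes essentially the paper's. Your uniqueness-by-duality and mass-conservation steps already coincide with it. Your remarks on time-dependent and only piecewise continuous coefficients are legitimate: the paper writes $e^{-(t-t_0)\mathcal L_h}$ as if $\mathcal L_h$ were autonomous and asserts that $\mathcal L_h$ preserves $\mathcal C(\mathcal P)$. Working with a propagator on bounded Borel functions, as you propose, handles both points.
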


\begin{proof}
We first prove the existence of
$\rho$ via the forward linearized equation and the Riesz representation 
theorem, then prove its uniqueness via a duality argument, and finally verify the mass conservation property. 

\textit{Existence of $\rho$ via forward duality.}
We introduce the 
forward linearized equation that is  dual to \eqref{adjoint1}.  For $f\in\mathcal C(\mathcal P)$ and $t_0\in[0,T)$, consider \begin{align}\label{adjoint2}
\begin{cases}
\partial_t \varphi^{h,f,t_0}(t,\xi) 
+ \displaystyle\sum_{(i,j)\in E}\frac{\sqrt{\omega_{i,j}}}{h}
  \Bigl(
    \partial_{p_{i,j}}\mathcal{G}(\xi,[D^{\pm}u^h])\,
    D^+_{e_{i,j}} \varphi^{h,f,t_0}(t,\xi) \\
\qquad\qquad\qquad\qquad\qquad
    + \partial_{q_{i,j}}\mathcal{G}(\xi,[D^{\pm}u^h])\,
    D^-_{e_{i,j}} \varphi^{h,f,t_0}(t,\xi)
  \Bigr) = 0, 
  \quad t > t_0,\; \xi \in \mathcal{P}^{\circ}, \\\varphi^{h,f,t_0}(t_0,\xi) = f(\xi), 
\end{cases}
\end{align} 
where values outside $\mathcal{P}$ are defined by  constant extrapolation. Equivalently, \eqref{adjoint2} can be written as  
\[
\partial_t\varphi^{h,f,t_0} + \mathcal L_h\varphi^{h,f,t_0} = 0,\quad t\in(t_0,T],\qquad \varphi^{h,f,t_0}(t_0,\cdot)=f.
\]
The operator $\mathcal L_h$ is bounded on the Banach space $\mathcal C({\mathcal P})$ (equipped with the supremum norm). Indeed, $\|D^\pm_{e_{i,j}}\varphi\|_{L^\infty}\leq 2\|\varphi\|_{L^\infty}$, and the coefficients $\partial_{p_{i,j}}\mathcal G,\partial_{q_{i,j}}\mathcal G$ are bounded by Assumption~\ref{ass_G}(i), together with the a~priori gradient bound on $u^h$, Therefore, $
\|\mathcal L_h\|_{\mathcal L(\mathcal C({P}),\mathcal C({P}))}\leq C/h
$
for some $C$ depending on $\|\partial_{p_{i,j}}\mathcal G\|_{L^\infty}, \|\partial_{q_{i,j}}\mathcal G\|_{L^\infty}$ and $|E|$. Hence by the standard Cauchy--Lipschitz theory for the linear differential equation in Banach spaces,  \eqref{adjoint2} admits a unique solution $\varphi^{h,f,t_0}\in\mathcal C^1([t_0,T];\mathcal C({\mathcal P}))$, given explicitly by \begin{align}\label{varphi:explicit}
\varphi^{h,f,t_0}(t,\cdot) = e^{-(t-t_0)\mathcal L_h}f.
\end{align}

We first establish a discrete maximum principle: for $f\geq 0$,
\begin{align}\label{positivity_v}
\min_{(t,\xi)\in[t_0,T]\times\mathcal P}\varphi^{h,f,t_0}(t,\xi)\geq\min_{\xi\in\mathcal P}f(\xi)\geq 0.
\end{align}
Fix $t_1 \in (t_0, T]$, let $\xi_1 \in \mathcal{P}$ be a spatial 
minimum point of $\varphi^{h,f,t_0}(t_1,\cdot)$, so that 
$\varphi^{h,f,t_0}(t_1,\xi_1) = \min\limits_{\xi\in\mathcal P} \varphi^{h,f,t_0}(t_1,\xi)$. 
At this minimum point, the discrete differences satisfy
\[
  D^+_{e_{i,j}} \varphi^{h,f,t_0}(t_1,\xi_1) \geq 0
  \quad\text{and}\quad
  D^-_{e_{i,j}} \varphi^{h,f,t_0}(t_1,\xi_1) \leq 0
  \quad \forall\,(i,j) \in E.
\] 
Recalling that the monotonicity of $\mathcal{G}$ implies 
$\partial_{p_{i,j}}\mathcal{G} \leq 0$ and $\partial_{q_{i,j}}\mathcal{G} \geq 0$, 
we conclude from \eqref{adjoint2} that 
$\partial_t \varphi^{h,f,t_0}(t_1,\xi_1) \geq 0$.
Hence the spatial minimum of $\varphi^{h,f,t_0}$ is non-decreasing in time, 
which gives \eqref{positivity_v}.

Applying the same argument at a spatial maximum point  
we obtain that the spatial maximum of $\varphi^{h,f,t_0}$ is non-increasing in time:
\begin{align}\label{max_v}
\max_{(t,\xi)\in[t_0,T]\times\mathcal P}\varphi^{h,f,t_0}(t,\xi) \leq \max_{\xi\in\mathcal P}f(\xi).
\end{align}
Combining \eqref{positivity_v} and \eqref{max_v},
\[
\|\varphi^{h,f,t_0}(t,\cdot)\|_{L^\infty(\mathcal P)}\leq\|f\|_{L^\infty(\mathcal P)}\quad\text{for all } t\in[t_0,T].
\] 
 Hence the map 
\[
L_{t_0}:\mathcal C({\mathcal P})\to\mathbb R,\quad L_{t_0}(f) := \varphi^{h,f,t_0}(T,\xi_0)
\]
is bounded linear with operator norm $\|L_{t_0}\|_{\mathcal L(\mathcal C({\mathcal P});\mathbb R)}\leq 1$,  and non-negative in the sense that $L_{t_0}(f)\geq 0$ whenever $f\geq 0$. By the Riesz representation theorem, $L_{t_0}$ is identified with an element of the dual 
space $\mathcal{C}(\mathcal{P})^*$, which is isomorphic 
to the space of finite signed Radon 
measures on $\mathcal{P}$. Note that $\mathcal{P}$ and $\widetilde{\mathcal{P}}$ are isomorphic 
via the bijection $\Pi$. Thus, there exists a unique non-negative finite Radon measure $\rho(t_0,\cdot)\in\mathcal M(\widetilde{\mathcal P})$ such that
\begin{align}\label{rho_def}
\int_{\widetilde{\mathcal P}}f(x)\rho(t_0,x) \mathrm dx= \varphi^{h,f,t_0}(T,\xi_0)\quad\text{for all } f\in\mathcal C(\widetilde{\mathcal P}).
\end{align}
At $t_0 = T$, we have $\varphi^{h,f,T}(T,\xi_0) = f(\xi_0)$, so $\rho(T,\cdot) = \delta_{\Pi(\xi_0)}$. 
For fixed $f\in\mathcal C({\mathcal P})$, 
from \eqref{varphi:explicit}, we compute
\[
\frac{\partial}{\partial t_0}\varphi^{h,f,t_0}(T,\xi_0) = \big[e^{-(T-t_0)\mathcal L_h}\mathcal L_h f\big](\xi_0) = \int_{\widetilde{\mathcal P}}\mathcal L_h f(x)\rho(t_0,x)\mathrm dx,
\]
where the last equality uses \eqref{rho_def} with $f$ replaced by $\mathcal L_h f$, noting that $\mathcal L_h f\in \mathcal C({\mathcal P})$. Combining this with \eqref{rho_def}, we conclude that  $\rho$ satisfies the weak formulation \eqref{rho_weak}. 

\textit{Uniqueness of $\rho$.} 
Suppose $\tilde\rho\in\mathcal C^1([0,T];\mathcal M(\widetilde{\mathcal P}))$ is another  solution satisfying \eqref{rho_weak} with $\tilde\rho(T,\cdot)=\delta_{\Pi(\xi_0)}$. Denote $\tilde\sigma:=\tilde\rho/w$. For any $f\in\mathcal C({\mathcal P})$ and $t_0\in[0,T]$, consider
\[
g(t):=\int_{\widetilde{\mathcal P}}\varphi^{h,f,t_0}(t,x)\,\tilde\sigma(t,x)\,w(x)\,\mathrm dx,\quad t\in[t_0,T].
\]
By $\varphi^{h,f,t_0}\in\mathcal C^1([t_0,T];\mathcal C({\mathcal P}))$ and $\tilde\rho\in\mathcal C^1([t_0,T];\mathcal M(\widetilde{\mathcal P}))$, $g\in\mathcal C^1([t_0,T])$ with derivative
\begin{align*}
g'(t)&= \int_{\widetilde{\mathcal P}}(\partial_t\varphi^{h,f,t_0})\,\tilde\sigma\,w\,\mathrm dx + \int_{\widetilde{\mathcal P}}\mathcal L_h\varphi^{h,f,t_0}\,\tilde\sigma\,w\,\mathrm dx \\&= -\int_{\widetilde{\mathcal P}}\mathcal L_h\varphi^{h,f,t_0}\,\tilde\sigma\,w\,\mathrm dx + \int_{\widetilde{\mathcal P}}\mathcal L_h\varphi^{h,f,t_0}\,\tilde\sigma\,w\,\mathrm dx = 0,
\end{align*}
where the first equality uses \eqref{rho_weak} and the second equality uses \eqref{adjoint2}. Hence $g$ is constant on $[t_0,T]$. Evaluating at $t=T$ and $t=t_0$,
\[
\int_{\widetilde{\mathcal P}}f(x)\,\tilde\sigma(t_0,x)\,w(x)\,\mathrm dx = g(t_0) = g(T) = \varphi^{h,f,t_0}(T,\xi_0) = \int_{\widetilde{\mathcal P}}f(x)\,\sigma^{h,\xi_0,T}(t_0,x)\,w(x)\,\mathrm dx.
\]
By the  arbitrariness of $f\in\mathcal C({\mathcal P})$, we deduce the uniqueness $\tilde\rho(t_0,\cdot)=\rho(t_0,\cdot)$.

\textit{Weighted mass conservation.}
Take $f \equiv 1$ in \eqref{adjoint2}. Since $\mathcal{L}_h[1] = 0$, 
the unique solution of \eqref{adjoint2} is $\varphi^{h,1,t_0}(t,\xi) \equiv 1$ for 
$t \in [t_0,T]$. Substituting into \eqref{rho_def} gives
\[
\int_{\widetilde{\mathcal P}}\sigma^{h,\xi_0,T}(t_0,x)w(x)\,\mathrm dx = \varphi^{h,1,t_0}(T,\xi_0) = 1.
\]
This finishes the proof.
\end{proof}
In Proposition~\ref{prop_sigma}, we write $\rho(t,x)\,\mathrm dx$ to denote the measure $\rho(t,\cdot)$, and integrals of the form $\int_{\widetilde{\mathcal P}}\phi(x)\,\rho(t,x)\,\mathrm dx$ are understood as the dual pairing of $\phi\in\mathcal C(\widetilde{\mathcal P})$ with $\rho(t,\cdot)$. This notation aligns with the one when $\sigma$ is function-valued, as in the more regular setting \eqref{adjoint3}.

\begin{lemma}\label{IBP_identity2}
Let the conditions of Theorem~\ref{thm1} hold, $h \in(0,h_0)$, 
$\xi_0 \in \mathcal{P}^{\circ}$, and $T > 0$. Then for any function 
$\varphi \in L^{\infty}\bigl((0,T)\times\mathcal{P}\bigr)$ with $\partial_t\varphi\in L^1(0,T;L^{\infty}(\mathcal P))$, 
the following weighted duality identity holds$:$
\begin{align*}
  \int_0^T \int_{\widetilde{\mathcal{P}}}
  \sigma^{h,\xi_0,T}(t,x)\,(L^h_t\varphi)(t,x)\,w(x)\,\mathrm{d}x\,\mathrm{d}t
  = \varphi(T,\xi_0) 
  - \int_{\widetilde{\mathcal{P}}}
    \varphi(0,x)\,\sigma^{h,\xi_0,T}(0,x)\,w(x)\,\mathrm{d}x,
\end{align*}where the operator $L^h$ is defined in 
\eqref{linearized}. 
\end{lemma}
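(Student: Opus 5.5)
The identity is a time-integrated version of the weak formulation \eqref{rho_weak}. The plan is to differentiate the pairing $t\mapsto \int_{\widetilde{\mathcal P}}\varphi(t,x)\,\rho(t,x)\,\mathrm dx$, where $\rho=\sigma^{h,\xi_0,T}w$ is the measure-valued solution from Proposition~\ref{prop_sigma}, and then integrate from $0$ to $T$.

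Define
\[
\Phi(t):=\int_{\widetilde{\mathcal P}}\varphi(t,x)\,\rho(t,x)\,\mathrm dx,\qquad t\in[0,T].
\]
Formally, the product rule together with \eqref{rho_weak} applied to $\phi=\varphi(t,\cdot)$ gives
\[
\Phi'(t)=\int_{\widetilde{\mathcal P}}\partial_t\varphi(t,x)\,\rho(t,x)\,\mathrm dx+\int_{\widetilde{\mathcal P}}(\mathcal L_h\varphi(t,\cdot))(x)\,\rho(t,x)\,\mathrm dx=\int_{\widetilde{\mathcal P}}(L^h_t\varphi)(t,x)\,\sigma^{h,\xi_0,T}(t,x)\,w(x)\,\mathrm dx .
\]
Integrating over $[0,T]$ and using $\rho(T,\cdot)=\delta_{\Pi(\xi_0)}$ then yields $\Phi(T)-\Phi(0)=\varphi(T,\xi_0)-\int\varphi(0,x)\sigma^{h,\xi_0,T}(0,x)w(x)\,\mathrm dx$, which is the claim.

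The main obstacle is regularity. The function $\varphi$ is only $L^\infty$ in $\xi$, and merely absolutely continuous in $t$ with values in $L^\infty$. Meanwhile \eqref{rho_weak} is stated for continuous test functions, and $\rho(t,\cdot)$ may be singular (for instance near $t=T$ it is close to a Dirac mass). I would bypass this using the representation from the proof of Proposition~\ref{prop_sigma}. Since $\mathcal L_h$ is a bounded operator on $L^\infty(\mathcal P)$ as well (with norm at most $C/h$), we can define, for bounded $f$,
\[
\int f\,\rho(t_0,x)\,\mathrm dx:=\big[e^{-(T-t_0)\mathcal L_h}f\big](\xi_0).
\]
The positivity and $L^\infty$-contraction arguments carry over to this extension. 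Because $\mathcal L_h$ acts by pointwise shifts, point evaluation at $\xi_0$ is well defined on the semigroup output, so this extension is meaningful for bounded Borel $f$.

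With this representation, $\Phi(t)=[e^{-(T-t)\mathcal L_h}\varphi(t,\cdot)](\xi_0)$. Write $S(t):=e^{-(T-t)\mathcal L_h}$, which is norm-differentiable in $\mathcal L(L^\infty)$ with $S'(t)=S(t)\mathcal L_h$. By the product rule for an absolutely continuous Banach-valued function, $\Phi$ is absolutely continuous with
\[
\Phi'(t)=\big[S(t)\big(\partial_t\varphi+\mathcal L_h\varphi\big)(t)\big](\xi_0)\quad\text{for a.e. } t .
\]
The hypothesis $\partial_t\varphi\in L^1(0,T;L^\infty)$ gives integrability, so $\int_0^T\Phi'\,\mathrm dt=\Phi(T)-\Phi(0)$. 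Finally, $\Phi(T)=\varphi(T,\xi_0)$ and $\Phi(0)=\int\varphi(0,x)\sigma^{h,\xi_0,T}(0,x)w(x)\,\mathrm dx$, which finishes the proof.

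If one prefers to stay strictly within continuous test functions, an alternative is to mollify $\varphi$ in $t$ and to approximate it in $\xi$ by continuous functions converging $\rho$-a.e. on the finitely many relevant shifted points. One would then pass to the limit by dominated convergence, using the uniform total mass bound $\|\rho(t)\|_{\mathcal M}=1$.
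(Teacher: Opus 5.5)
Your proof is correct, and its skeleton is the paper's. The paper multiplies the adjoint equation by $\varphi$, replaces the spatial integration by parts with the weak formulation \eqref{rho_weak}, and integrates by parts in time; this is exactly your computation of $\Phi'(t)$ followed by $\int_0^T\Phi'(t)\,\mathrm dt=\Phi(T)-\Phi(0)$.

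The difference lies in how you handle test functions that are not continuous in $\xi$. The paper asserts that \eqref{rho_weak} extends to $L^\infty$ test functions by a density argument. You instead bypass the weak form and differentiate the representation $\Phi(t)=[S(t)\varphi(t)](\xi_0)$ directly, using the Banach-valued product rule. Your route is the more careful one. Because $\mathcal L_h$ couples $\xi$ only with $\xi\pm he_{i,j}$, the number $[S(t)f](\xi_0)$ depends only on the values of $f$ on the finite set $(\xi_0+h\,\mathrm{span}_{\mathbb Z}\{e_{i,j}\})\cap\mathcal P$. Hence $\rho(t)$ is a finite combination of Dirac masses. This is why the pairing needs pointwise-defined (bounded Borel) representatives rather than Lebesgue classes. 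It also shows directly that your semigroup-defined pairing coincides with integration against the Riesz measure $\rho(t)$.

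Two small corrections. First, take the underlying Banach space to be the bounded Borel functions with the sup norm rather than $L^\infty$, since evaluation at $\xi_0$ is not defined on $L^\infty$ classes. Second, the coefficients of $\mathcal L_h$ depend on $t$ through $u^h(t,\cdot)$. So $e^{-(T-t)\mathcal L_h}$, taken over from \eqref{varphi:explicit}, should be read as the evolution family $U(T,t)$ with $\partial_tU(T,t)=U(T,t)\mathcal L_h(t)$. Your formula for $\Phi'$ is unchanged.
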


\begin{proof}
By Proposition~\ref{prop_sigma},  $\rho(t,\cdot) = \sigma(t,\cdot)w$ is a probability measure with $\rho\in\mathcal C^1([0,T];\mathcal M(\widetilde{\mathcal P}))$. The weak formulation \eqref{rho_weak} extends from test functions $\phi\in\mathcal C(\widetilde{\mathcal P})$ to bounded measurable $\phi\in L^\infty(\widetilde{\mathcal P})$ by standard density arguments. In particular, all dual pairings $\int\phi\rho\,\mathrm dx,\int\phi\,\partial_t\rho\,\mathrm dx$ in the proof below are well defined for $\phi\in L^\infty(\widetilde{\mathcal P})$. Moreover, integrals involving $\mathcal L_h^*\sigma$ are interpreted via the relation $\mathcal L_h^*\sigma\cdot w = -\partial_t\rho$,  which follows from \eqref{adjoint1}. Accordingly, we have that $\int(\mathcal L_h^*\sigma)\phi\, w\,\mathrm dx := -\int\phi\,\partial_t\rho\,\mathrm dx$.

Multiplying the weighted adjoint equation \eqref{adjoint1} by $\varphi(t,x)$ 
and integrating over $(t,x) \in [0,T] \times \widetilde{\mathcal{P}}$ 
with respect to the weighted measure $w(x)\,\mathrm{d}x$, we obtain
\begin{align*}
  \int_0^T \int_{\widetilde{\mathcal{P}}} 
  (\partial_t\sigma)\,\varphi\,w\,\mathrm{d}x\,\mathrm{d}t 
  + \int_0^T \int_{\widetilde{\mathcal{P}}} 
  (\mathcal{L}_h^*\sigma)\,\varphi\,w\,\mathrm{d}x\,\mathrm{d}t = 0.
\end{align*}
Applying the weighted IBP identity \eqref{IBP_identity} (equivalently, 
\eqref{IBP2}) to the second integral, we have
\begin{align*}
  \int_0^T \int_{\widetilde{\mathcal{P}}} 
  (\mathcal{L}_h^*\sigma)\,\varphi\,w\,\mathrm{d}x\,\mathrm{d}t 
  = -\int_0^T \int_{\widetilde{\mathcal{P}}} 
  \sigma\,(\mathcal{L}_h\varphi)\,w\,\mathrm{d}x\,\mathrm{d}t,
\end{align*}
where $\mathcal{L}_h$ is the spatial operator of $L^h_t$ defined in 
\eqref{linearized}. Substituting this back and integrating the time-derivative 
term by parts in $t$, we obtain
\begin{align*}
  &\quad \int_0^T \int_{\widetilde{\mathcal{P}}} 
  (\partial_t\sigma)\,\varphi\,w\,\mathrm{d}x\,\mathrm{d}t 
  - \int_0^T \int_{\widetilde{\mathcal{P}}} 
  \sigma\,(\mathcal{L}_h\varphi)\,w\,\mathrm{d}x\,\mathrm{d}t \\
  &= -\int_0^T \int_{\widetilde{\mathcal{P}}} 
  \sigma\,(\partial_t\varphi + \mathcal{L}_h\varphi)\,w\,\mathrm{d}x\,\mathrm{d}t 
  + \Bigl[
      \int_{\widetilde{\mathcal{P}}} 
      \sigma(t,x)\,\varphi(t,x)\,w(x)\,\mathrm{d}x
    \Bigr]_{t=0}^{t=T} = 0.
\end{align*}
Recognising $\partial_t\varphi + \mathcal{L}_h\varphi = L^h_t\varphi$, we arrive at
\begin{align*}
  \int_0^T \int_{\widetilde{\mathcal{P}}} 
  \sigma\,(L^h_t\varphi)\,w\,\mathrm{d}x\,\mathrm{d}t 
  = \int_{\widetilde{\mathcal{P}}} 
    \sigma(T,x)\,\varphi(T,x)\,w(x)\,\mathrm{d}x 
  - \int_{\widetilde{\mathcal{P}}} 
    \sigma(0,x)\,\varphi(0,x)\,w(x)\,\mathrm{d}x.
\end{align*}
Substituting the terminal condition 
$\sigma(T,x) = \delta_{\Pi(\xi_0)}(x)/w(x)$ into the first term on the 
right-hand side gives
\begin{align*}
  \int_{\widetilde{\mathcal{P}}} 
  \sigma(T,x)\,\varphi(T,x)\,w(x)\,\mathrm{d}x 
  = \int_{\widetilde{\mathcal{P}}} 
    \frac{\delta_{\Pi(\xi_0)}(x)}{w(x)}\,\varphi(T,x)\,w(x)\,\mathrm{d}x 
  = \varphi(T,\xi_0),
\end{align*}
which completes the proof.
\end{proof}

\subsection{\textit{A priori} estimates of the semi-discrete solution} 

Based on the gradient estimate and the semi-concavity estimate in Assumption \ref{ass:semi-concave}, we now derive several auxiliary  \textit{a priori}  estimates for the semi-discrete solution  
$u^h$. These estimates will 
play a central role in proving the first order  convergence  of the numerical method.

\begin{proposition}\label{prop_regu}Let the conditions of Theorem~\ref{thm1} hold and $h\in(0,h_0)$. Then for every $t \in [0,T]$, the following estimates hold with a constant $C>0$ 
independent of $h$:
\begin{enumerate}
   \item[\textup{(i)}] $\displaystyle
    \sup_{\xi\in\mathcal{P}}
    \max_{(i,j)\in E}
    \frac{1}{h}\bigl|D^\pm_{e_{i,j}}u^h(t,\xi)\bigr| \leq C;$
  \item[\textup{(ii)}] $\displaystyle
    \mathcal R^+_{i,j}(t,\xi):=\nabla^{e_{i,j}}u^h(t,\xi+he_{i,j}) 
    - \frac{1}{h}D^+_{e_{i,j}}u^h(t,\xi) \leq C h,\quad \xi\in\mathcal P_{2h,e_{i,j}};$
  \item[\textup{(iii)}] $\displaystyle
    -\mathcal R^-_{i,j}(t,\xi):=-\nabla^{e_{i,j}}u^h(t,\xi-he_{i,j}) 
    + \frac{1}{h}D^-_{e_{i,j}}u^h(t,\xi) \leq C h,\quad \xi\in\mathcal P_{2h,e_{i,j}}.$
\end{enumerate}
\end{proposition}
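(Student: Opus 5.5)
All three estimates follow from Assumption~\ref{ass:semi-concave} by one-dimensional Taylor expansion of $u^h(t,\cdot)$ along the direction $e_{i,j}$. The regularity in Proposition~\ref{prop_regularity} justifies this on each smoothness piece. For fixed $t$ and $(i,j)\in E$, set $\phi(s):=u^h(t,\xi+se_{i,j})$. Then $\phi'(s)=\nabla^{e_{i,j}}u^h(t,\xi+se_{i,j})$ and $\phi''(s)=e_{i,j}^\top\nabla^2u^h(t,\xi+se_{i,j})e_{i,j}$.

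Since $e_{i,j}/\sqrt2\in\mathbb V$, Assumption~\ref{ass:semi-concave}(ii) gives the one-sided bound $\phi''\le 2C$ wherever it is defined. Because $\nabla^{e_{i,j}}u^h$ lies in $P\mathcal C^0_{e_{i,j}}$, $\phi'$ is continuous off finitely many interface points. The semi-concavity bound should then be read distributionally, so that any jump of $\phi'$ at an interface is downward. On $\mathcal P_{2h,e_{i,j}}$ the segment $[\xi-he_{i,j},\xi+he_{i,j}]$ lies in $\mathcal P_{h,e_{i,j}}$ (the whole segment because $\mathcal P$ is convex), so no interface is crossed there and $\phi\in \mathcal C^2$ on the relevant interval. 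Near the interfaces I would argue on each piece separately.

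\emph{(i).} For $\xi\in\mathcal P^{\rm Int}_{e_{i,j}}$, the mean value theorem gives $\frac1h D^{+}_{e_{i,j}}u^h(t,\xi)=\phi'(\theta h)$ for some $\theta\in(0,1)$, and similarly for $D^-$. This is bounded by Assumption~\ref{ass:semi-concave}(i). On $\mathcal B_{h,e_{i,j}}$ the differences vanish by the constant extrapolation, so there is nothing to prove.

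\emph{(ii).} For $\xi\in\mathcal P_{2h,e_{i,j}}$,
\[
\mathcal R^+_{i,j}(t,\xi)=\phi'(h)-\frac1h\int_0^h\phi'(s)\,\mathrm ds=\frac1h\int_0^h\bigl(\phi'(h)-\phi'(s)\bigr)\mathrm ds=\frac1h\int_0^h\int_s^h\phi''(r)\,\mathrm dr\,\mathrm ds .
\]
Using $\phi''\le 2C$ gives $\mathcal R^+_{i,j}\le \frac1h\cdot 2C\cdot\frac{h^2}{2}=Ch$. Only an upper bound on $\phi''$ is needed, which is exactly what semi-concavity supplies; this one-sidedness is why (ii) and (iii) are stated as one-sided inequalities.

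\emph{(iii).} Similarly,
\[
-\mathcal R^-_{i,j}(t,\xi)=\frac1h\int_{-h}^0\phi'(s)\,\mathrm ds-\phi'(-h)=\frac1h\int_{-h}^0\int_{-h}^s\phi''(r)\,\mathrm dr\,\mathrm ds\le Ch .
\]

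The only delicate point is justifying the Taylor/fundamental-theorem step in the presence of the artificial interfaces from the extrapolation convention. For (ii) and (iii) the restriction to $\mathcal P_{2h,e_{i,j}}$ avoids them entirely. For (i) the mean value step needs only continuity of $\phi$ and piecewise differentiability, and $u^h$ is continuous in $\xi$ by Proposition~\ref{prop_regularity}. The constants depend only on the constant $C$ of Assumption~\ref{ass:semi-concave}, so they are independent of $h$.
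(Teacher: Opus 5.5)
Your proposal is correct and follows essentially the same route as the paper: (i) comes from integrating $\nabla^{e_{i,j}}u^h$ along $\xi\pm\tau he_{i,j}$ together with the constant-extrapolation convention, and (ii)--(iii) come from the double-integral representation $\frac1h\int_0^h\int_s^h\phi''$ combined with the one-sided bound $e_{i,j}^\top\nabla^2u^he_{i,j}\le 2C$. Two points need minor tightening. On $\mathcal P_{2h,e_{i,j}}$ the segment can still cross $\partial\mathcal P_{2h,e_{i,j}}$, so $\phi$ is only $\mathcal C^1$ with piecewise bounded $\phi''$; this still suffices, as the paper notes. In (i), the exact mean-value equality should be replaced by the piecewise inequality $|\phi(h)-\phi(0)|\le h\sup|\phi'|$.
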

\begin{proof}
By Proposition~\ref{prop_regularity},  we have $u^h(t,\cdot)\in\mathcal C(\mathcal P)$ and $\nabla^{e_{i,j}}u^h(t,\cdot)\in L^{\infty}(\mathcal P).$ For $\xi\in\mathcal P^{\mathrm{Int}}_{e_{i,j}}$ (see \eqref{interior}), the path $[0,1]\ni \tau \mapsto \xi \pm \tau h e_{i,j}$ stays in $\mathcal P.$ Hence, by the fundamental theorem of calculus,   $\frac{1}{h}D^\pm_{e_{i,j}}u^h(\xi) = \int_0^1\nabla^{e_{i,j}}u^h(\xi\pm\tau he_{i,j})\,\mathrm d\tau$. For $\xi\in\mathcal P\backslash \mathcal P^{\mathrm{Int}}_{e_{i,j}},$ the constant extrapolation gives $\frac1h D^{\pm}_{e_{i,j}}u^h=0.$ In either case, \begin{align}\label{M3}
  \sup_{\xi\in\mathcal{P}}
  \max_{(i,j)\in E}
  \Big(
    \frac{|D^+_{e_{i,j}}u^h|}{h} \vee \frac{|D^-_{e_{i,j}}u^h|}{h}
  \Big)
  \leq \sup_{\xi\in\mathcal{P}}
             \max_{(i,j)\in E}|\nabla^{e_{i,j}}u^h(t,\xi)|.
\end{align}  
Hence, 
(i) follows immediately from Assumption \ref{ass:semi-concave}(i). 

For (ii), we estimate the difference between the directional derivative and the 
forward finite difference quotient via a second-order Taylor expansion.  
For fixed $(t,\xi)$ and edge $(i,j)\in E$, define the path 
$\xi_{\tau} := \xi + \tau he_{i,j}$ for $\tau \in[0,1]$ and set 
$f(\tau) := u^h(t,\xi_{\tau})$. 
By Proposition~\ref{prop_regularity}, 
the path $\xi_\tau=\xi+\tau he_{i,j}$ for $\xi\in\mathcal{P}_{2h,e_{i,j}}$ satisfies 
$\xi_\tau\in\mathcal{P}_h$ for all $\tau\in[0,1]$, so $f'\in\mathcal{C}([0,1])$ 
with $f''\in L^\infty(0,1)$ piecewise. Hence, $f$ and $f'$ are absolutely continuous and \begin{align*}
\frac{1}{h}D^+_{e_{i,j}}u^h(t,\xi) 
= \frac{f(1)-f(0)}{h} 
= \frac{1}{h}\int_0^1 f'(\tau)\,d\tau,
\end{align*}
This yields 
\begin{align*}
\mathcal{R}^+_{i,j}(t,\xi) 
&= \nabla^{e_{i,j}}u^h(t,\xi+he_{i,j}) - \frac{1}{h}\int_0^1 f'(\tau)\,d\tau\\
&= \frac{1}{h}\int_0^1\bigl(f'(1)-f'(s)\bigr)\,ds= \frac{1}{h}\int_0^1\int_\tau^1 f''(r)\,dr\,d\tau. 
\end{align*} 
A direct calculation shows that the second derivative along the path is 
given by the Hessian of $u^h$ in the direction $e_{i,j}$:
$
  f''(\tau) 
  = h^2\,e_{i,j}^\top\nabla^2 u^h(t,\xi_\tau)\,e_{i,j}.
$
By the semi-concavity estimate (see Assumption \ref{ass:semi-concave}(ii)), we have 
$f''(\tau) \leq C h^2$ for all $\tau \in [0,1]$. Substituting this into the 
integral representation yields 
$
  \mathcal{R}^+_{i,j}(t,\xi) 
  \leq C h,$
which establishes (ii). Estimate~(iii) follows by an analogous argument 
applied to the backward difference $D^-_{e_{i,j}}u^h$, replacing $\xi_\tau$ 
by $\xi - \tau he_{i,j}$ and using the same semi-concavity bound; we omit further  
details. This completes the proof.  
\end{proof}

Proposition \ref{prop_regu}(ii)--(iii) presents the upper bound for the consistency 
remainders $\mathcal{R}^+_{i,j}$ and $-\mathcal R^-_{i,j}$. The following proposition presents estimates for $|\mathcal R^{\pm}_{i,j}|$ in the $L^1(\widetilde{\mathcal P})$ sense.

\begin{proposition}\label{lemma5.8}Let the conditions of Theorem~\ref{thm1} hold and $h\in(0,h_0)$. Then there exists a constant $C > 0$ independent of $h$ such that
\begin{align*}
  \int_{\widetilde{\mathcal{P}}} 
    \bigl(|\mathcal{R}^+_{i,j}(t,x)| + |\mathcal{R}^-_{i,j}(t,x)|\bigr)
  \,\mathrm{d}x \leq Ch.
\end{align*}
\end{proposition}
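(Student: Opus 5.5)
The plan is to turn the one-sided bounds of Proposition~\ref{prop_regu}(ii)--(iii) into $L^1$ bounds, using the standard trick that a function bounded above by $Ch$ has $L^1$ norm controlled by $Ch$ plus its integral. Concretely, on $\mathcal P_{2h,e_{i,j}}$ we write $|\mathcal R^+_{i,j}| = 2(\mathcal R^+_{i,j})^+ - \mathcal R^+_{i,j} \le 2Ch - \mathcal R^+_{i,j}$. It therefore suffices to show that $\int_{\mathcal P_{2h,e_{i,j}}} \mathcal R^+_{i,j}\,\mathrm dx \ge -Ch$. The analogous statement for $-\mathcal R^-_{i,j}$ is handled the same way.

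\textbf{Lower bound on the integral.} We express $\mathcal R^+_{i,j}$ through the Euclidean coordinates of Lemma~\ref{lemma1}, so that $\nabla^{e_{i,j}}$ becomes the directional derivative along $\vec m_{i,j}$. This gives
\[
\mathcal R^+_{i,j}(x)=\partial_{\vec m_{i,j}}\tilde u^h(x+h\vec m_{i,j})-\frac1h\big(\tilde u^h(x+h\vec m_{i,j})-\tilde u^h(x)\big),
\]
where both terms are exact derivatives or differences along the fixed direction $\vec m_{i,j}$. We then integrate along lines parallel to $\vec m_{i,j}$ (Fubini on $\widetilde{\mathcal P}$).

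For the first term, on each line segment $\ell$ the integral of $\partial_{\vec m}\tilde u^h(\cdot+h\vec m)$ equals the difference of $\tilde u^h$ at the two shifted endpoints. This is bounded by $2\|u^h\|_\infty$; more precisely it is controlled by the gradient bound of Assumption~\ref{ass:semi-concave}(i) times the length of the segment.

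For the second term, the integral of $\frac1h D^+\tilde u^h$ along $\ell$ telescopes to $\frac1h$ times the integral of $\tilde u^h$ over two end pieces of length $h$. By the mean value theorem this equals the endpoint difference of $\tilde u^h$ up to an error of size $h\|\nabla^{e_{i,j}}u^h\|_\infty$.

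Subtracting the two, the leading endpoint contributions cancel and leave $O(h)$ per line. Integrating over the transversal directions, which have bounded measure, gives $\left|\int_{\mathcal P_{2h,e_{i,j}}}\mathcal R^+_{i,j}\right|\le Ch$. In fact this yields a two-sided bound, which is more than we need.

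\textbf{The boundary layer.} It remains to control the complement $\mathcal P\setminus\mathcal P_{2h,e_{i,j}}$, a layer of thickness $O(h)$ near $\partial\mathcal P$ with Lebesgue measure $O(h)$. There $|\mathcal R^\pm_{i,j}|\le |\nabla^{e_{i,j}}u^h|+\frac1h|D^\pm u^h|\le 2C$ by Assumption~\ref{ass:semi-concave}(i) and Proposition~\ref{prop_regu}(i), interpreting values at points outside $\mathcal P$ via the constant extrapolation. So the layer contributes at most $Ch$. The estimate for $\mathcal R^-$ is symmetric, replacing the forward shift by the backward one.

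\textbf{Main obstacle.} The hardest step is doing the line-integration cancellation carefully. The line segments of $\widetilde{\mathcal P}$ in direction $\vec m_{i,j}$ cut through the simplex with endpoints on its boundary, and $\nabla^{e_{i,j}}u^h$ is only piecewise continuous across $\partial\mathcal P_{h,e_{i,j}}$ by Proposition~\ref{prop_regularity}. I would therefore carry out the telescoping only on the part of each line inside $\mathcal P_{2h,e_{i,j}}$, where Proposition~\ref{prop_regularity} guarantees that $\tilde u^h$ is $C^1$ along the line. All mismatched end pieces of length $O(h)$ are then absorbed using the uniform bound of Proposition~\ref{prop_regu}(i).
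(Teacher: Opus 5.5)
Your argument is correct and has the same architecture as the paper's proof. You split $\widetilde{\mathcal P}$ into $\widetilde{\mathcal P}_{2h,e_{i,j}}$ and the boundary layer. On the interior you use $|a|=2a^+-a$ with the one-sided bounds of Proposition~\ref{prop_regu}(ii)--(iii), which reduces everything to the signed integrals $\int\mathcal R^\pm_{i,j}$. On the layer you use the crude estimate $O(1)\times O(h)$.

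The one real difference is how you control the signed integrals. The paper rewrites $\mathcal R^+_{i,j}=\frac1h\int_0^h\int_s^h\nabla^2_{e_{i,j}}u^h(\xi+\tau e_{i,j})\,\mathrm d\tau\,\mathrm ds$ and exchanges the order of integration. It then applies Gauss--Green on $\widetilde{\mathcal P}_{2h,e_{i,j}}$, turning the volume integral of $\nabla^2_{e_{i,j}}u^h$ into a surface integral of $\nabla^{e_{i,j}}u^h$. That surface integral is bounded by the gradient bound times the surface area, which is uniform in $h$.

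You instead telescope at the level of $u^h$ along each line in direction $\vec m_{i,j}$. Set $f(s)=\tilde u^h(x_0+s\vec m_{i,j})$ and let $[a,b]$ be the segment cut out by the convex set $\widetilde{\mathcal P}_{2h,e_{i,j}}$. Then
\[
\int_a^b\mathcal R^+_{i,j}\,\mathrm ds=\Bigl[f(b+h)-\tfrac1h\textstyle\int_b^{b+h}f\Bigr]-\Bigl[f(a+h)-\tfrac1h\textstyle\int_a^{a+h}f\Bigr],
\]
and each bracket is at most $\tfrac h2\|f'\|_\infty$; the analogous identity handles $\mathcal R^-_{i,j}$.

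The two computations are the same identity with the order of integration exchanged, since directional Gauss--Green is just Fubini plus the one-dimensional fundamental theorem of calculus. Yours is somewhat more economical. It needs only the Lipschitz bound of Assumption~\ref{ass:semi-concave}(i) and absolute continuity of $u^h$ along lines. It avoids the second directional derivative, the regularity of $\nabla^{e_{i,j}}u^h$ that Gauss--Green requires, and any surface-area bound for $\partial\widetilde{\mathcal P}_{2h,e_{i,j}}$. Semi-concavity still enters, but only through the one-sided bounds.

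Your side remark that the derivative term alone is bounded per line by $2\|u^h\|_\infty$ plays no role. Only the cancellation between the two terms matters.
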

\begin{proof}For brevity, we write $\widetilde{\mathcal{P}}_{h,e_{i,j}} := \Pi(\mathcal{P}_{h,e_{i,j}})$ 
and $\widetilde{\mathcal{P}}_{2h,e_{i,j}} := \Pi(\mathcal{P}_{2h,e_{i,j}})$. 
We decompose the domain $\widetilde{\mathcal{P}}=\widetilde{\mathcal P}_{2h,e_{i,j}}\cup\widetilde{\mathcal P}\backslash\widetilde{\mathcal P}_{2h,e_{i,j}}$, and let
\begin{align*}
  \mathcal{I} 
  &:= \int_{\widetilde{\mathcal{P}}_{2h,e_{i,j}}} 
           \bigl(|\mathcal{R}^+_{i,j}| + |\mathcal{R}^-_{i,j}|\bigr)
      \,\mathrm{d}x
    + \int_{\widetilde{\mathcal{P}}\backslash\widetilde{\mathcal P}_{2h,e_{i,j}}} 
           \bigl(|\mathcal{R}^+_{i,j}| + |\mathcal{R}^-_{i,j}|\bigr)
      \,\mathrm{d}x
  =: \mathcal{I}_{\mathrm{Int}} + \mathcal{I}_{\mathrm{Bound}}.
\end{align*}

\medskip
\textit{Estimation of the interior term $\mathcal{I}_{\mathrm{Int}}$.}
Using the identity $|a| = 2\max\{a,0\} - a,\,a\in\mathbb R,$ together with 
Proposition~\ref{prop_regu}(ii)--(iii), we estimate each term at a point 
$\xi \in {\mathcal{P}}_{2h,e_{i,j}}$:
\begin{align*}
 |\mathcal{R}^+_{i,j}(t,\xi)| 
  &= 2\max\{0,\mathcal{R}^+_{i,j}\} 
     - \mathcal{R}^+_{i,j} 
  \leq Ch - \mathcal{R}^+_{i,j}(t,\xi), \\
  |\mathcal{R}^-_{i,j}(t,\xi)| 
  &=2\max\{0,-\mathcal{R}^-_{i,j}\} 
     + \mathcal{R}^-_{i,j} 
  \leq Ch + \mathcal{R}^-_{i,j}(t,\xi).
\end{align*}
Integrating over $\widetilde{\mathcal{P}}_{2h,e_{i,j}}$ 
gives
\[
  \mathcal{I}_{\mathrm{Int}} 
  \leq Ch 
    +\Big[- \int_{\widetilde{\mathcal{P}}_{2h,e_{i,j}}} 
      \mathcal{R}^+_{i,j}(t,x)\,\mathrm{d}x
    + \int_{\widetilde{\mathcal{P}}_{2h,e_{i,j}}} 
      \mathcal{R}^-_{i,j}(t,x)\,\mathrm{d}x\Big]
  =: Ch + I_R,
\]
where $I_R:=- \int_{\widetilde{\mathcal{P}}_{2h,e_{i,j}}} 
      \mathcal{R}^+_{i,j}(t,x)\,\mathrm{d}x
    + \int_{\widetilde{\mathcal{P}}_{2h,e_{i,j}}} 
      \mathcal{R}^-_{i,j}(t,x)\,\mathrm{d}x$ denotes the integral involving $\mathcal R^{\pm}_{i,j}.$

To bound $I_R$, we use integral representations of $\mathcal{R}^\pm_{i,j}$. 
 By 
Proposition~\ref{prop_regularity}, we have $\nabla^{e_{i,j}}u^h\in\mathcal C(\mathcal P_{h,e_{i,j}})$ and is absolutely continuous with $e^{\top}_{i,j}\nabla^2u^he_{i,j}\in L^{\infty}(\mathcal P_{h,e_{i,j}})$.   Hence for $\xi \in \mathcal{P}_{2h,e_{i,j}}$, the fundamental theorem of calculus gives
\[
  D^+_{e_{i,j}}u^h(t,\xi) 
  = \int_0^h \nabla^{e_{i,j}}u^h(t,\xi+se_{i,j})\,\mathrm{d}s,
\]
so that
\begin{align*}
  \mathcal{R}^+_{i,j}(t,\xi) 
  &= \frac{1}{h}\int_0^h 
     \Bigl(\nabla^{e_{i,j}}u^h(t,\xi+he_{i,j}) 
           - \nabla^{e_{i,j}}u^h(t,\xi+se_{i,j})\Bigr)\,\mathrm{d}s\\
           &
  = \frac{1}{h}\int_0^h\int_s^h 
    \nabla^2_{e_{i,j}}u^h(t,\xi+\tau e_{i,j})\,\mathrm{d}\tau\,\mathrm{d}s.
\end{align*} 
Here and below, we use the shorthand notation for the second directional 
derivative
\[
  \nabla^2_{e_{i,j}}u^h(\xi) 
  := e_{i,j}^\top \nabla^2 u^h(\xi)\, e_{i,j} 
  = (\partial_{\xi_i} - \partial_{\xi_j})^2 u^h(\xi),
\]
in a manner consistent with the  notation 
$\nabla^{e_{i,j}}u^h = (\partial_{\xi_i}-\partial_{\xi_j})u^h$ 
used throughout the paper.
Similarly,
\begin{align*}
  \mathcal{R}^-_{i,j}(t,\xi) 
  &= -\frac{1}{h}\int_{-h}^0\int_{-h}^s 
    \nabla^2_{e_{i,j}}u^h(t,\xi+\tau e_{i,j})\,\mathrm{d}\tau\,\mathrm{d}s.
\end{align*}
As a result,
\begin{align*}
  |I_R| 
  &\leq \frac{1}{h}\int_0^h\int_s^h 
    \Big|\int_{\widetilde{\mathcal{P}}_{2h,e_{i,j}}} 
    \nabla^2_{e_{i,j}}u^h(t,x+\tau\vec{m}_{i,j})\,\mathrm{d}x\Big|
    \,\mathrm{d}\tau\,\mathrm{d}s \\
  &\quad + \frac{1}{h}\int_{-h}^0\int_{-h}^s 
    \Big|\int_{\widetilde{\mathcal{P}}_{2h,e_{i,j}}} 
    \nabla^2_{e_{i,j}}u^h(t,x+\tau\vec{m}_{i,j})\,\mathrm{d}x\Big|
    \,\mathrm{d}\tau\,\mathrm{d}s,
\end{align*}
where we applied Fubini's theorem to exchange the order of integration. 
Applying the Gauss--Green formula (see e.g. \cite[Chapter~XXIII]{Lang}) in the direction 
$e_{i,j}$,
\begin{align*}
  \Big|\int_{\widetilde{\mathcal{P}}_{2h,e_{i,j}}} 
  \nabla^2_{e_{i,j}}u^h(t,x+\tau\vec{m}_{i,j})\,\mathrm{d}x\Big| 
  = \Big|\oint_{\partial\widetilde{\mathcal{P}}_{2h,e_{i,j}}} 
    \nabla^{e_{i,j}}u^h(t,y+\tau\vec{m}_{i,j})\,(\mathbf{n}\cdot e_{i,j})\,
    \mathrm{d}S_y\Big|,
\end{align*}
where $\mathrm  dS_y$ denotes the  $(d-2)$-dimensional surface measure on $\partial\widetilde{\mathcal{P}}_{2h,e_{i,j}}$ and $\mathbf{n}$ is the outward unit normal to 
$\partial\widetilde{\mathcal{P}}_{2h,e_{i,j}}$ and we use the fact that this boundary is Lipschitz. By 
Proposition~\ref{prop_regu}(i), $\|\nabla u^h\|_{L^\infty(\mathcal P)} \leq C$, 
and the surface area of 
$\partial\widetilde{\mathcal{P}}_{2h,e_{i,j}}$ is 
bounded independently of $h$. Therefore,
\[
  \Big|\int_{\widetilde{\mathcal{P}}_{2h,e_{i,j}}} 
  \nabla^2_{e_{i,j}}u^h(t,x+\tau\vec{m}_{i,j})\,\mathrm{d}x\Big| \leq C.
\]
Since $\frac{1}{h}\int_0^h\int_s^h\mathrm{d}\tau\,\mathrm{d}s = \frac{1}{2}h$, 
we conclude $|I_R| \leq Ch$ and hence $\mathcal{I}_{\mathrm{Int}} \leq Ch$.

\medskip
\textit{Estimation of the boundary term $\mathcal{I}_{\mathrm{Bound}}$.}
On the boundary region $\widetilde{\mathcal{P}}\backslash \widetilde{\mathcal P}_{2h,e_{i,j}}$, 
the second-order Taylor expansion and the Gauss--Green formula used in the interior estimate may fail. 
We therefore use the first-order gradient bound to estimate the upper bound for $\mathcal I_{\rm Bound}$. By definition,
\[
\mathcal{R}^+_{i,j}(t,x) 
  = \nabla^{e_{i,j}}u^h(t,x+he_{i,j}) 
    - \frac{1}{h}D^+_{e_{i,j}}u^h(t,x),
\]
and an analogous expression holds for $\mathcal{R}^-_{i,j}$. 
By Proposition~\ref{prop_regu}(i) and Assumption \ref{ass:semi-concave}(i), both 
$\|\nabla u^h\|_{L^\infty(\mathcal P)}$ and $\|\frac{1}{h}D^\pm u^h\|_{L^\infty(\mathcal P)}$ 
are bounded by $C$, so
\[
    |\mathcal{R}^+_{i,j}(t,x)| + |\mathcal{R}^-_{i,j}(t,x)| 
  \leq {2C}
  \quad \forall\, x \in \widetilde{\mathcal{P}}\backslash\widetilde{\mathcal P}_{2h,e_{i,j}}.
\]
Notice that the Lebesgue measure of the boundary layer satisfies 
$\mathrm{Vol}(\widetilde{\mathcal{P}}\backslash\widetilde{\mathcal P}_{2h,e_{i,j}} ) \leq C_G h$, where 
$C_G$ depends only on the geometry of $G$ and $\mathcal{P}$ 
but not on $h$. Consequently,
\[
  \mathcal{I}_{\mathrm{Bound}}
  \leq \int_{\widetilde{\mathcal{P}}\backslash\widetilde{\mathcal P}_{2h,e_{i,j}} } 
      2C\,\mathrm{d}x 
  \leq {2C}C_G h.
\]

Combining the interior and boundary layer estimates yields
\[
  \mathcal{I} 
  = \mathcal{I}_{\mathrm{Int}} + \mathcal{I}_{\mathrm{Bound}} \leq Ch,
\]
where $C$ is independent of $h$. This completes the proof of Proposition~\ref{lemma5.8}.\end{proof}

\subsection{$L^1$-error estimate} The convergence analysis in this subsection requires a variant of the adjoint variable $\sigma^{h,\xi_0,T}$ in \eqref{adjoint1}, with the Dirac terminal datum $\delta_{\xi_0}/w$ replaced by a general bounded measurable function $\nu\in L^\infty(\mathcal P)$. Concretely, for each fixed $\nu\in L^\infty(\mathcal P)$, let $\sigma^{h,\nu,T}$ be the solution to the same backward adjoint equation \eqref{adjoint1}, but with terminal datum  $\nu$ \begin{align}\label{adjoint3}
\begin{cases}
\partial_t\sigma^{h,\nu,T}(t,\xi) 
+ \displaystyle\sum_{(i,j)\in E}\frac{\sqrt{\omega_{i,j}}}{h}
  \Bigl(
    D^-_{e_{i,j}}\bigl(\sigma^{h,\nu,T} \partial_{p_{i,j}}\mathcal{G}(\xi,[D^\pm u^h])\bigr) \\
\quad + D^+_{e_{i,j}}\bigl(\sigma^{h,\nu,T} \partial_{q_{i,j}}\mathcal{G}(\xi,[D^\pm u^h])\bigr)
  \Bigr)
+ \mathcal{S}(\sigma^{h,\nu,T},\mathcal{G},w) = 0, 
\quad t \in (0,T),\; \xi \in \mathcal{P}^{\circ}, \\[4pt]
\sigma^{h,\nu,T}(T,\xi) = \nu(\xi), \quad \xi \in \mathcal{P}^{\circ},
\end{cases}
\end{align}where $\mathcal S$ is given in \eqref{drift_term}. With this bounded terminal datum, $\sigma^{h,\nu,T}$ is a  function rather than a measure-valued solution. For each fixed $h\in(0,h_0)$, existence and uniqueness in $\mathcal C^1([0,T];L^1_w)$ follow from standard linear evolution theory applied to \eqref{adjoint3}, since  the operator $\mathcal L_h^*$ is bounded linear on $L^1_w$, with operator norm depending on $h$. 
The next lemma provides an $h$-independent $L^\infty$-bound for $w\sigma^{h,\nu,T}$, 
which relies crucially on the semi-concavity estimate.

\begin{proposition}\label{bound_sigma}
Let the conditions of Theorem~\ref{thm1} hold and $h\in(0,h_0)$. There exists a constant 
$C > 0$ independent of $h$ such that
\[
  \sup_{t\in[0,T]}\sup_{\xi\in\mathcal{P}}
  |w(\xi)\,\sigma^{h,\nu,T}(t,\xi)| \leq C\,\|\nu\|_{L^\infty(\mathcal{P})}.
\] 
\end{proposition}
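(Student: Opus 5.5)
Set $\rho(t,\xi):=w(\xi)\sigma^{h,\nu,T}(t,\xi)$. By linearity and the positivity of the solution map (Proposition~\ref{prop_sigma} together with the maximum principle \eqref{positivity_v}), we may split $\nu=\nu^+-\nu^-$. It therefore suffices to take $\nu\ge0$ and prove $\sup_\xi\rho(t,\xi)\le C\|\nu\|_{L^\infty}$.

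By \eqref{adjoint_exact}, $\rho$ solves the conservative backward equation
\[
\partial_t\rho+\sum_{(i,j)\in E}\frac{\sqrt{\omega_{i,j}}}{h}\Big(D^-_{e_{i,j}}(\rho\,\mathcal A_{i,j})+D^+_{e_{i,j}}(\rho\,\mathcal B_{i,j})\Big)=0,\qquad \rho(T)=w\nu .
\]
Here the zero extension \eqref{zero_G} is used outside $\mathcal P$. The terminal value satisfies $\rho(T)\le\|w\|_{L^\infty}\|\nu\|_{L^\infty}$. Reversing time, $\tau=T-t$, this becomes a forward discrete continuity equation. The plan is to run a maximum-principle argument: at a spatial maximum point $\xi^*$ of $\rho(\tau,\cdot)$, compute $\frac{d}{d\tau}\rho(\tau,\xi^*)$.

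Expand with the discrete product rule. This gives
\[
D^-(\rho\mathcal A)=\mathcal A(\xi-he)\,D^-\rho+\rho(\xi)\,D^-\mathcal A .
\]
At the maximum, $D^-\rho\ge0$ and $\mathcal A\le0$, and $D^+\rho\le0$ and $\mathcal B\ge0$. Hence the "transport" parts have the favorable sign, and we are left with
\[
\tfrac{d}{d\tau}\rho(\xi^*)\le -\rho(\xi^*)\sum_{(i,j)\in E}\frac{\sqrt{\omega_{i,j}}}{h}\Big(D^-_{e_{i,j}}\mathcal A_{i,j}+D^+_{e_{i,j}}\mathcal B_{i,j}\Big)(\xi^*).
\]
The key step is a one-sided bound $-\frac1h(D^-\mathcal A+D^+\mathcal B)\le C$, uniformly in $h$. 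This is exactly the discrete analogue of "$\operatorname{div}$ of the characteristic velocity is bounded below", and it is where semi-concavity enters.

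Write $\mathcal A(\xi)-\mathcal A(\xi-he)$ as a chain rule in $\xi$ and in the arguments $P,Q$. The $\xi$-dependence contributes $O(h)$ by \eqref{growth3}. The $(P,Q)$-dependence contributes terms of the form $\partial^2_{pp}\mathcal G\cdot(D^+u^h(\xi)-D^+u^h(\xi-he))/h$, which is a second difference of $u^h$, plus the analogous $\partial^2_{pq}$ and $\partial^2_{qq}$ terms. By Assumption~\ref{ass:semi-concave}(ii) (as in Proposition~\ref{prop_regu}), second differences along $e_{i,j}$ are at most $Ch^2$ from above. Since \eqref{Gbounded} gives nonnegative coefficients bounded by $Cg_{i,j}$, the signs combine to give $-\frac{\sqrt{\omega}}{h}(D^-\mathcal A+D^+\mathcal B)\le C$. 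Cross-edge terms $\partial^2_{p_{ij}p_{kl}}$ must be handled via the edgewise decomposition \eqref{H_special1}, so that only diagonal second derivatives appear. Near the boundary, $\mathcal G=0$ outside $\mathcal P$ and the constant extrapolation make $D^\pm u^h=0$. I will check that the resulting jump in $\mathcal A$ has the right sign: there $\rho\,\mathcal A$ is moved to zero and $\mathcal A\le0$, so $-D^-\mathcal A$ only helps at an exit point. A bounded correction from the factor $g_{i,j}\to0$ is also needed.

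Given the one-sided bound, Gronwall yields $\max\rho(\tau)\le e^{C\tau}\max\rho(0)\le e^{CT}\|w\|_\infty\|\nu\|_\infty$. The main obstacles are two. First, justifying the maximum-point argument, since $\rho$ is only piecewise continuous by Proposition~\ref{prop_regularity}; I would work with $\operatorname{ess\,sup}$, or approximate via the $L^p_w$-norm evolution and let $p\to\infty$. Second, the boundary layer bookkeeping, where the sign of the truncation terms must be verified case by case.
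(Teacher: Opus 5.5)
Your strategy is the paper's: pass to $\Phi=w\sigma^{h,\nu,T}$, which solves the conservative equation from \eqref{adjoint_exact}; split off the transport terms via the discrete product rule; bound the zeroth-order coefficient from one side using semi-concavity, \eqref{Gbounded}, \eqref{growth3} and the edgewise structure \eqref{H_special1}; treat the layer $\mathcal P\setminus\mathcal P_{3h,e_{i,j}}$ via $g_{i,j}=O(h)$; and close with a maximum point and Gronwall backward from $t=T$.

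However, the signs are inverted in two places, and as written the key step asserts an inequality that is not available. With $\tau=T-t$ you get $\partial_\tau\rho=+\sum_{(i,j)\in E}\frac{\sqrt{\omega_{i,j}}}{h}\bigl(D^-_{e_{i,j}}(\rho\mathcal A_{i,j})+D^+_{e_{i,j}}(\rho\mathcal B_{i,j})\bigr)$. So at a maximum point
\[
\tfrac{d}{d\tau}\rho(\xi^*)\le +\rho(\xi^*)\sum_{(i,j)\in E}\frac{\sqrt{\omega_{i,j}}}{h}\bigl(D^-_{e_{i,j}}\mathcal A_{i,j}+D^+_{e_{i,j}}\mathcal B_{i,j}\bigr)(\xi^*),
\]
and what is needed is the upper bound $\frac{\sqrt{\omega_{i,j}}}{h}(D^-_{e_{i,j}}\mathcal A_{i,j}+D^+_{e_{i,j}}\mathcal B_{i,j})\le C$, which is Step 2 of the paper. (In reversed time the velocity is $-(\mathcal A+\mathcal B)$, so ``divergence bounded below'' means exactly this.)

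That is what your chain-rule computation actually produces. For example, $\partial^2_{p_{i,j}p_{i,j}}\mathcal G\ge0$ multiplies
\[
(P(\xi)-P(\xi-he_{i,j}))_{i,j}=\frac{\sqrt{\omega_{i,j}}}{h}\bigl(u^h(\xi+he_{i,j})-2u^h(\xi)+u^h(\xi-he_{i,j})\bigr)\le Ch .
\]
This gives $D^-_{e_{i,j}}\mathcal A_{i,j}\le Ch$, up to the $O(h)$ term from \eqref{growth3}. That is an upper bound on $D^-\mathcal A+D^+\mathcal B$, not on its negative.

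The bound you state, $-\frac1h(D^-\mathcal A+D^+\mathcal B)\le C$, would require a \emph{lower} bound on $e_{i,j}^\top\nabla^2u^h e_{i,j}$. Assumption~\ref{ass:semi-concave} does not give one. Moreover, the bound generically fails where the scheme smears the concave kinks that semi-concave solutions develop, since there $\frac1h D^-_{e_{i,j}}\mathcal A_{i,j}$ is of order $-1/h$. The two slips cancel, so once corrected your argument coincides with the paper's.

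The same inversion affects your boundary sign check. Where $\xi-he_{i,j}\notin\mathcal P$, the zero extension gives $D^-_{e_{i,j}}\mathcal A_{i,j}(\xi)=\mathcal A_{i,j}(\xi)\le0$. This helps the correct upper bound on $D^-\mathcal A$, not an upper bound on $-D^-\mathcal A$. The mechanism that actually works in the layer is the one you list last, which the paper uses in Case B. There $0\le\partial^2\mathcal G\le Cg_{i,j}\le Ch$, while $\Delta P^\pm_{i,j},\Delta Q^\pm_{i,j}=O(1)$ by the gradient bound. This controls those terms in both directions without semi-concavity.

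Two of your additions tighten the paper's argument:
\begin{itemize}
\item The reduction to $\nu\ge0$. The paper tacitly uses $\Phi\ge0$ when it writes $\beta(t)=\|\Phi(t)\|_{L^\infty}$ and bounds the zeroth-order term by $C\beta$.
\item Your concern about the existence of a maximum point for a merely bounded $\Phi$, which the paper does not address. Your ess sup or $L^p$, $p\to\infty$, workaround is a reasonable way to make Step 3 rigorous.
\end{itemize}
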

\begin{proof}
\textit{Step 1: Reduction to an evolution equation for $\Phi(t,\xi) := w(\xi)\,\sigma^{h,\nu,T}(t,\xi)$.}
Substituting $\sigma = \Phi/w$ into \eqref{adjoint_exact} and 
multiplying by $w(\xi)$ (which is independent of time), we obtain
\begin{align*}
  \partial_t\Phi 
  + \sum_{(i,j)\in E}\frac{\sqrt{\omega_{i,j}}}{h}
    \Bigl(
      D^-_{e_{i,j}}(\Phi\,\mathcal{A}_{i,j}) 
      + D^+_{e_{i,j}}(\Phi\,\mathcal{B}_{i,j})
    \Bigr) = 0
\end{align*}
on $[0,T]\times \mathcal P,$ where $\mathcal{A}_{i,j}(\xi) := \partial_{p_{i,j}}\mathcal{G}(\xi,[D^\pm u^h])$ 
and $\mathcal{B}_{i,j}(\xi) := \partial_{q_{i,j}}\mathcal{G}(\xi,[D^\pm u^h])$.
Using the discrete product rules for functions $\varphi_1,\varphi_2:\mathcal P\to\mathbb R$
 \begin{align*}&\quad D^+_{e_{i,j}}(\varphi_1\varphi_2)(\xi)= (\varphi_1\varphi_2)(\xi+he_{i,j})-(\varphi_1\varphi_2)(\xi)\\
&=\varphi_2(\xi+he_{i,j})(\varphi_1(\xi+he_{i,j})-\varphi_1(\xi))+\varphi_1(\xi)(\varphi_2(\xi+he_{i,j})-\varphi_2(\xi))\\&=
\varphi_2(\xi+he_{i,j})D^+_{e_{i,j}}\varphi_1(\xi)+\varphi_1(\xi)D^+_{e_{i,j}}\varphi_2(\xi),
\end{align*}
and 
\begin{align*}&\quad D^-_{e_{i,j}}(\varphi_1\varphi_2)(\xi)= (\varphi_1\varphi_2)(\xi)-(\varphi_1\varphi_2)(\xi-he_{i,j})\\
&=\varphi_2(\xi-he_{i,j})(\varphi_1(\xi)-\varphi_1(\xi-he_{i,j}))+\varphi_1(\xi)(\varphi_2(\xi)-\varphi_2(\xi-he_{i,j}))\\&=
\varphi_2(\xi-he_{i,j})D^-_{e_{i,j}}\varphi_1(\xi)+\varphi_1(\xi)D^-_{e_{i,j}}\varphi_2(\xi),
\end{align*}
we obtain the evolution equation for $\Phi$:
\begin{align}\label{Phi_eq1}
  \partial_t\Phi 
  + \sum_{(i,j)\in E}\frac{\sqrt{\omega_{i,j}}}{h}
    \Bigl[
      &\mathcal{A}_{i,j}(\xi-he_{i,j})\,D^-_{e_{i,j}}\Phi 
      + \mathcal{B}_{i,j}(\xi+he_{i,j})\,D^+_{e_{i,j}}\Phi \notag\\
      &+ \bigl(D^-_{e_{i,j}}\mathcal{A}_{i,j} (\xi)
               + D^+_{e_{i,j}}\mathcal{B}_{i,j}(\xi)\bigr)\Phi
    \Bigr] = 0.
\end{align}

\textit{Step 2: Upper bound for zeroth-order coefficient $\frac{\sqrt{\omega_{i,j}}}{h} (D^-_{e_{i,j}}\mathcal{A}_{i,j} + D^+_{e_{i,j}}\mathcal{B}_{i,j})$.}  
By the definition of the forward and backward differences, we have 
\begin{align}\label{differ1}
  &D^-_{e_{i,j}}\mathcal{A}_{i,j} + D^+_{e_{i,j}}\mathcal{B}_{i,j}
  = \phi(1) - \phi(0),
\end{align} 
where 
\begin{align}\label{function_phi}
\phi(\tau):=\mathcal A_{i,j}(\Xi_1(\tau))+\mathcal B_{i,j}(\Xi_2(\tau)),\;\tau\in[0,1],
\end{align}
and \begin{align*}&\Xi_1(\tau):=(\xi+(\tau-1)he_{i,j}, \tau P(\xi)+(1-\tau)P(\xi-he_{i,j}), 
\tau Q(\xi)+(1-\tau)Q(\xi-he_{i,j})),\\
&\Xi_2(\tau):=(\xi+\tau he_{i,j}, (1-\tau)P(\xi)+\tau P(\xi+he_{i,j}), 
(1-\tau )Q(\xi)+\tau Q(\xi+he_{i,j})).
\end{align*}
Here, for notational simplicity, we write \begin{align*}
&P(\xi)=[D^+u^h](\xi),\quad Q(\xi)=[D^-u^h](\xi). 
  \end{align*}
Set the increment vectors
\begin{align*}
&
  \Delta P^{\pm}_{i,j} :=(P(\xi\pm he_{i,j})-P(\xi))_{i,j}=\frac{\sqrt{\omega_{i,j}}}{h}(D^+_{e_{i,j}}u^h(\xi\pm he_{i,j})-D^+_{e_{i,j}}u^h(\xi)),\\
  &
  \Delta Q^{\pm}_{i,j} := (Q(\xi\pm he_{i,j})-Q(\xi))_{i,j}= \frac{\sqrt{\omega_{i,j}}}{h}(D^-_{e_{i,j}}u^h(\xi\pm he_{i,j})-D^-_{e_{i,j}}u^h(\xi)).
\end{align*} 
By the mean value theorem, 
\begin{align*}
\phi(1)-\phi(0)&=\Big[h\int_0^1\nabla^{e_{i,j}}\mathcal A_{i,j}(\Xi_1(\tau ))\mathrm d\tau+h\int_0^1\nabla^{e_{i,j}}\mathcal B_{i,j}(\Xi_2(\tau ))\mathrm d\tau \Big]\\
&\quad +\Big[\int_0^1\partial_{p_{i,j}}\mathcal A_{i,j}(\Xi_1(\tau))\mathrm d\tau (-\Delta P^{-}_{i,j})
+\int_0^1\partial_{q_{i,j}}\mathcal A_{i,j}(\Xi_1(\tau))\mathrm d\tau (-\Delta Q^{-}_{i,j})\\&
\qquad +\int_0^1\partial_{p_{i,j}}\mathcal B_{i,j}(\Xi_2(\tau))\mathrm d\tau (\Delta P^{+}_{i,j})+\int_0^1\partial_{q_{i,j}}\mathcal B_{i,j}(\Xi_2(\tau))\mathrm d\tau (\Delta Q^{+}_{i,j})\Big]\\
&=:J_1+J_2. 
\end{align*}

We now bound each term. Since 
$\frac{1}{h}|D^\pm_{e_{i,j}}u^h| \leq C$ by Proposition~\ref{prop_regu}(i), 
the mixed-derivative condition \eqref{growth3} yields
\begin{align}\label{eq1}
  \sup_{\xi\in\mathcal{P}}
  \Bigl(
    |\nabla^{e_{i,j}}\mathcal{A}_{i,j}(\Xi_1(\tau))|(\xi) 
    + |\nabla^{e_{i,j}}\mathcal{B}_{i,j}(\Xi_2(\tau))|(\xi)
  \Bigr) \leq C, \text{ and thus } J_1\leq Ch.
\end{align} 
For the term $J_2,$ we distinguish two cases according to where $\xi$ sits.

\underline{Case A: $\xi\in\mathcal{P}_{3h,e_{i,j}}$.} In this case,  
$\xi,\xi\pm he_{i,j}, \xi\pm 2he_{i,j}\in\mathcal{P}_{h,e_{i,j}}$, and by Proposition \ref{prop_regularity}, we have $\nabla^{e_{i,j}}u^h\in\mathcal C(\mathcal P_{h,e_{i,j}})$ and is absolutely continuous with $e^{\top}_{i,j}\nabla^2u^he_{i,j}\in L^{\infty}(\mathcal P_{h,e_{i,j}})$.  
By Taylor's theorem and the semi-concavity 
estimate in Assumption \ref{ass:semi-concave},  we have 
\begin{align*}
-\Delta P^-_{i,j}&=\frac1h\Big(D^+_{e_{i,j}} u^h(\xi)-D^+_{e_{i,j}}u^h(\xi-he_{i,j})\Big)\\
&= \frac1h(u^h(\xi+he_{i,j})-2u^h(\xi)+u^h(\xi-he_{i,j}))\\
&=\int_0^1\nabla^{e_{i,j}}(u^h(\xi+\tau he_{i,j})-u^h(\xi+(\tau-1)he_{i,j}))\mathrm d\tau\\
&=h\int_0^1\int_0^1{e_{i,j}}^{\top}\nabla^2u^h(\xi+(s-1+\tau)he_{i,j})\,{e_{i,j}}\mathrm ds\mathrm d\tau
\leq Ch,
\end{align*}
and similarly, $\max\{-\Delta Q^-_{i,j},\Delta P^+_{i,j},\Delta Q^+_{i,j}\}\leq Ch$ 
on $\mathcal{P}_{3h,e_{i,j}}$.  
Combining this with the positivity condition on second order derivatives of $\mathcal G$ with $P,Q$ variables (see \eqref{Gbounded}), we derive 
\begin{align}\label{eq2bound}
&J_2\leq Ch\text{ on }\mathcal P_{3h,e_{i,j}}.
\end{align}

\underline{Case B: $\xi\in\mathcal{P}\setminus\mathcal{P}_{3h,e_{i,j}}$.}
By the gradient bound Proposition~\ref{prop_regu}(i), the increments are uniformly bounded but only of size $\mathcal O(1)$:
\begin{align}\label{DeltaPQ_layer}
  |\Delta P^{\pm}_{i,j}|,\ |\Delta Q^{\pm}_{i,j}| \leq C
  \quad\text{on } \mathcal{P}.
\end{align}
The Taylor route used in Case A is therefore unavailable here, but the loss is compensated by the boundary vanishing of the Hamiltonian. Indeed, 
by \eqref{Gbounded} and Assumption~\ref{assumption_g}(g-ii), we have 
\begin{align}\label{eq:G_boundary}
  0 \leq \partial^2_{p_{i,j} p_{i,j}}\mathcal{G},\ \partial^2_{p_{i,j} q_{i,j}}\mathcal{G},\ \partial^2_{q_{i,j} q_{i,j}}\mathcal{G} 
  \leq C\, g_{i,j}(\xi)\leq C\xi_i\wedge\xi_j
  \quad\text{for }\xi\in\mathcal{P}.
\end{align}
Now $\xi\in\mathcal{P}\setminus\mathcal{P}_{3h,e_{i,j}}$ means that $\xi_i<3h$ or $\xi_j<3h$ for $(i,j)\in E$. 
Along the paths $\Xi_1(\tau)_\xi = \xi+(\tau-1)he_{i,j}$ and $\Xi_2(\tau)_\xi = \xi+\tau he_{i,j}$, the components $\Xi_k(t)_{\xi,i}$ and $\Xi_k(t)_{\xi,j}$ remain within $h$ of $\xi_i,\xi_j$, so by the continuity (Assumption~\ref{assumption_g}(g-i)) and 1-homogeneity (Assumption~\ref{assumption_g}(g-iii)) of $g$,
\begin{align*}
  g_{i,j}(\Xi_k(\tau)_\xi) \leq C h \quad \text{whenever } \Xi_k(\tau)_\xi\in\mathcal{P},\,\tau\in[0,1];
\end{align*}
for $\Xi_k(\tau)_\xi\notin\mathcal{P}$ the zero extension \eqref{zero_G} gives $\partial^2\mathcal{G}=0$ at that point. Combining this with \eqref{eq:G_boundary} and \eqref{DeltaPQ_layer}, we obtain 
\begin{align*}
  \int_0^1 \partial^2_{p_{i,j} p_{i,j}}\mathcal{G}(\Xi_1(\tau ))\,\mathrm d\tau \cdot \bigl(-\Delta P^-_{i,j}\bigr)
  &\leq Ch,
\end{align*}
and similarly for the other three terms in $J_2$. Therefore
\begin{align}\label{I2_layer}
  J_2 \leq Ch \quad \text{on } \mathcal{P}\setminus\mathcal{P}_{3h,e_{i,j}}.
\end{align}

Combining \eqref{eq2bound} and \eqref{I2_layer}, we obtain $J_2 \leq Ch$ on  $\mathcal{P}$. Together with \eqref{eq1} and \eqref{differ1}, we derive \begin{align*}
\frac{\sqrt{\omega_{i,j}}}{h}\bigl(D^-_{e_{i,j}}\mathcal{A}_{i,j} + D^+_{e_{i,j}}\mathcal{B}_{i,j}\bigr)
  = \frac{\sqrt{\omega_{i,j}}}{h}\bigl(\phi(1)-\phi(0)\bigr)
  \leq C
\end{align*}
uniformly in $\xi\in\mathcal{P}$.

\textit{Step 3: Gronwall argument.} Let $\xi^*(t) \in \mathcal{P}$ be a spatial maximum point of 
$\Phi(t,\cdot)$, and define $\beta(t) := \Phi(t,\xi^*(t)) = 
\|w\sigma^{h,\nu,T}(t,\cdot)\|_{L^\infty(\mathcal{P})}$. At the 
maximum point $\xi^*(t)$, the discrete differences satisfy
\[
  D^-_{e_{i,j}}\Phi(\xi^*) \geq 0 
  \quad\text{and}\quad 
  D^+_{e_{i,j}}\Phi(\xi^*) \leq 0.
\]
By the monotonicity of $\mathcal{G}$, we have $\mathcal{A}_{i,j} \leq 0$ 
and $\mathcal{B}_{i,j} \geq 0$, so the transport terms in \eqref{Phi_eq1} 
satisfy
\[
  \mathcal{A}_{i,j}(\xi^*-he_{i,j})\,D^-_{e_{i,j}}\Phi(\xi^*) 
  + \mathcal{B}_{i,j}(\xi^*+he_{i,j})\,D^+_{e_{i,j}}\Phi(\xi^*) \leq 0.
\]
Evaluating \eqref{Phi_eq1} at $\xi^*(t)$ and discarding the non-positive transport terms, we obtain
\begin{align*}
  0 \leq \partial_t\Phi(\xi^*,t) 
  + \sum_{(i,j)\in E}\frac{\sqrt{\omega_{i,j}}}{h}
    \bigl(D^-_{e_{i,j}}\mathcal{A}_{i,j} + D^+_{e_{i,j}}\mathcal{B}_{i,j}\bigr)
    \Phi(\xi^*,t)
  \leq \beta'(t) + C\,\beta(t),
\end{align*}
where the last inequality uses the bound in \textit{Step 2}. 
Applying Gronwall's inequality backward in time from $t = T$ gives
\[
  \beta(t) 
  \leq e^{C(T-t)}\,\beta(T) 
  = e^{C(T-t)}\,\|w\,\nu\|_{L^\infty(\mathcal{P})} 
  \leq C\,\|\nu\|_{L^\infty(\mathcal{P})},
  \quad t \in [0,T],
\]
which finishes the proof. 
\end{proof}

\begin{remark}
The specific structure of the Hamiltonian in
\eqref{H_special1}
is essential in the above proof. 
For a general discrete Hamiltonian $\mathcal{G}$, one computes that for $\phi$ given in \eqref{function_phi}, 
\begin{align*}
\phi'(\tau) &= h  \Big[ \nabla^{e_{i,j}}_\xi  \partial_{p_{i,j}} \mathcal{G} + \sum_{(k,l)\in E}\frac{\sqrt{\omega_{k,l}}}{h}\Big((\partial_{p_{k,l}}\partial_{p_{i,j}} \mathcal{G}) \nabla^{e_{i,j}}_\xi D^+_{e_{k,l}}u^h + (\partial_{q_{k,l}} \partial_{p_{i,j}} \mathcal{G}) \nabla^{e_{i,j}}_\xi D^-_{e_{k,l}}u^h\Big) \Big]_ {\Xi_1(\tau)}\\
& + h\Big[ \nabla^{e_{i,j}}_\xi \partial_{q_{i,j}} \mathcal{G} + \sum_{(k,l)\in E}\frac{\sqrt{\omega_{k,l}}}{h}\Big((\partial_{p_{k,l}}\partial_{q_{i,j}} \mathcal{G}) \nabla^{e_{i,j}}_\xi D^+_{e_{k,l}}u^h + (\partial_{q_{k,l}}\partial_{q_{i,j}} \mathcal{G}) \nabla^{e_{i,j}}_\xi D^-_{e_{k,l}}u^h \Big)\Big] _{\Xi_2(\tau)}.\end{align*}For the cross-edge terms $(k,l) \neq (i,j)$, controlling  the mixed second 
derivatives of $\mathcal{G}$ requires a lower bound on $\nabla^2_{\xi} u^h$, 
which is not available in general. 
This is precisely where the special structure of the discrete Hamiltonian \eqref{H_special1} is used: it decouples the edges and eliminates those cross-edge contributions, making it possible to obtain the estimate in Step 2.
\end{remark}

We next present a useful identity that will be used in the convergence proof.

\begin{lemma}\label{lemma_R+}Let the conditions of Theorem~\ref{thm1} hold, $h\in(0,h_0)$, and let $L^h_t$ be defined in 
\eqref{linearized}. Then the derivative $\partial_h u^h\in\mathcal C((0,T)\times X)$ for  $X\in\{\mathcal{P}_{2h,e_{i,j}},
\mathcal{P}_{h,e_{i,j}}\setminus\mathcal{P}_{2h,e_{i,j}},\mathcal{P}\setminus\mathcal{P}_{h,e_{i,j}}\}$ for any $(i,j)\in E$ with $\partial_h u^h(0,\xi) = 0$, and it satisfies \begin{align}\label{third_eq}
  L^h_t(\partial_h u^h) 
  + \sum_{(i,j)\in E}\frac{\sqrt{\omega_{i,j}}}{h}
    \Bigl(
      \partial_{p_{i,j}}\mathcal{G}\cdot\mathcal{R}^+_{i,j}
      + \partial_{q_{i,j}}\mathcal{G}\cdot\mathcal{R}^-_{i,j}    \Bigr) = 0, 
\end{align}where the consistency remainders $\mathcal{R}^{\pm}_{i,j}$ are given in Proposition \ref{prop_regu}, and $\partial_{p_{i,j}}\mathcal G,\partial_{q_{i,j}}\mathcal G$ are evaluated at $(\cdot,[D^+u^h],[D^-u^h])(t,\xi).$ 
\end{lemma}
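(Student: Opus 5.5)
Differentiate the scheme \eqref{spatial_eq} with respect to the mesh parameter $h$, treating $h$ as a continuous variable. The plan has three steps, ending with a check of the claimed continuity.

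\textbf{Regularity of $h\mapsto u^h$.} For fixed $\xi$, the right-hand side of \eqref{spatial_eq} depends on $h$ through the shifted evaluations $u^h(\xi\pm he_{i,j})$ and the prefactor $1/h$. Viewed as an ODE in the Banach space of bounded functions that are piecewise continuous on the three regions $X$, the vector field is $\mathcal C^1$ in $(h,u)$. This uses Assumption~\ref{ass_G} and the gradient bound of Assumption~\ref{ass:semi-concave}, which keeps the arguments within the region where $\mathcal G$ is Lipschitz, $\|P\|_\infty\vee\|Q\|_\infty\le R_0$. Smooth dependence of ODE solutions on parameters then gives $\partial_h u^h$, continuous on each region $X$; the regions are needed because the stencil crosses $\partial\mathcal P_{h,e_{i,j}}$ there. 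Since $u^h(0,\cdot)=\mathcal U_0$ is independent of $h$, we get $\partial_h u^h(0,\xi)=0$.

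\textbf{Differentiating the flux arguments.} Write $p_{i,j}=\frac{\sqrt{\omega_{i,j}}}{h}\bigl(u^h(\xi+he_{i,j})-u^h(\xi)\bigr)$. The chain rule gives
\[
\partial_h p_{i,j}=\frac{\sqrt{\omega_{i,j}}}{h}\Bigl(D^+_{e_{i,j}}\partial_hu^h(\xi)+\nabla^{e_{i,j}}u^h(\xi+he_{i,j})-\tfrac1hD^+_{e_{i,j}}u^h(\xi)\Bigr)=\frac{\sqrt{\omega_{i,j}}}{h}\bigl(D^+_{e_{i,j}}\partial_hu^h+\mathcal R^+_{i,j}\bigr).
\]
For $q_{i,j}=\frac{\sqrt{\omega_{i,j}}}{h}\bigl(u^h(\xi)-u^h(\xi-he_{i,j})\bigr)$, the $h$-derivative of $-u^h(\xi-he_{i,j})$ is $-\partial_hu^h(\xi-he_{i,j})+\nabla^{e_{i,j}}u^h(\xi-he_{i,j})$, and the derivative of $1/h$ contributes $-\frac1h D^-_{e_{i,j}}u^h$. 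Hence
\[
\partial_h q_{i,j}=\frac{\sqrt{\omega_{i,j}}}{h}\bigl(D^-_{e_{i,j}}\partial_hu^h+\mathcal R^-_{i,j}\bigr).
\]
Next apply the chain rule to $\mathcal G(\xi,[D^\pm u^h])$; $\xi$ does not depend on $h$. Differentiating $\partial_tu^h+\mathcal G+\mathcal F=0$ in $h$, and using $\partial_h\partial_t u^h=\partial_t\partial_h u^h$ from the ODE differentiability, gives exactly $\partial_t\partial_hu^h+\mathcal L_h\partial_hu^h+\sum_{(i,j)\in E}\frac{\sqrt{\omega_{i,j}}}{h}\bigl(\partial_{p_{i,j}}\mathcal G\,\mathcal R^+_{i,j}+\partial_{q_{i,j}}\mathcal G\,\mathcal R^-_{i,j}\bigr)=0$, which is \eqref{third_eq}.

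\textbf{Expected obstacle: boundary layers.} Where $\xi\pm he_{i,j}\notin\mathcal P$, the constant extrapolation makes $D^\pm_{e_{i,j}}u^h\equiv0$ locally in $h$. In that case one must check that the formula remains consistent, with $\mathcal R^\pm_{i,j}$ read through the extrapolated definition. Also, $\mathcal G$ is only differentiable almost everywhere in $(P,Q)$. I would handle this as follows. Use the zero extension \eqref{zero_G} together with the convention that the remainders vanish, or multiply coefficients that vanish, off $\mathcal P^{\rm Int}_{e_{i,j}}$. Then restrict to each open region $X$, where the set of $\xi$ whose stencil leaves $\mathcal P$ is locally constant in $h$; this is why continuity is claimed only on the pieces. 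Finally, interpret the derivatives of $\mathcal G$ via the Lipschitz property \eqref{locally}, which suffices for the linear ODE satisfied by $\partial_hu^h$.
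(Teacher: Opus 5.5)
Your proposal follows the paper's proof. You differentiate the scheme \eqref{spatial_eq} in $h$ on each region $X$, and the chain rule on the shifted difference quotients yields $D^\pm_{e_{i,j}}(\partial_h u^h)$ plus exactly the remainders $\mathcal R^\pm_{i,j}$, giving \eqref{third_eq}; your explicit handling of the backward quotient and of the extrapolated boundary convention is a useful addition that the paper leaves implicit. The one point to tighten is the regularity step: $h\mapsto z(\xi\pm he_{i,j})$ is differentiable only when $z$ is spatially differentiable, so the vector field is not $\mathcal C^1$ in $(h,z)$ on a space of merely piecewise continuous functions, and you should instead invoke, as the paper does, the piecewise spatial $\mathcal C^2$ regularity of $u^h$ from Proposition~\ref{prop_regularity} on each region.
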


\begin{proof} \textit{Step 1: Differentiability of $u^h$ with respect to $h$.}  
By Proposition~\ref{prop_regularity}, $u^h$
is spatially $\mathcal C^2$ on each of the piecewise region $X$. 
On each such region, the map $(h,z)\mapsto \Gamma_h(z):=-\mathcal{G}(\cdot,[D^+z],[D^-z])$ 
is $\mathcal{C}^1$ from $(0,h_0)\times\mathcal{C}^2$ to $\mathcal{C}$,  
since the $h$-derivatives of
$D^+_{e_{i,j}}z(\xi)=z(\xi+he_{i,j})-z(\xi)$ and $D^-_{e_{i,j}}z(\xi)=z(\xi)-z(\xi-he_{i,j})$
exist whenever $z$ is spatially $\mathcal C^1$.
 Hence on each piecewise region 
$X $, we have $\partial_h u^h\in\mathcal C((0,T)\times X),$
with $\partial_h u^h(0,\xi)=0$. 

\textit{Step 2: Derivation of~\eqref{third_eq}.} 
Using the total derivative formula for the forward difference quotient, 
we compute
\begin{align*}
&\quad \frac{\partial}{\partial h}\frac{u^h(\xi+he_{i,j})-u^h(\xi)}{h}\\
&= \frac{1}{h}\bigl[\partial_h u^h(\xi+he_{i,j}) - \partial_h u^h(\xi)\bigr]
 + \frac{\nabla^{e_{i,j}} u^h(\xi+he_{i,j})}{h}
 - \frac{u^h(\xi+he_{i,j})-u^h(\xi)}{h^2}\\
&= \frac{1}{h}D^+_{e_{i,j}}(\partial_h u^h)(\xi) + \frac{1}{h}\mathcal{R}^+_{i,j}(t,\xi),
\end{align*}
where $\mathcal{R}^+_{i,j} 
= \nabla^{e_{i,j}}u^h(\xi+he_{i,j}) - \frac{1}{h}D^+_{e_{i,j}}u^h(\xi)$ and  we have used Lemma~\ref{lemma1} and 
\eqref{def_DtildeU}. 
Differentiating \eqref{spatial_eq} with respect to $h$ and substituting 
the above expression for each edge $(i,j)\in E$ yields
\begin{align*}
  &\partial_h\partial_t u^h(t,\xi) 
  + \sum_{(i,j)\in E}\frac{\sqrt{\omega_{i,j}}}{h}
    \Bigl(
      \partial_{p_{i,j}}\mathcal{G}(\xi,[D^\pm u^h])
      \bigl(D^+_{e_{i,j}}(\partial_h u^h) + \mathcal{R}^+_{i,j}\bigr) \\
  &\hspace{1cm}
      + \partial_{q_{i,j}}\mathcal{G}(\xi,[D^\pm u^h])
      \bigl(D^-_{e_{i,j}}(\partial_h u^h) + \mathcal{R}^-_{i,j}\bigr)
    \Bigr) = 0,
\end{align*}
which is precisely \eqref{third_eq}.
\end{proof}

With Propositions~\ref{lemma5.8} and \ref{bound_sigma}, and Lemma \ref{lemma_R+} in hand, we are now in 
a position to prove the main convergence result.

\begin{proof}[Proof of Theorem \ref{thm1}]
\textit{Step 1: Weighted $L^1$-bound on $\partial_h u^h$.}
We multiply \eqref{third_eq} by $w\,\sigma^{h,\nu,T}$ and integrate over 
$\widetilde{\mathcal{P}} \times [0,T]$ using Lemma~\ref{IBP_identity2}. 
This yields
\begin{align*}
  &\quad \int_{\widetilde{\mathcal{P}}} 
    \partial_h u^h(x,T)\,\nu(x)\,w(x)\,\mathrm{d}x \\
  &= \int_0^T\int_{\widetilde{\mathcal{P}}} 
     \sum_{(i,j)\in E}\frac{\sqrt{\omega_{i,j}}}{h}
     \Bigl(
       \partial_{p_{i,j}}\mathcal{G}(\xi,[D^\pm u^h])\,\mathcal{R}^+_{i,j}(t,\xi) 
       + \partial_{q_{i,j}}\mathcal{G}(\xi,[D^\pm u^h])\,\mathcal{R}^-_{i,j}(t,\xi)
     \Bigr)\times\\
     &\quad 
     \sigma^{h,\nu,T}(x,t)\,w(x)\,\mathrm{d}x\,\mathrm{d}t.
\end{align*}
Since $L^\infty(\widetilde{\mathcal{P}})$  is the dual of 
$L^1(\widetilde{\mathcal{P}})$, we have the duality representation
\[
  \int_{\widetilde{\mathcal{P}}}
  |\partial_h u^h(x,T)|\,w(x)\,\mathrm{d}x 
  = \sup_{\substack{\nu\in L^\infty(\widetilde{\mathcal{P}})\\ \|\nu\|_{L^\infty(\widetilde{\mathcal{P}})}=1}}
    \int_{\widetilde{\mathcal{P}}}
    \partial_h u^h(x,T)\,\nu(x)\,w(x)\,\mathrm{d}x.
\]
By Proposition~\ref{bound_sigma}, 
$\sup_{\|\nu\|_{L^\infty(\mathcal P)}=1}\sup_{t\in[0,T]}\|w\,\sigma^{h,\nu,T}(t,\cdot)\|_{L^\infty(\mathcal P)}
\leq C$. This, together with the gradient bounds from Proposition \ref{prop_regu}, the condition \eqref{growth2},  
and Lemma~\ref{lemma5.8}, we obtain
\begin{align}\label{partial_h_bound}
  \int_{\widetilde{\mathcal{P}}}
  |\partial_h u^h(\xi,T)|\,w(\xi)\,\mathrm{d}\xi \leq C,
\end{align}
where $C$ is independent of $h\in(0,h_0)$ with $h_0$ given in Assumption \ref{ass:semi-concave}.

\textit{Step 2: Cauchy property and $L^1$-convergence.}
For any $0 < h_1 < h_2 \leq h_0$, the fundamental theorem of calculus gives
\[
  u^{h_2}(\xi,T) - u^{h_1}(\xi,T) 
  = \int_{h_1}^{h_2} \partial_s u^s(\xi,T)\,\mathrm{d}s.
\]
Integrating this over $\widetilde{\mathcal{P}}$ with weight $w$ and 
applying \eqref{partial_h_bound}, we obtain
\begin{align}\label{Cauchy_est}
  \int_{\widetilde{\mathcal{P}}}
  |u^{h_2}(\xi,T) - u^{h_1}(\xi,T)|\,w(\xi)\,\mathrm{d}\xi 
  \leq \int_{h_1}^{h_2}
    \int_{\widetilde{\mathcal{P}}}
    |\partial_s u^s(\xi,T)|\,w(\xi)\,\mathrm{d}\xi\,\mathrm{d}s 
  \leq C|h_2-h_1|.
\end{align} 
Hence $\{u^h(T)\}_{h>0}$ is a Cauchy family in 
$L^1_w$, and there exists a unique limit 
$u \in L^1_w$ such that 
$u^h(\cdot,T) \to u(\cdot,T)$ in $L^1_w$ as $h \to 0$.

\textit{Step 3: Identification of the limit as the viscosity solution.}
Since $\mathcal{G}$ satisfies the consistency condition 
$\mathcal{G}(\xi,P,P) = \mathcal{H}(\xi,P)$ for all $\xi \in \mathcal{P}^{\circ}$ 
(see Assumption~\ref{ass_H2}), the scheme $\partial_t u^h + \mathcal{G}(\cdot,[D^\pm u^h]) +\mathcal F= 0$ 
is a consistent, monotone finite difference approximation of the HJE 
\eqref{HJeq}. By the the arguments and uniform convergence result in \cite{CDM25}, one can derive $
\sup_{\xi \in \mathcal{P}^{\circ}}|u^h(T,\xi) - u(T,\xi)|\to0
$ as $h\to0,$ which identifies $u$ as the unique viscosity 
solution of the original HJE.

\textit{Step 4: Error estimate.}
Taking $h_1 \to 0$ in the Cauchy estimate \eqref{Cauchy_est} and letting $h_2=h$,  we obtain 
the desired first-order error bound:
\[
  \int_{\widetilde{\mathcal{P}}}
  |u^h(\xi,T) - u(\xi,T)|\,w(\xi)\,\mathrm{d}\xi \leq Ch,
\]
where $C>0$ is independent of $h$. This completes the proof 
of Theorem~\ref{thm1}.
\end{proof}

\section{Gradient and Semi-concavity Estimates for  Numerical Hamiltonians}\label{sec:examples}

In this section, we first present two families of numerical Hamiltonians
that satisfy Assumptions~\ref{ass_H2}, \ref{ass_G},
and~\ref{ass:semi-concave}. These are all associated with the local Hamiltonian 
$\mathcal{H}_{i,j}(\xi,P) = \frac{1}{2}\mathcal{I}^{-2}(\xi)\,
g_{i,j}(\xi)\,p_{i,j}^2$ for $(i,j)\in E$
(see  Example~\ref{exam1} with $\kappa=2$).
We then verify that these numerical Hamiltonians satisfy the required gradient bounds and semi-concavity estimates in Assumption \ref{ass:semi-concave}.

\subsection{Numerical Hamiltonians}\label{exam:LF}

\textbf{(I) Lax--Friedrichs type.} The Lax--Friedrichs numerical Hamiltonian is defined by
\begin{align}\label{H_special}
  \mathcal{G}^{LF}(\xi,P,Q)
  = \frac{1}{2}\mathcal{I}^{-2}(\xi)
    \sum_{(i,j)\in E} g_{i,j}(\xi)
    \Bigl(\frac{p_{i,j}^2+q_{i,j}^2}{2}
    - \gamma_{i,j}(p_{i,j}-q_{i,j})\Bigr),
\end{align}
where the viscosity coefficients $\gamma_{i,j}>0$ are free parameters of the scheme, to be specified in the verification of Assumption~\ref{ass_H2}(i) below.
Compared with the classical flat-space Lax--Friedrichs
scheme, the viscosity term $-\gamma_{i,j}(p_{i,j}-q_{i,j})$
is placed {inside} the factor
$\mathcal{I}^{-2}(\xi)g_{i,j}(\xi)$, so that
$\mathcal{G}^{LF}(\xi,\cdot,\cdot)$ vanishes naturally
on $\partial\mathcal{P}$ at the same rate as the
continuous Hamiltonian $\mathcal{H}$.
In particular, the zero extension~\eqref{zero_G}
is consistent with \eqref{H_special}.

 \textit{Verification of Assumption~\ref{ass_H2}.} 
The Hamiltonian $\mathcal{G}^{LF}$ is $\mathcal{C}^2$ in all
arguments, and consistent ($\mathcal{G}^{LF}(\xi,P,P)=\mathcal{H}(\xi,P)$). For the monotonicity, take $R_0$ to be the  gradient bound of numerical solution (see Proposition~\ref{prop_regu2}), and choose the 
viscosity coefficient $\gamma_{i,j} = \gamma_{i,j}(R_0) = 2R_0$ for all $(i,j)\in E$. Then for any fixed $R\in(0,R_0]$,  for all $\xi\in\mathcal{P}$ 
and all $(P,Q)$ with $\|P\|_{\infty}\vee\|Q\|_{\infty}\leq R$,
\[
\partial_{p_{i,j}} \mathcal G^{LF} 
= \tfrac{1}{2}\mathcal I^{-2}\, g_{i,j}\,(p_{i,j} - \gamma_{i,j}) \leq 0,
\quad
\partial_{q_{i,j}} \mathcal G^{LF} 
= \tfrac{1}{2}\mathcal I^{-2}\, g_{i,j}\,(\gamma_{i,j}- q_{i,j}) \geq 0,
\]
which verifies Assumption~\ref{ass_H2}(i).


\textit{Verification of Assumption~\ref{ass_G}.}
A direct computation gives that for any fixed $R>0,$
\begin{align*}
  \sup_{(k,l)\in E}
  \Bigl(|\partial_{p_{k,l}}\mathcal{G}^{LF}|
       +\Bigl|\frac{\partial^2\mathcal{G}^{LF}}{\partial\xi_i\partial p_{k,l}}\Bigr|
       +\Bigl|\frac{\partial^2\mathcal{G}^{LF}}{\partial\xi_i\partial q_{k,l}}\Bigr|
  \Bigr)
  &\leq C(1+R),\\
  \sup_{1\leq i,j\leq d}\Bigl|\frac{\partial^2\mathcal{G}^{LF}}{\partial\xi_i\partial\xi_j}\Bigr|
  &\leq C(1+R^2),
\end{align*} 
so \eqref{growth2}--\eqref{growth3} hold with $C(R)=C(1+R^2)$.
Furthermore,
\[\partial^2_{p_{i,j}q_{i,j}}\mathcal{G}^{LF}=0,\quad 
  \partial^2_{p_{i,j}p_{i,j}}\mathcal{G}^{LF}
    = \partial^2_{q_{i,j}q_{i,j}}\mathcal{G}^{LF}
  = \tfrac{1}{2}\mathcal{I}^{-2}(\xi)\,g_{i,j}(\xi),
\]
which verifies \eqref{Gbounded} with $C=\tfrac{1}{2} \sup_{\xi\in\mathcal{P}}
    \mathcal{I}^{-2}(\xi)<\infty.$

\textit{Verification of Assumption \ref{ass:semi-concave}.} 
The gradient and semi-concavity bounds require a more delicate 
argument, as they depend on the structure of numerical schemes and are established simultaneously via 
a bootstrap argument in Proposition~\ref{prop_regu2}. 

\textbf{(II) Osher--Sethian type.}
\label{exam:EO}
The Osher--Sethian numerical Hamiltonian is defined by
\begin{align}\label{H_special3}
  \mathcal{G}^{OS}(\xi,P,Q)
  = \frac12\mathcal{I}^{-2}(\xi)
    \sum_{(i,j)\in E} g_{i,j}(\xi)
    \Bigl(\max(q_{i,j},0)^2 + \min(p_{i,j},0)^2\Bigr).
\end{align}
Unlike the Lax--Friedrichs numerical Hamiltonian \eqref{H_special}, $\mathcal{G}^{OS}$ carries no
explicit viscosity term; dissipation is instead provided intrinsically
by the upwind selection of characteristics. Indeed, when $q_{i,j}\geq 0 
\geq p_{i,j}$, the dissipation per 
edge $(i,j)\in E$ is 
\[
\mathcal{G}^{OS}_{i,j} - \mathcal{H}_{i,j}\big(\xi,\tfrac{p_{i,j}+q_{i,j}}{2}\big)
= \tfrac{1}{2}\mathcal I^{-2}g_{i,j}(\xi)\Big(p_{i,j}^2+q_{i,j}^2 
- \tfrac{(p_{i,j}+q_{i,j})^2}{4}\Big)
= \tfrac{\mathcal I^{-2}(\xi)g_{i,j}(\xi)}{8}(p_{i,j}-q_{i,j})^2 \geq 0,
\]
while the dissipation vanishes when $p_{i,j}$ and $q_{i,j}$ 
have the same sign. 
 
 \textit{Verification of Assumption~\ref{ass_H2}.}
The consistency and monotonicity properties follow directly from the definition. 
The regularity of $\mathcal G^{OS}$ is $\mathcal{C}^2$ in $\xi$ and almost everywhere
$\mathcal{C}^2$ in $(P,Q)$, with the exceptional set
$\{p_{i,j}=0\}\cup\{q_{i,j}=0\}$ being a finite union of
hyperplanes of measure zero.

\textit{Verification of Assumption~\ref{ass_G}.}
Since $\mathcal{G}^{OS}(\xi,\cdot,\cdot)=0$ for
$\xi\in\partial\mathcal{P}$ and $\mathcal{G}^{OS}\in\mathcal{C}^2$
in $\xi$, the bounds \eqref{growth2}--\eqref{growth3} and
\eqref{Gbounded} follow by the same computation as in
Section~\ref{exam:LF}, noting
\[
  \partial^2_{p_{i,j}p_{i,j}}\mathcal{G}^{OS}
  = 2\mathcal{I}^{-2}g_{i,j}\cdot\mathbf{1}_{p_{i,j}<0},
  \quad
  \partial^2_{q_{i,j}q_{i,j}}\mathcal{G}^{OS}
  = 2\mathcal{I}^{-2}g_{i,j}\cdot\mathbf{1}_{q_{i,j}>0},
  \quad
  \partial^2_{p_{i,j}q_{i,j}}\mathcal{G}^{OS} = 0,
\]
all lying in $[0,Cg_{i,j}]$ with $C=2\sup_{\xi\in\mathcal P}\mathcal I^{-2}(\xi)<\infty$ almost everywhere. 

\textit{Verification of Assumption~\ref{ass:semi-concave}.} The verification of Assumption \ref{ass:semi-concave} is given in Proposition \ref{prop_regu2}.

\subsection{Gradient and semi-concavity  estimates}
This subsection aims to prove the following proposition, which establishes gradient and semi-concavity bounds
for each of the two numerical Hamiltonians introduced in
Sections~\ref{exam:LF}. 
\begin{proposition}\label{prop_regu2}Let $\mathcal{G}$ be one of the numerical Hamiltonians defined in
\eqref{H_special} or \eqref{H_special3}, and let $g$ be given
by Example~\ref{ex_g}. Then there exist constants $h_0\in(0,\frac1d)$, $R_0>0$, 
$K_0>0$, depending only on 
$T, \|\mathcal{U}_0\|_{\mathcal{C}^2(\mathcal{P})},\|\mathcal F\|_{\mathcal C^2(\mathcal P)}$ and $G$, such that 
for all $h\in(0,h_0)$ and $t\in[0,T]$,
\begin{enumerate}
\item[(i)] $\displaystyle 
   \sup_{\xi\in\mathcal{P}}\max_{(i,j)\in E}
   |\nabla^{e_{i,j}}u^h(t,\xi)| \leq R_0$;
\item[(ii)] $\displaystyle 
   \sup_{\mathbf{a}\in\mathbb{V}}\sup_{\xi\in\mathcal{P}}
   \mathbf{a}^\top\nabla^2 u^h(t,\xi)\,\mathbf{a} \leq K_0$.
\end{enumerate}
\end{proposition}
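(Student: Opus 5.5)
The proof will be a continuity (stopping-time) bootstrap on $[0,T]$ that establishes (i) and (ii) together. First, using Proposition~\ref{prop_regularity} and $\mathcal U_0\in\mathcal C^2(\mathcal P)$, I set $R_*:=\max_{(i,j)}\|\nabla^{e_{i,j}}\mathcal U_0\|_\infty$ and $K_*:=\sup_{\mathbf a\in\mathbb V}\|\mathbf a^\top\nabla^2\mathcal U_0\mathbf a\|_\infty$. I then take candidate constants $R_0=2R_*+1$ and $K_0=2K_*+1$, fixing them only at the end. With these, I define
\[
\tau_h:=\sup\Bigl\{t\in[0,T]:\ \max_{(i,j)}\|\nabla^{e_{i,j}}u^h(s)\|_\infty\le R_0,\ \sup_{\mathbf a}\mathbf a^\top\nabla^2u^h(s)\mathbf a\le K_0\ \ \forall s\le t\Bigr\}.
\]
By continuity in time, $\tau_h>0$. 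On $[0,\tau_h]$ the arguments $[D^\pm u^h]/h$ stay in the ball of radius $R_0$, by \eqref{M3}. Hence monotonicity (Assumption~\ref{ass_H2}(i)) is available with $\gamma_{i,j}=2R_0$ for $\mathcal G^{LF}$, and it holds automatically for $\mathcal G^{OS}$. The bounds of Assumption~\ref{ass_G} hold there with constants $C(R_0)$. The goal is to show that on $[0,\tau_h]$ the strict improvements $\le R_0-\tfrac12$ and $\le K_0-\tfrac12$ hold, possibly after shrinking $T$ into finitely many sub-intervals or choosing $R_0$ and $K_0$ suitably. This forces $\tau_h=T$.

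\emph{Gradient step.} For a fixed edge $(k,l)$ I differentiate \eqref{spatial_eq} along $e_{k,l}$ to get an equation for $v:=\nabla^{e_{k,l}}u^h$:
\[
\partial_t v+\mathcal L_h v+\nabla^{e_{k,l}}_\xi\mathcal G(\xi,[D^\pm u^h])+\nabla^{e_{k,l}}\mathcal F=0,
\]
where $\mathcal L_h$ is the linearized operator from \eqref{linearized}. The differentiation commutes with the shifts $\xi\pm he_{i,j}$ on the pieces $\mathcal P_{h,e_{i,j}}$. On the boundary layers the constant extrapolation gives $D^\pm=0$, so those terms simply drop out. Because $\mathcal L_h$ satisfies the discrete maximum principle (the argument of \eqref{positivity_v}--\eqref{max_v}), I obtain $\|v(t)\|_\infty\le R_*+t\,(C(1+R_0^2)+\|\mathcal F\|_{\mathcal C^1})$. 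The $\xi$-derivative of $\tfrac12\mathcal I^{-2}g_{i,j}p^2$ is bounded because $\mathcal I^{-2}$ vanishes quadratically at $\partial\mathcal P$ and $g$ is Lipschitz for $g_1$. For the harmonic and logarithmic means, $\partial_\xi g$ is bounded away from the diagonal degeneracy as well, so $\mathcal I^{-2}\partial g$ stays bounded. This bound closes on a short time interval $t_1\sim(1+R_0^2)^{-1}$. To reach all of $T$, I would iterate on consecutive intervals, or, better, first obtain an $R$-independent bound from a Lipschitz comparison with $u^h(\cdot+he_{k,l})$ using the structure $\mathcal G(\xi,\lambda P,\lambda Q)$, and then fix $R_0$ from that bound.

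\emph{Semi-concavity step (main obstacle).} For $\mathbf a\in\mathbb V$, differentiating twice gives, for $W:=\mathbf a^\top\nabla^2u^h\mathbf a$,
\[
\partial_tW+\mathcal L_hW+\sum_{(i,j)}\frac{\omega_{i,j}}{h^2}\Bigl(\partial^2_{pp}\mathcal G\,(\partial_{\mathbf a}D^+u^h)^2+\partial^2_{qq}\mathcal G\,(\partial_{\mathbf a}D^-u^h)^2\Bigr)+2\,\partial^2_{\xi\,p}\mathcal G\,\frac{\partial_{\mathbf a}D^\pm u^h}{h}+\partial^2_{\xi\xi}\mathcal G+\partial^2_{\mathbf a\mathbf a}\mathcal F=0.
\]
By \eqref{Gbounded}, the quadratic term is nonnegative, so it helps when bounding $W$ from above. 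The $\xi\xi$ term is bounded by \eqref{growth3}. The hard term is the mixed term, because $\partial_{\mathbf a}D^\pm u^h/h$ is a second-order quantity with no a priori lower bound. I would handle it with Young's inequality against the convex term. This requires $|\partial^2_{\xi p}\mathcal G|^2\le C\,\partial^2_{pp}\mathcal G$, that is, $|\partial_\xi(\mathcal I^{-2}g_{i,j})|^2\lesssim\mathcal I^{-2}g_{i,j}$, which I would verify for $g_1,g_2,g_3$ using the $\mathcal I^{-2}$ factor, and where the degeneracy must be checked carefully. The edgewise decomposition \eqref{H_special1} is what makes the mixed term and the convex term refer to the same edge, so the absorption is possible. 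After absorption, the maximum principle for $\mathcal L_h$ yields $\sup W(t)\le K_*+Ct$. Combining this with the gradient step and choosing $h_0$ small enough that the boundary-layer contributions, which are of order $g_{i,j}\lesssim h$ as in Case~B of Proposition~\ref{bound_sigma}, are absorbed, closes the bootstrap.
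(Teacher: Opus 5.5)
Your semi-concavity step is essentially the paper's (Step~2 with Lemma~\ref{lemma_G}). Young's inequality absorbs the mixed $\xi p$ term into the edgewise convex term and gives $\sup_{\mathbf a}\mathbf a^\top\nabla^2u^h\mathbf a\le K_*+C(1+R_0^2)t$. So $K_0$ must be fixed after $R_0$ and must depend on $T$; your candidate $K_0=2K_*+1$ cannot survive.

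The genuine gap is the gradient step. The maximum principle for $v=\nabla^{e_{k,l}}u^h$ with source $|\nabla^{e_{k,l}}_\xi\mathcal G|\le C(1+R_0^2)$ only yields $\|v(t)\|_\infty\le R_*+Ct(1+R_0^2)$. The condition $R_*+CT(1+R_0^2)<R_0$ has no solution $R_0$ once $T$ exceeds a threshold. Iterating on subintervals cannot fix this, because the resulting bound obeys $\dot R\lesssim 1+R^2$ and blows up in finite time. The comparison with $u^h(\cdot+he_{k,l})$ gives nothing better. The shifted function solves the scheme with $\mathcal G(\xi+he_{k,l},\cdot)$, and with a different extrapolation pattern near $\partial\mathcal P$. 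The comparison error is therefore again $Ch(1+R_0^2)$ per unit time, and the $(P,Q)$-scaling structure does not remove this explicit $\xi$-dependence. Note also that your gradient bound involves neither $h$ nor $K_0$, so the closing ``choose $h_0$ small'' has nothing to act on. In your scheme (ii) depends on (i) but not conversely, which only gives a short-time result.

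The missing idea is the coupling in Step~1 of the paper.

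\textbf{Time-derivative bound.} Because the scheme is autonomous, $L^h_t(\partial_tu^h)=0$ and $L^h_t\tfrac12(\partial_tu^h)^2\le0$. The adjoint/maximum principle then gives $|\partial_tu^h|\le\sup_\xi|\partial_tu^h(0,\xi)|$, hence $|\mathcal G(\xi,[D^\pm u^h])|\le C_1$ with $C_1$ depending only on the data. (The viscosity part contributes only $O(R_0h)$ at $t=0$.)

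\textbf{Lax--Friedrichs case.} For $\mathcal G^{LF}$, write $\tfrac14\mathcal I^{-2}\sum g_{k,l}(p_{k,l}^2+q_{k,l}^2)=\mathcal G+\mathcal I^{-2}\sum g_{k,l}\gamma_{k,l}s_{k,l}$ with $s_{k,l}=\tfrac12(p_{k,l}-q_{k,l})$. Semi-concavity gives $s_{k,l}\le CK_0h$ on $\mathcal P_{2h,e_{k,l}}$; only the $O(h)$ layer retains the crude bound $CR_0$. This is \eqref{C_5}. Away from the layer, the quadratic part, and hence $\partial_\xi\mathcal G$, is of size $C(1+R_0K_0h)$ rather than $CR_0^2$. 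The estimate through \eqref{fourth_eq} then becomes $\tfrac14(\partial_{\xi_k}u^h)^2\le C_4(1+R_0K_0h+R_0^2h)^2$ with $C_4$ independent of $R_0$.

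\textbf{Closing the bootstrap.} Choose $R_0$ large, then $K_0=K_0(R_0,T)$ from the Hessian step, then $h_0$ with $R_0K_0h\vee R_0^2h\le R_0^{1-\zeta}$. This closes the bootstrap for every $T$.

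\textbf{Osher--Sethian case.} For $\mathcal G^{OS}$ it is simpler still. $\mathcal G^{OS}$ is a sum of nonnegative edge terms, so $|\mathcal G^{OS}|\le C_1$ bounds each $\mathcal I^{-2}g_{k,l}(\min(p_{k,l},0)^2+\max(q_{k,l},0)^2)$ directly.

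Without some such $R_0$-independent control of the size of $\mathcal G$, in which semi-concavity neutralizes the viscosity term, your bootstrap does not reach $[0,T]$.
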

As a result, Assumption~\ref{ass:semi-concave} holds with 
$C = \max(R_0, K_0)$. Moreover, the 
gradient bound (i) confines the difference matrices 
$[D^{\pm}u^h]$ to the ball $\{\|P\|_{\infty}\leq R_0\}$, 
on which the monotonicity of $\mathcal{G}$ required in  Assumption \ref{ass_H2}(i) 
holds.  
The proof of Proposition~\ref{prop_regu2} relies on a duality argument and on the following differential identities.

\begin{lemma}\label{lemma_first}Let $L^h_t$ be defined in 
\eqref{linearized}. Then the following identities hold for a.e.\ $\xi\in\mathcal{P}$:  
\begin{align}
 \mathrm{(i)} & \quad
  L^h_t\Big(\frac{\partial u^h}{\partial\xi_k}\Big) 
  + \frac{\partial\mathcal{G}}{\partial\xi_k} +\frac{\partial\mathcal{F}}{\partial\xi_k} = 0,
  \quad k = 1,2,\ldots,d; 
  \label{first_eq}\\
  \mathrm{(ii)} & \quad
  L^h_t\Big(\frac{1}{2}\big(\frac{\partial u^h}{\partial\xi_k}\big)^{\!2}\Big) 
  + \sum_{(i,j)\in E}\frac{\sqrt{\omega_{i,j}}}{2h}
    \Big(
      -\partial_{p_{i,j}}\mathcal{G}\big(D^+_{e_{i,j}}\frac{\partial u^h}{\partial\xi_k}\big)^{\!2}
      + \partial_{q_{i,j}}\mathcal{G}\big(D^-_{e_{i,j}}\frac{\partial  u^h}{\partial \xi_k}\big)^{\!2}
    \Big)\notag\\
    &\quad 
  + \frac{\partial \mathcal{G}}{\partial \xi_k}\frac{\partial  u^h}{\partial \xi_k}+\frac{\partial \mathcal{F}}{\partial \xi_k}\frac{\partial  u^h}{\partial \xi_k} = 0.
  \label{fourth_eq}\\
   \mathrm{(iii)}&\quad   L^h_t\mathcal{V}_{i,j} 
  + \mathcal{Q}_{i,j} + \mathcal{M}_{i,j} 
  + \frac{\partial ^2\mathcal{G}}{\partial \xi_i\partial \xi_j} = 0,\text{ with Hessian component }\mathcal{V}_{i,j} := \frac{\partial ^2 u^h}{\partial \xi_i\partial \xi_j},\label{hessian_evo}
\end{align}where the quadratic term $\mathcal{Q}_{i,j}$ 
\begin{align}\label{def_Q}
  \mathcal{Q}_{i,j} 
  &:= \sum_{(k,l)\in E}\frac{\omega_{k,l}}{h^2}
    \Bigl[
      \partial^2_{p_{k,l}}\mathcal{G}\,
      (D^+_{e_{k,l}}\partial_{\xi_i}u^h)(D^+_{e_{k,l}}\partial_{\xi_j}u^h) 
      + \partial^2_{q_{k,l}}\mathcal{G}\,
      (D^-_{e_{k,l}}\partial_{\xi_i}u^h)(D^-_{e_{k,l}}\partial_{\xi_j}u^h) \notag\\
    &\hspace{2.5cm}
      + 2\partial_{p_{k,l}}\partial_{q_{k,l}}\mathcal{G}\,
      (D^+_{e_{k,l}}\partial_{\xi_i}u^h)(D^-_{e_{k,l}}\partial_{\xi_j}u^h)
    \Bigr],
\end{align}
and the mixed term $\mathcal{M}_{i,j}$
\begin{align}\label{def_M}
  \mathcal{M}_{i,j} 
  &:= \sum_{(k,l)\in E}\frac{\sqrt{\omega_{k,l}}}{h}
    \Bigl[
      \frac{\partial ^2\mathcal{G}}{\partial \xi_i\,\partial p_{k,l}}
      D^+_{e_{k,l}}\partial_{\xi_j}u^h 
      + \frac{\partial ^2\mathcal{G}}{\partial \xi_j\,\partial  p_{k,l}}
      D^+_{e_{k,l}}\partial_{\xi_i}u^h \notag\\
    &\hspace{2.5cm}
      + \frac{\partial ^2\mathcal{G}}{\partial \xi_i\,\partial  q_{k,l}}
      D^-_{e_{k,l}}\partial_{\xi_j}u^h 
      + \frac{\partial ^2\mathcal{G}}{\partial \xi_j\,\partial  q_{k,l}}
      D^-_{e_{k,l}}\partial_{\xi_i}u^h
    \Bigr].
\end{align}
\end{lemma}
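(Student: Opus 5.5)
We will obtain all three identities by differentiating the scheme \eqref{spatial_eq} in $\xi$ (once for (i), twice for (iii)) and then regrouping the result in terms of $L^h_t$ from \eqref{linearized}. We work on each of the regions $\mathcal P_{2h,e_{i,j}}$, $\mathcal P_{h,e_{i,j}}\setminus\mathcal P_{2h,e_{i,j}}$, $\mathcal P\setminus\mathcal P_{h,e_{i,j}}$. By Proposition~\ref{prop_regularity}, $u^h$ has first derivatives and (directional) second derivatives there. The interfaces have measure zero, which is why the identities are stated only for a.e.\ $\xi$. For the Osher--Sethian case, $\mathcal G$ is twice differentiable in $(P,Q)$ only a.e. (Assumption~\ref{ass_G}), so the chain rule is applied at points where $[D^\pm u^h]$ avoids the exceptional hyperplanes, or in the Lipschitz/a.e.\ sense.

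\textbf{Step 1: identity (i).} Since $D^\pm_{e_{i,j}}$ is a translation difference, it commutes with $\partial_{\xi_k}$: we have $\partial_{\xi_k}D^\pm_{e_{i,j}}u^h=D^\pm_{e_{i,j}}\partial_{\xi_k}u^h$ wherever both shifted points lie in the same smooth region. In the boundary layer, constant extrapolation gives $D^\pm u^h=0$, and its $\xi$-derivative is also $0$, which is consistent with $D^\pm$ of $\partial_{\xi_k}u^h$ under the same extrapolation convention. The chain rule applied to $\mathcal G(\xi,[D^+u^h],[D^-u^h])$ then gives
\[
\partial_{\xi_k}\big[\mathcal G(\xi,[D^\pm u^h])\big]=\frac{\partial\mathcal G}{\partial\xi_k}+\sum_{(i,j)\in E}\frac{\sqrt{\omega_{i,j}}}{h}\Big(\partial_{p_{i,j}}\mathcal G\,D^+_{e_{i,j}}\partial_{\xi_k}u^h+\partial_{q_{i,j}}\mathcal G\,D^-_{e_{i,j}}\partial_{\xi_k}u^h\Big).
\]
Adding $\partial_t\partial_{\xi_k}u^h$ and $\partial_{\xi_k}\mathcal F$ yields \eqref{first_eq}.

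\textbf{Step 2: identity (ii).} Multiply \eqref{first_eq} by $v:=\partial_{\xi_k}u^h$. For $\partial_t$ we have $v\,\partial_t v=\partial_t(\tfrac12 v^2)$. For the difference terms we use the elementary identities
\[
vD^+v=D^+(\tfrac12 v^2)-\tfrac12(D^+v)^2,\qquad vD^-v=D^-(\tfrac12 v^2)+\tfrac12(D^-v)^2,
\]
which hold because $v(\xi+he)^2-v(\xi)^2=2v(\xi)D^+v+(D^+v)^2$, and similarly for $D^-$. Substituting these gives $L^h_t(\tfrac12 v^2)$ together with the quadratic correction terms. Their signs match \eqref{fourth_eq}.

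\textbf{Step 3: identity (iii).} Differentiate the expanded form of (i) with respect to $\xi_j$, taking $k=i$. The term $\partial_{\xi_j}\partial_t\partial_{\xi_i}u^h$ combined with $\partial_{p}\mathcal G\,D^+\partial_{\xi_j}\partial_{\xi_i}u^h+\partial_q\mathcal G\,D^-\partial_{\xi_j}\partial_{\xi_i}u^h$ assembles into $L^h_t\mathcal V_{i,j}$. The remaining terms come from differentiating the coefficients $\partial_{p_{k,l}}\mathcal G,\partial_{q_{k,l}}\mathcal G$ and the source $\partial_{\xi_i}\mathcal G$. Their $\xi_j$-explicit derivatives produce $\partial^2_{\xi_j p}\mathcal G\,D^+\partial_{\xi_i}u^h$ and its $q$-analogue. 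Their dependence through $[D^\pm u^h]$ produces second derivatives in $(P,Q)$ multiplied by products of $D^\pm\partial_{\xi_i}u^h$ and $D^\pm\partial_{\xi_j}u^h$. Finally, $\partial_{\xi_j}(\partial\mathcal G/\partial\xi_i)$ produces $\partial^2_{\xi_i\xi_j}\mathcal G$ plus $\partial^2_{\xi_i p}\mathcal G\,D^+\partial_{\xi_j}u^h$ and its $q$-analogue. Collecting the mixed $\xi$--$(P,Q)$ terms gives $\mathcal M_{i,j}$ in \eqref{def_M}. For the pure $(P,Q)$ second derivatives, the edge-wise structure \eqref{H_special1} (true for both $\mathcal G^{LF}$ and $\mathcal G^{OS}$) kills the cross terms between distinct edges. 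What remains is $\partial^2_{p_{k,l}}$, $\partial^2_{q_{k,l}}$ and the mixed $\partial_{p_{k,l}}\partial_{q_{k,l}}$ terms; the two mixed contributions are symmetrized into the factor $2$, giving $\mathcal Q_{i,j}$ in \eqref{def_Q}. The $\partial^2_{\xi_i\xi_j}\mathcal F$ term should also appear alongside $\partial^2_{\xi_i\xi_j}\mathcal G$; I would carry it along in the same way.

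\textbf{Main obstacle.} The delicate step is justifying that $\partial_\xi$ commutes with $D^\pm_{e_{k,l}}$ across the interfaces $\partial\mathcal P_{h,e_{k,l}}$ and $\partial\mathcal P_{2h,e_{k,l}}$. It must also be shown that the constant-extrapolation convention is preserved under differentiation, so that the identities hold a.e.\ and not only in $\mathcal P_{2h}$. I would handle this region by region using the piecewise continuity in Proposition~\ref{prop_regularity}, discarding the measure-zero interfaces. A secondary bookkeeping point is the symmetrization of the mixed $pq$ term in $\mathcal Q_{i,j}$, where one should check that it is consistent with the a.e.\ second derivatives used for the Osher--Sethian case.
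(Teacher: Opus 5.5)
Your proposal is correct and follows the paper's proof. The paper likewise differentiates the scheme once for (i), multiplies by $\partial_{\xi_k}u^h$ and uses $D^{\pm}(v^2)=2vD^{\pm}v\pm(D^{\pm}v)^2$ for (ii), and differentiates twice for (iii); your explicit use of the edge decoupling and your inclusion of the missing $\partial^2_{\xi_i\xi_j}\mathcal F$ term are more careful than the paper's one-line argument.

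One step to tighten is the claimed symmetrization into the factor $2$ in $\mathcal Q_{i,j}$. Differentiation actually yields $\partial^2_{p_{k,l}q_{k,l}}\mathcal G\,\big[(D^+_{e_{k,l}}\partial_{\xi_i}u^h)(D^-_{e_{k,l}}\partial_{\xi_j}u^h)+(D^-_{e_{k,l}}\partial_{\xi_i}u^h)(D^+_{e_{k,l}}\partial_{\xi_j}u^h)\big]$. This equals the stated $2\partial_{p_{k,l}}\partial_{q_{k,l}}\mathcal G\,(D^+_{e_{k,l}}\partial_{\xi_i}u^h)(D^-_{e_{k,l}}\partial_{\xi_j}u^h)$ only in two cases: after contracting with $\mathbf a$, which is the only form used later in Lemma~\ref{lemma_G} and Proposition~\ref{prop_regu2}, or because $\partial^2_{p_{k,l}q_{k,l}}\mathcal G\equiv 0$ for both $\mathcal G^{LF}$ and $\mathcal G^{OS}$. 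This imprecision comes from the statement itself, not from your approach.
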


\begin{proof}
\textit{(i) Proof of \eqref{first_eq}.} 
Differentiating \eqref{spatial_eq} with respect to $\xi_k$ directly yields 
\eqref{first_eq}.

\medskip
\textit{(ii) Proof of \eqref{fourth_eq}.}
Multiplying \eqref{first_eq} by $\phi:=\frac{\partial}{\partial \xi} u^h$ and taking into account of 
\begin{align*}
&\quad D^+_{e_{i,j}}\phi^2(\xi)=(\phi^2(\xi+he_{i,j})-\phi^2(\xi))\\&=2\phi(\xi)(\phi(\xi+he_{i,j})-\phi(\xi))+\big(\phi(\xi+he_{i,j})-\phi(\xi)\big)^2\\&
=2 \phi(\xi) D^+_{e_{i,j}}\phi(\xi)+\big(D^+_{e_{i,j}}\phi(\xi)\big)^2,
\end{align*} and similarly, 
\begin{align*}
&\quad D^-_{e_{i,j}}\phi^2(\xi)
=2 \phi(\xi) D^-_{e_{i,j}}\phi(\xi)-\big(D^-_{e_{i,j}}\phi(\xi)\big)^2,
\end{align*}  
we derive \eqref{fourth_eq}.

\textit{Proof of \eqref{hessian_evo}.}
 Differentiating \eqref{spatial_eq} twice with respect to $\xi_i$ and $\xi_j$ 
successively yields the equation \eqref{hessian_evo}. 
The proof is finished. 
\end{proof}
The following lower bound estimate plays an important role in verifying the semi-concavity lower bound.  
The proof is postponed to Appendix \ref{app1}. 
\begin{lemma}\label{lemma_G}Let $\mathcal{G}$ be one of the numerical Hamiltonians defined in \eqref{H_special} or \eqref{H_special3}, let $g$ be given by 
Example \ref{ex_g}, let
$\mathbf{a}\in\mathbb{V}$ be a unit tangent vector, and let $R>0$. 
Then there exists a constant $C>0$ depending only on the graph $G$ and metric tensor $g$ such that,
for almost every $(\xi,P,Q)\in
\mathcal{P}\times\mathbb{R}^{\frac{d^2-d}{2}}\times\mathbb{R}^{\frac{d^2-d}{2}}$, when $\|P\|_{\infty}\vee \|Q\|_{\infty}\leq R$,   
\[
\mathbf a^{\top}(\mathcal Q+\mathcal M+\nabla^2_{\xi}\mathcal G)\mathbf a\ge -CR^2,
\]where  $\mathcal{Q}=(\mathcal Q_{i,j})_{1\leq i,j\leq d},\mathcal{M}=(\mathcal M_{i,j})_{1\leq i,j\leq d}$ are given in \eqref{def_Q} and \eqref{def_M}, respectively. 
\end{lemma}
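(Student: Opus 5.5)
The plan is to expand $\mathbf a^{\top}(\mathcal Q+\mathcal M+\nabla^2_\xi\mathcal G)\mathbf a$ edge by edge and show that the only possibly negative contributions are absorbed by the nonnegative quadratic term $\mathcal Q$, up to an $\mathcal O(R^2)$ remainder. Fix $\mathbf a\in\mathbb V$ and write $X_{k,l}:=\frac{\sqrt{\omega_{k,l}}}{h}D^+_{e_{k,l}}(\mathbf a\cdot\nabla_\xi u^h)$ and $Y_{k,l}:=\frac{\sqrt{\omega_{k,l}}}{h}D^-_{e_{k,l}}(\mathbf a\cdot\nabla_\xi u^h)$; these are the variations of $p_{k,l},q_{k,l}$ in direction $\mathbf a$. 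By bilinearity,
\[
\mathbf a^{\top}\mathcal Q\mathbf a=\sum_{(k,l)\in E}\Big(\partial^2_{p_{k,l}}\mathcal G\,X_{k,l}^2+\partial^2_{q_{k,l}}\mathcal G\,Y_{k,l}^2+2\partial^2_{p_{k,l}q_{k,l}}\mathcal G\,X_{k,l}Y_{k,l}\Big),
\]
and $\mathbf a^{\top}\mathcal M\mathbf a=2\sum_{(k,l)}\big((\mathbf a\cdot\nabla_\xi\partial_{p_{k,l}}\mathcal G)X_{k,l}+(\mathbf a\cdot\nabla_\xi\partial_{q_{k,l}}\mathcal G)Y_{k,l}\big)$. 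For both schemes \eqref{H_special} and \eqref{H_special3} the mixed $pq$ derivative vanishes. The pure second derivatives are $c\,\mathcal I^{-2}g_{k,l}$ times an indicator (identically one for Lax--Friedrichs, $\mathbf 1_{p<0}$ or $\mathbf 1_{q>0}$ for Osher--Sethian).

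The key step is to complete the square per edge. For Lax--Friedrichs, $\partial_{p_{k,l}}\mathcal G=\tfrac12 m_{k,l}(\xi)(p_{k,l}-\gamma_{k,l})$ with $m_{k,l}:=\mathcal I^{-2}g_{k,l}$. Hence $\mathbf a\cdot\nabla_\xi\partial_{p_{k,l}}\mathcal G=\tfrac12(\mathbf a\cdot\nabla m_{k,l})(p_{k,l}-\gamma_{k,l})$, and its $X_{k,l}$-contribution is bounded below by
\[
\tfrac12 m_{k,l}X_{k,l}^2-|\mathbf a\cdot\nabla m_{k,l}|\,|p_{k,l}-\gamma_{k,l}|\,|X_{k,l}|\ \ge\ -\frac{|\nabla m_{k,l}|^2}{2m_{k,l}}(p_{k,l}-\gamma_{k,l})^2 .
\]
The $Y$ term and the Osher--Sethian case are handled the same way. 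In the OS case $\partial_p\mathcal G=m\min(p,0)$ vanishes exactly where the indicator in $\partial^2_p\mathcal G$ vanishes, so the completion is still legitimate. Since $|p|,|q|\le R$ and $\gamma_{k,l}=2R_0$, it then suffices to show that $|\nabla_\xi m_{k,l}|^2/m_{k,l}$ is bounded on $\mathcal P^\circ$. The constant $\gamma$ is absorbed, since $R_0$ is fixed and comparable to $R$.

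I expect the bound on $|\nabla_\xi m_{k,l}|^2/m_{k,l}$ to be the main obstacle. It is a Fisher-information-type estimate for $m=\mathcal I^{-2}g_{k,l}$, and it is where the specific choices of $g$ in Example~\ref{ex_g} and of the factor $\mathcal I^{-2}$ enter. My approach is to write $\mathcal I^{-1}=\big(\prod_r\xi_r\big)/\sum_s\prod_{r\neq s}\xi_r$, so that $\mathcal I^{-2}$ vanishes quadratically at $\partial\mathcal P$ and $|\nabla\mathcal I^{-2}|\lesssim\mathcal I^{-1}$. Thus $|\nabla(\mathcal I^{-2})|^2/\mathcal I^{-2}$ is bounded. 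For $g$, I use concavity and 1-homogeneity: $g\le \xi_k\vee\xi_l$, and I would check $|\nabla g|^2/g\lesssim \mathcal I^{-1}$-weighted bounds for $g_1,g_2,g_3$. For $g_3$ the gradient blows up only like $(\xi_k/\xi_l)^2$, but the factor $\mathcal I^{-2}\le\xi_l^2$ compensates. If the direct computation for $g_2$ proves messy, I would use $\partial_t g_2\le C(1+\log(r/t))$ together with the same compensation by $\mathcal I^{-2}$.

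Finally, I bound $\mathbf a^{\top}\nabla^2_\xi\mathcal G\mathbf a$ by $C\sup|\nabla^2 m_{k,l}|\,(R^2+\gamma R)$. The required bound on second derivatives of $\mathcal I^{-2}g_{k,l}$ again holds because $\mathcal I^{-2}$ kills the boundary singularities of $\nabla^2 g$. The exceptional measure-zero set where OS is not twice differentiable is discarded, which gives the claim for a.e.\ $(\xi,P,Q)$.
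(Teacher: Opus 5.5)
Your proposal follows essentially the paper's route. You expand $\mathbf a^{\top}(\mathcal Q+\mathcal M)\mathbf a$ edge by edge and absorb $\mathcal M$ into $\mathcal Q$; you complete the square where the paper uses Young's inequality with a small parameter, and both need boundedness of $|\nabla_\xi(\mathcal I^{-2}g_{k,l})|^2/(\mathcal I^{-2}g_{k,l})$. You then bound $\mathbf a^{\top}\nabla^2_\xi\mathcal G\,\mathbf a$ through bounded second derivatives of $\mathcal I^{-2}g_{k,l}$. Your explicit tracking of $\gamma_{k,l}=2R_0$ (the bound is really $-C(R+R_0)^2$, fine for $R=R_0$ as used) and of the Osher--Sethian indicator is more careful than the paper, which suppresses $\gamma_{k,l}$.

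One correction in the step you flagged. $\nabla g_3$ is in fact bounded (e.g.\ $\partial_t g_3=2r^2/(t+r)^2\le 2$), so your stated blow-up rate for $g_3$ is wrong. Also $\partial_t g_2\sim r/(t\log^2(r/t))$ as $t\to 0$, so your fallback bound for $g_2$ is false. The needed estimate nonetheless holds for any nonnegative concave $1$-homogeneous $g$: such $g$ is nondecreasing in each variable, so Euler's identity gives $0\le\partial_{\xi_k}g_{k,l}\le g_{k,l}/\xi_k$. Combined with $\mathcal I^{-1}\le\xi_k\wedge\xi_l$, this yields $\mathcal I^{-2}|\nabla_\xi g_{k,l}|^2/g_{k,l}\le 2g_{k,l}\le 2$.
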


With these preliminaries in place, we now prove Proposition \ref{prop_regu2}.

\begin{proof}[Proof of Proposition \ref{prop_regu2}]
We first present the  the proof in detail for the Lax--Friedrichs Hamiltonian
\eqref{H_special}. 
(i) and (ii) are established jointly via a bootstrap argument in 
Steps~0--3.  
Throughout the proof, we assume  $\mathcal{F} = 0$ for notational simplicity. The general case  $\mathcal{F} \in \mathcal{C}^2(\mathcal{P})$ follows by a similar argument since $\mathcal{F}$ is independent of $u^h$.  

\medskip
\textbf{Step 0: Bootstrap setup.} Since $u^h \in \mathcal{C}([0,T]\times\mathcal{P}(G))$ with piecewise 
spatial $\mathcal{C}^2$-regularity by Proposition~\ref{prop_regularity}, 
the quantities
\begin{align*}
  M_1(t) &:= \sup_{\xi\in\mathcal{P}}
             \max_{(i,j)\in E}|\nabla^{e_{i,j}}u^h(t,\xi)|, \quad   M_2(t) := \sup_{\mathbf{a}\in\mathbb{V}}
             \sup_{\xi\in\mathcal{P}}
             \mathbf{a}^\top\nabla^2 u^h(t,\xi)\,\mathbf{a}
\end{align*}
are continuous in $t$, with $M_1(0) \vee M_2(0) \leq C_0$ for some constant 
$C_0 = C_0(\|\mathcal{U}_0\|_{\mathcal{C}^2(\mathcal{P})})$.
Moreover, it follows from \eqref{M3} that   
\begin{align*}  \sup_{\xi\in\mathcal{P}}
  \max_{(i,j)\in E}
  \Big(
    \frac{|D^+_{e_{i,j}}u^h|}{h} \vee \frac{|D^-_{e_{i,j}}u^h|}{h}
  \Big)
  \leq M_1(t).
\end{align*}

Choose constants $R_0, K_0$, depending only on 
$ T, \|\mathcal{U}_0\|_{\mathcal{C}^2(\mathcal{P})}$ (their precise values will be fixed in Step 3), such that  $R_0 > C_0$ and $K_0 > C_0$. 
In view of \eqref{growth2} and the definition of $\gamma_{i,j}$ in 
\eqref{H_special1}, we set the viscosity coefficients
\begin{align}\label{gammaij}
  \gamma_{i,j} := 2 R_0, 
  \quad \forall\,(i,j) \in E,
\end{align} which ensures the monotonicity of $\mathcal{G}$ in the ball 
$\{\|P\|_\infty \vee \|Q\|_\infty \leq R_0\}$. Define 
\begin{align}\label{T^*}
  T^* := \sup\bigl\{t \in [0,T] : 
    M_1(s) \leq R_0 \text{ and } M_2(s) \leq K_0 
    \text{ for all } s \in [0,t]\bigr\}.
\end{align}
By the continuity of $M_1$ and $M_2$, we have $T^* > 0$. The aim is to show $T^* = T$.  This will be proved 
 once we establish the strict inequalities $M_1(t) < R_0$ and 
$M_2(t) < K_0$ on $[0, T^*]$ for $h \in (0, h_0]$, where 
$h_0$ will be determined in Step 3 below.

\medskip
\noindent\textbf{Step 1: Gradient bound --- Part~(i).}
Differentiating \eqref{spatial_eq} with respect to $t$ gives 
$L^h_t(\partial_t u^h) = 0$. Setting $\varphi := \frac{1}{2}(\partial_t u^h)^2$ 
and applying $L^h_t$ to $\phi$, we obtain
\begin{align}\label{eqq1}
  L^h_t\varphi 
  = \frac{1}{2}\sum_{(i,j)\in E}\frac{\sqrt{\omega_{i,j}}}{h}
    \Bigl[
      \partial_{p_{i,j}}\mathcal{G}(D^+_{e_{i,j}}\partial_t u^h)^2 
      - \partial_{q_{i,j}}\mathcal{G}(D^-_{e_{i,j}}\partial_t u^h)^2
    \Bigr] \leq 0
\end{align}
on $[0, T^*]$, by the monotonicity of $\mathcal{G}$ 
(i.e., $\partial_{p_{i,j}}\mathcal{G} \leq 0$ and $\partial_{q_{i,j}}\mathcal{G} \geq 0$). 
For a fixed point $(t_1, \xi_1) \in (0,T] \times \mathcal{P}$, let 
$\sigma := \sigma^{h,\xi_1,t_1}$ denote the solution to the adjoint equation 
with terminal condition $\sigma(t_1,\cdot) = \delta_{\xi_1}/w$. 
Applying Lemma~\ref{IBP_identity2} and using \eqref{eqq1} together with 
$w, \sigma \geq 0$, we obtain 
\begin{align*}
  \varphi(t_1,\xi_1) 
  - \int_{\widetilde{\mathcal{P}}} 
    \varphi(0,x)\,\sigma(0,x)\,w(x)\,\mathrm{d}x 
  &= \int_0^{t_1}\int_{\widetilde{\mathcal{P}}} 
     \sigma\,(L^h\varphi)\,w\,\mathrm{d}x\,\mathrm{d}t \leq 0.
\end{align*}
Hence, for any $(t_1,\xi_1) \in [0,T] \times \mathcal{P}$,
\begin{align*}
  |\partial_t u^h(t_1,\xi_1)| 
  &\leq \Big(\int_{\widetilde{\mathcal{P}}} 
    |\partial_t u^h(0,x)|^2\,w(x)\,\sigma(0,x)\,\mathrm{d}x\Big)^{\frac12} \\
  &\leq \sup_{\xi\in\mathcal{P}}|\partial_t u^h(0,\xi)| 
  \leq \sup_{\xi\in\mathcal{P}}
    |\mathcal{G}(\xi,[D^\pm\mathcal{U}_0(\xi)])| 
  =: C_1,
\end{align*} 
where $C_1 $ depends on $\|\mathcal{U}_0\|_{\mathcal C^1(\mathcal P)}$, and we have used the 
mass conservation $\int\sigma\,w\,\mathrm{d}x = 1$ from 
Proposition~\ref{prop_sigma}. In particular,
\begin{align}\label{eq:G_bd}
  |\mathcal{G}(\xi,[D^\pm u^h(t,\xi)])| \leq C_1, 
  \quad (t,\xi) \in [0,T^*]\times\mathcal{P}.
\end{align}

Denote $s_{k,l} := \frac{p_{k,l}-q_{k,l}}{2}  
=  \sqrt{\omega_{k,l}}\frac{D^+_{e_{k,l}}u^h - D^-_{e_{k,l}}u^h}{2h}$.
Recall that the definition of the area $ \mathcal P_{2h,e_{i,j}}$ is given in \eqref{nested}.  
For $\xi \in \mathcal{P}_{2h,e_{k,l}}$, the semi-concavity bound 
$M_2 \leq K_0$,  $e_{k,l}^\top\nabla^2 u^he_{k,l}\in\mathcal{C}(\mathcal{P}_{2h,e_{k,l}})$ (see Proposition~\ref{prop_regularity}), and Taylor's formula yield that for some $\tau_0\in(0,1)$,
\begin{align*}
s_{k,l} &= \tfrac{h\sqrt{\omega_{k,l}}}{2}\,e_{k,l}^\top 
    \frac{1}{h^2}(u^h(\xi+he_{k,l})-2u^h(\xi)+u^h(\xi-he_{k,l}))\,e_{k,l} \notag\\
    &= \tfrac{h\sqrt{\omega_{k,l}}}{2}\,e_{k,l}^\top\nabla^2 u^h(\xi+\tau_0 h e_{k,l})\,e_{k,l} \leq CK_0 h.\end{align*}
   For $\xi \in \mathcal{P}\backslash\mathcal P_{2h,e_{k,l}}$, we instead 
use the gradient bound $M_1 \leq R_0$ to deduce 
$|s_{k,l}| \leq \frac{h\sqrt{\omega_{k,l}}}{2}\cdot\frac{|D^\pm u^h|}{h} \leq C R_0$. 
As a result, 
we have that for some $\tau_0\in(0,1)$ and for all 
$(t,\xi) \in [0,T^*]\times\mathcal{P}$,
\begin{align}\label{eq:s_upper}
s_{k,l} \leq C_2(K_0 h\mathbf 1_{\mathcal P_{2h,e_{k,l}}}+R_0\mathbf 1_{\mathcal P\backslash\mathcal P_{2h,e_{k,l}}}), 
\end{align} 

Using \eqref{eq:s_upper} and the definition of $\gamma_{i,j}$ in  $\mathcal{G} \leq C_1$, we deduce, 
for all $t\in[0,T^*],$
\begin{align}\label{C_5}
  \mathcal I^{-2}g_{k,l}|p^2_{k,l}+q^2_{k,l}| 
  \leq C_3(
    1 + \mathcal I^{-2}R_0
      (K_0 h\mathbf 1_{\mathcal P_{2h,e_{k,l}}}+R_0\mathbf 1_{\mathcal P\backslash\mathcal P_{2h,e_{k,l}}})
  ),
\end{align}where $C_3 := C_3(g,\mathcal{U}_0,E)>0.$

To bound $M_1$, we apply the adjoint method to the equation 
\eqref{fourth_eq} for $\varphi := \frac{1}{2}(\frac{\partial u^h}{\partial\xi_k})^2$. 
Let $(t_1,\xi_1)$ be a maximum point of $\varphi$ on 
$\mathcal{P} \times [0,T^*]$. Using
Lemma~\ref{IBP_identity2}  as in Step 1,   we obtain
\begin{align*}
  \frac{1}{2}\big(\frac{\partial u^h}{\partial\xi_k}\big)^{\!2}(t_1,\xi_1) 
  \leq \int_{\widetilde{\mathcal{P}}} 
    \varphi(0,x)\,\sigma(0,x)\,w(x)\,\mathrm{d}x 
  - \frac{1}{2}\int_0^{t_1}\int_{\widetilde{\mathcal{P}}} 
    \sigma\,w\,\frac{\partial\mathcal{G}}{\partial\xi_k}
    \frac{\partial u^h}{\partial\xi_k}\,\mathrm{d}x\,\mathrm{d}t.
\end{align*}
Applying Young's inequality to the last term yields
\begin{align}\label{integralG}
  \frac{1}{4}\big(\frac{\partial u^h}{\partial\xi_k}\big)^{\!2}(t_1,\xi_1) 
  \leq C + \left[
    \frac{1}{2}\int_0^{t_1}\int_{\widetilde{\mathcal{P}}} 
    \sigma\,w\,\left|\frac{\partial\mathcal{G}}{\partial\xi_k}\right|
    \mathrm{d}x\,\mathrm{d}t
  \right]^{\!2}.
\end{align}
By the bound \eqref{C_5},
\begin{align*}
 \Big|\frac{\partial\mathcal{G}}{\partial\xi_k}\Big| &\leq C\Big(1+\sup_{(i,j)\in E}(\mathcal I^{-3}\xi_k^{-2}g_{i,j}|p_{i,j}^{2}+q^2_{i,j}|+\mathcal I^{-2}\partial_{\xi_i}g_{i,j}|p_{i,j}^2+q^2_{i,j}|)\Big)\\
  &\leq C\Bigl(
    1 + R_0
        \sup_{(i,j)\in E}\bigl(
      K_0 h\,\mathbf{1}_{\mathcal{P}_{2h,e_{i,j}}} 
      + R_0\,\mathbf{1}_{\mathcal{P}\backslash\mathcal P_{2h,e_{i,j}}}
    \bigr)(\mathcal I^{-3}\xi_k^{-2}+\mathcal I^{-2}\partial_{\xi_i}g_{i,j} g^{-1}_{i,j})
  \Bigr)\\
  &\leq C\Bigl(
    1 + R_0\sup_{(i,j)\in E}
        \bigl(
      K_0 h\,\mathbf{1}_{\mathcal{P}_{2h,e_{i,j}}} 
      + R_0\,\mathbf{1}_{\mathcal{P}\backslash\mathcal P_{2h,e_{i,j}}}
    \bigr)\Big),
\end{align*} where we use the boundedness of $|\mathcal I^{-3}\xi_k^{-2}|+|\mathcal I^{-2}\partial_{\xi_i}g_{i,j} g^{-1}_{i,j}|\leq C.$  
Substituting this into \eqref{integralG} and using the mass conservation 
$\int\sigma\,w\,\mathrm{d}x = 1$, we arrive at
\begin{align*}
  \frac{1}{4}\big(\frac{\partial u^h}{\partial\xi_k}\big)^{\!2}(t_1,\xi_1) 
  \leq C_4\bigl(1 + R_0K_0 h + R_0^2 h\bigr)^2,
\end{align*} where $C_4 >0$ is a constant depending on $g,\mathcal{U}_0,E$. 
Choose $R_0$ large and require $h$ to satisfy
\begin{align}\label{h_choice1}
  R_0K_0 h \vee R_0^2h \leq R_0^{1-\zeta}, 
  \quad\text{i.e.,}\quad 
  R_0^{\zeta}K_0 h \vee R_0^{1+\zeta}h \leq 1,
\end{align}
for some $\zeta \in (0,1]$, so that
\begin{align}\label{R_0-determ}
  C_4\bigl(1 + R_0K_0 h + R_0^2 h\bigr)^2 
  < \tfrac{1}{16}R_0^2.
\end{align}
This yields
\begin{align}\label{gradient1}
  \sup_{\xi\in\mathcal{P}_\epsilon}
  \sup_{(i,j)\in E}|\nabla^{e_{i,j}}u^h(t,\xi)| 
  \leq 2\sup_{\xi,k}\,|\partial_{\xi_k}u^h(t_1,\xi_1)| 
  < R_0,
\end{align}
where we use $\nabla^{e_{i,j}}u^h = \partial_{\xi_i}u^h - \partial_{\xi_j}u^h$.

\medskip
\textbf{Step 2: Semi-concavity --- Part~(ii).}  
To control the Hessian $\mathcal V=(\mathcal V_{i,j})_{1\leq i,j\leq d}$ (see \eqref{hessian_evo} for the definition of $\mathcal V_{i,j}$), let $\mathbf{a} \in \mathbb{V}$ be a unit vector 
with $\sum_{k=1}^d a_k = 0$, and define the directional Hessian
$Z_{\mathbf{a}}(t,\xi) := \sum_{i,j=1}^d a_i a_j\mathcal{V}_{i,j}$, 
the directional derivative $\nabla_{\mathbf{a}}v := \sum_{i=1}^d a_i\partial_{\xi_i}v$, 
and the contracted forms 
$\mathcal{Q}_{\mathbf{a}} := \mathbf{a}^\top\mathcal{Q}\,\mathbf{a}$, 
$\mathcal{M}_{\mathbf{a}} := \mathbf{a}^\top\mathcal{M}\,\mathbf{a}$, where  $\mathcal{Q}=(\mathcal Q_{i,j})_{1\leq i,j\leq d},\mathcal{M}=(\mathcal M_{i,j})_{1\leq i,j\leq d}$ are given in \eqref{def_Q} and \eqref{def_M}, respectively.  
Contracting \eqref{hessian_evo} with vector $\mathbf{a}$ gives
\begin{align}\label{Za_evo}
 (L^h Z_{\mathbf{a}} 
  + \mathcal{Q}_{\mathbf{a}} + \mathcal{M}_{\mathbf{a}} 
  + \mathbf{a}^\top\nabla^2_\xi\mathcal{G}\,\mathbf{a})[u^h](t,\xi) = 0.
\end{align}
By applying Lemma \ref{lemma_G} with $R=R_0$, we obtain 
\begin{align}\label{LhZa_bound}
  L^h Z_{\mathbf{a}}[u^h] \leq C(R_0^2 + 1) 
  \quad\text{on } [0,T^*].
\end{align}

We now use \eqref{LhZa_bound} and Lemma~\ref{IBP_identity2} to establish 
$M_2(t) < K_0$ on $[0,T^*]$. Recall that 
$M_2(t) = \sup_{\xi\in\mathcal{P}}\lambda_{\max}(t,\xi)$, where 
$\lambda_{\max}(t,\xi)$ denotes the largest eigenvalue of the Hessian matrix 
$\mathcal{V}(t,\xi) = (\mathcal{V}_{i,j})_{d\times d}$.
Fix $t_1 \in (0,T^*]$. Let $\xi^* \in \mathcal{P}$ be a point at 
which $\lambda_{\max}(t_1,\cdot)$ attains its supremum, and let 
$\mathbf{a}^* \in \mathbb{V}$ be the unit eigenvector of $\mathcal{V}(t_1,\xi^*)$ 
associated with $\lambda_{\max}(t_1,\xi^*)$. Then,  by construction,
\[
  M_2(t_1) = \lambda_{\max}(t_1,\xi^*) 
  = Z_{\mathbf{a}^*}(t_1,\xi^*).
\]Let $\sigma := \sigma^{h,\xi^*,t_1}$ be the adjoint solution with terminal 
condition $\sigma(t_1,\cdot) = \delta_{\xi^*}/w(\xi^*)$. Since 
$\int_{\widetilde{\mathcal{P}}}\sigma(t_1,x)\,w(x)\,\mathrm{d}x = 1$ 
by Proposition~\ref{prop_sigma}, we have
\begin{align*}
  Z_{\mathbf{a}^*}(t_1,\xi^*) 
  &= \int_{\widetilde{\mathcal{P}}} 
    Z_{\mathbf{a}^*}(t_1,x)\,\sigma(t_1,x)\,w(x)\,\mathrm{d}x.
 \end{align*}
Applying Lemma~\ref{IBP_identity2} to $Z_{\mathbf{a}^*}$,   and then using \eqref{LhZa_bound} and the mass conservation, gives
\begin{align*}
  M_2(t_1) 
  &= \int_{\widetilde{\mathcal{P}}} 
    Z_{\mathbf{a}^*}(0,x)\,\sigma(0,x)\,w(x)\,\mathrm{d}x 
  + \int_0^{t_1}\int_{\widetilde{\mathcal{P}}} 
    \sigma\,(L^h Z_{\mathbf{a}^*})\,w\,\mathrm{d}x\,\mathrm{d}t\\
   & \leq \sup_{\xi\in\mathcal{P}}\|\nabla^2\mathcal{U}_0(\xi)\| 
  + C(R_0^2+1)\,T 
  \quad\text{on } [0,T^*].
\end{align*} 
We therefore choose
\begin{align}\label{K_0}
  K_0 > \sup_{\xi\in\mathcal{P}}|\nabla^2\mathcal{U}_0(\xi)| 
  + C(R_0^2+1)\,T,
\end{align}
so that $M_2(t) < K_0
$ holds strictly  on $[0,T^*]$.

\medskip
\textbf{Step 3: Closing the bootstrap --- Parts~(i) and~(ii).}
It remains to choose $R_0,K_0$ and $h_0$ so that the conditions 
\eqref{h_choice1}, $M_1(t) < K_0$ and 
$M_2(t) < K_0$ are valid simultaneously on $[0,T^*]$. For fixed $\zeta \in (0,1]$, set
\[
  R_0 > (\frac{1}{8\sqrt{C_4}})^{\frac{1}{1-\zeta}} \vee 1,
\]
so that
\begin{align}\label{C41}
  \sqrt{C_4}(1+R_0^{1-\zeta}) \leq 2\sqrt{C_4} R_0^{1-\zeta} \leq \frac14
  < \frac 14R_0.
\end{align}
For this fixed $R_0$, we choose $K_0$ satisfying \eqref{K_0} and set 
$h_0 := h_0(R_0, \zeta)$ small enough that \eqref{h_choice1} holds for all $h\in(0,h_0)$.
This ensures 
\[
  \sqrt{C_4}(1+R_0K_0 h+R_0^2h)  \leq \sqrt{C_4}(1+R_0^{1-\zeta}).
\]
Combining this with \eqref{C41} yields \eqref{R_0-determ}. Consequently, both
$M_1(t) < R_0$ and $M_2(t) < K_0$ 
 hold strictly on $[0,T^*]$. By the definition of $T^*$, this forces 
$T^* = T$, completing the proof of Parts~(i) and~(ii).

For the Osher--Sethian scheme~\eqref{H_special3}, the proof follows 
the same bootstrap structure as above. The only change is in the estimate corresponding to ~\eqref{C_5}: the upwind selection yields 
the estimate
\begin{align*}
  \mathcal{I}^{-2}(\xi)
  \sup_{(k,l)\in E}
  g_{k,l}(\xi)
  \bigl(|\min(p_{k,l},0)|^2\vee|\max(q_{k,l},0)|^2\bigr)
  \leq C_3,
\end{align*}
which does not depend on  $R_0$. Consequently, the auxiliary 
conditions~\eqref{h_choice1}--\eqref{R_0-determ} are no longer needed in the Osher--Sethian case, and the rest of the argument (Steps 0–3) carries over with only minor changes based on the estimate above.
\end{proof}

\section{Appendix}\label{app1}
This appendix collects the proofs of Proposition \ref{prop_regularity} and Lemma \ref{lemma_G}.

\begin{proof}[Proof of Proposition \ref{prop_regularity}]Throughout this proof, we assume $\mathcal{F}=0$ for simplicity. 
The general case $\mathcal{F}\in\mathcal{C}^2(\mathcal{P})$ 
follows by the 
same argument, since $\mathcal{F}$ is independent of $u^h$, and $\mathcal{C}^2$-regularity is sufficient to obtain the gradient and Hessian estimates of $u^h$ in \textit{Steps 4--5}.

\textit{Step 1: Local existence and uniqueness.} 
We reformulate \eqref{spatial_eq} as a differential equation in the Banach space
$\mathcal{C}(\mathcal{P})$:
\begin{align}\label{eq:ODE_zh}
\begin{cases}
\dot{z}^h(t) = \Gamma_h(z^h(t)), \quad t \in (0,\infty), \\
z^h(0) = \mathcal{U}_0,
\end{cases}
\end{align}
where $\Gamma_h : \mathcal{C}(\mathcal{P}) \to \mathcal{C}(\mathcal{P})$
is defined by
\begin{align}\label{Gammah}
  \Gamma_h(z)(\xi) := -\mathcal{G}\bigl(\xi,[D^+z](\xi),[D^-z](\xi)\bigr),
\end{align}
with the constant extrapolation convention
\begin{align}\label{eq:const_ext}
  D^\pm_{e_{i,j}}z(\xi) := 0
  \quad\text{whenever } \xi \pm he_{i,j} \notin \mathcal{P}.
\end{align}
For each fixed $h>0$, the difference operators $[D^\pm\cdot]$ are 
bounded linear operators (with operator norm $\mathcal O(\frac1h)$) on $\mathcal{C}(\mathcal{P})$. Indeed, we have 
\[
  \bigl\|[D^\pm z]\bigr\|_{\mathcal{C}(\mathcal{P};\,\mathbb{S}^{d\times d})}
  \leq \frac{C\sqrt{\omega_{\max}}}{h}\,\|z\|_{\mathcal{C}(\mathcal{P})},
  \qquad \omega_{\max}:=\max_{(i,j)\in E}\omega_{i,j}.
\]
Since $\mathcal{G}(\xi,P,Q)$ is locally Lipschitz in $(P,Q)$ for each
$\xi\in\mathcal{P}$, the operator $\Gamma_h$ is locally Lipschitz on
$\mathcal{C}(\mathcal{P})$. By the 
Picard--Lindel\"of theorem in Banach spaces, there exist $\tau > 0$ and 
a unique solution $z^h \in \mathcal{C}^1([0,\tau);\mathcal{C}(\mathcal{P}))$ 
to \eqref{eq:ODE_zh}. Setting $u^h := z^h$ yields the unique local 
solution to \eqref{spatial_eq}.

\textit{Step 2: Global existence and uniqueness.} We claim that $u^h$ satisfies the a priori bound
\begin{align}\label{eq:Linfty_bound}
  \|u^h(\cdot,t)\|_{L^\infty(\mathcal P)} 
  \leq \|\mathcal{U}_0\|_{L^\infty(\mathcal P)} + \|\mathcal{G}(\cdot,0,0)\|_{L^\infty(\mathcal P)}\,t, 
  \quad t \in (0,\infty),
\end{align}
which prevents finite-time blowup and thus extends the solution globally. 
To prove the upper bound, fix $t_1 > 0$, choose any constant 
$c_1 > \|\mathcal{G}(\cdot,0,0)\|_{L^\infty(\mathcal P)}$, and set 
$v^h(\xi,t) := u^h(\xi,t) - c_1 t$. Let $(\xi_0,t_0)$ be a 
maximizer of $v^h$ on $\mathcal{P}\times[0,t_1]$. 
Suppose for contradiction that $t_0 \in (0,t_1]$. Then 
$\partial_t v^h(\xi_0,t_0) \geq 0$ with $\partial_t v^h(\xi_0, t_0) =0$ when $t_0\in(0,t_1)$.  Since $\xi_0$ is a spatial maximizer  of $u^h(\cdot,t_0)$, we have 
\[
  [D^+u^h]_{k,l}(\xi_0,t_0)\leq 0
  \quad\text{and}\quad
  [D^-u^h]_{k,l}(\xi_0,t_0)\geq 0
  \quad\forall\,(k,l)\in E.
\]
By the monotonicity of 
$\mathcal{G}$ (non-increasing in $P$, non-decreasing in $Q$),
\[
  \mathcal{G}\bigl(\xi_0,[D^\pm u^h]\bigr) 
  \geq \mathcal{G}(\xi_0,0,0).
\]
Hence
\[
  \partial_t v^h(\xi_0,t_0) 
  = -\mathcal{G}\bigl(\xi_0,[D^\pm u^h]\bigr) - c_1 
  \leq -\mathcal{G}(\xi_0,0,0) - c_1 < 0
\] due to the selected constant $c_1 > \|\mathcal{G}(\cdot,0,0)\|_{L^\infty(\mathcal P)},$ a contradiction. Thus $t_0 = 0$, and the maximum of $v^h$ is attained 
at the initial time. A symmetric argument applied to $w^h = u^h + c_1 t$ 
yields  the corresponding lower bound. The uniform bound \eqref{eq:Linfty_bound} 
rules out finite-time blow-up of the solution,  and therefore the local solution extends uniquely to all  $t \in (0,\infty)$.

\medskip
\textit{Step 3: Piecewise structure of the domain.} 
Under the constant extrapolation convention \eqref{eq:const_ext}:
\begin{itemize}
\item[(1)] For $\xi\in\mathcal{P}_{h,e_{i,j}}$, all stencil points
$\xi\pm he_{i,j}$ lie in $\mathcal{P}$, so $[D^\pm u^h](\xi)$
depends smoothly on $\xi$ through $u^h$ alone.
\item[(2)] As $\xi$ crosses $\partial\mathcal{P}_{h,e_{i,j}}$, the values
$[D^\pm u^h]$ undergo a jump  (some components switch between
$\frac{u^h(\xi\pm he_{i,j})-u^h(\xi)}{h}$ and $0$), creating
a finite jump discontinuity in $[D^\pm u^h]$ at $\partial\mathcal{P}_{h,e_{i,j}}$.
\item[(3)] As $\xi$ crosses $\partial\mathcal{P}_{2h,e_{i,j}}$, the
{derivative} of $[D^\pm u^h]$ (which involves $\nabla^{e_{i,j}} u^h$
at shifted points) jumps, since those shifted points cross
$\partial\mathcal{P}_{h,e_{i,j}}$ where $\nabla^{e_{i,j}}u^h$ is already
discontinuous.
\end{itemize}
The analysis below is carried out on each region $\{\mathcal{P}_{2h,e_{i,j}},
\mathcal{P}_{h,e_{i,j}}\setminus\mathcal{P}_{2h,e_{i,j}},\mathcal{P}\setminus\mathcal{P}_{h,e_{i,j}}\}$  separately.

\medskip
\textit{Step 4: First-order spatial regularity: $\nabla^{e_{i,j}} u^h \in
P\mathcal C^0_{e_{i,j}}$.}

\textit{(a) Continuity on $\mathcal{P}_{h,e_{i,j}}$.} We proceed in two steps. We first formally differentiate \eqref{spatial_eq} in $\xi$ to obtain a linearized differential equation (see \eqref{eq:linear_V}) and show the well-posedness of the solution $V^h$. Since the existence of the Euclidean gradient $\nabla_\xi u^h$ 
has not yet been established, 
$V^h$ is introduced as a candidate for this gradient.
Next, We show that the difference quotient $\frac{u^h(t,\xi+\tau e_{i,j})-u^h(t,\xi)}{\|\tau e_{i,j}\|_{l^2}} 
 - V^h\cdot\frac{ e_{i,j}}{\|e_{i,j}\|_{l^2}}$ vanishes as $\tau\to 0$. This proves at the same time the existence of $\nabla^{e_{i,j}} u^h$ and its identification with $V^h\cdot e_{i,j}$.  The claimed continuity then follows from the continuity of  $V^h$. 

For $\xi\in\mathcal{P}_{h,e_{i,j}}$, no extrapolation is involved in the direction $e_{i,j}$ and
$\Gamma_h(u^h)(\xi)$ is $\mathcal{C}^1$ in $\xi$.
Consider the linearized differential equation on
$\mathcal{P}$ 
\begin{equation}\label{eq:linear_V}
\begin{cases}
  \dot{V}^h(t) = \mathcal{L}^h(t,V^h(t)), \quad t\in(0,\infty),\\
  V^h(0) = \nabla_\xi\mathcal{U}_0.
\end{cases}
\end{equation}
where for $W\in\mathcal{C}(\mathcal{P};\mathbb{R}^d)$, the operator
$\mathcal{L}^h(t,\cdot)$ acts componentwise. More precisely, its $k$-th 
component is given by 
\[
  \bigl(\mathcal{L}^h(t,W)\bigr)_k(\xi)
  := -\partial_{\xi_k}\mathcal{G}\big|_*
     - \sum_{(i,j)\in E}\partial_{p_{i,j}}\mathcal{G}\big|_*
       [D^+W_k]_{i,j}(\xi)
     - \sum_{(i,j)\in E}\partial_{q_{i,j}}\mathcal{G}\big|_*
       [D^-W_k]_{i,j}(\xi),
\]
for $k=1,\dots,d$, with $*=(\xi,[D^\pm u^h(t,\xi)])$.
Here, the differences $[D^\pm W_k]_{i,j}$ are computed under the same
constant extrapolation convention \eqref{eq:const_ext} as for $u^h$. 
The coefficients $\partial_{p_{i,j}}\mathcal{G}|_*$, 
$\partial_{q_{i,j}}\mathcal{G}|_*$, $\nabla_\xi\mathcal{G}|_*$ are 
bounded measurable functions of $\xi$ on $\mathcal{P}$ for fixed $h>0$, 
and are continuous on each region 
$\{\mathcal{P}_{h,e_{i,j}},\mathcal{P}\setminus\mathcal{P}_{h,e_{i,j}}\}$, and  $[D^\pm \cdot]$ are bounded linear 
operators on $L^\infty(\mathcal{P})$ with operator norm $\mathcal O(1/h)$. 
By the Picard--Lindel\"of theorem in Banach spaces, for each fixed $h,$ there exists a unique global 
solution $V^h\in\mathcal{C}([0,T];L^\infty(\mathcal{P};\mathbb{R}^d))\cap \mathcal C([0,T];\mathcal C(\mathcal P_{h,e_{i,j}};\mathbb R^d))$.
In particular, $\mathcal{L}^h(t,V^h(t))\cdot e_{i,j}$ is well defined as an 
element of $L^\infty(\mathcal{P})$.

To identify $V^h\cdot e_{i,j} = \nabla^{e_{i,j}}u^h$, fix 
a tangent direction $y=\tau e_{i,j}\in\mathbb{R}^d$ with $\tau>0$ sufficiently small so that $\xi$ and $\xi+y$ 
lie in the same open region $
\mathcal{P}_{h,e_{i,j}}$. Set 
$
  \delta_y u^h(t,\xi) 
  := \frac{u^h(t,\xi+y)-u^h(t,\xi)}{\|y\|_{l^2}}.
$
Differencing \eqref{spatial_eq} at points $\xi+y$ and $\xi$, and using the local Lipschitz 
property of $\mathcal{G}$ (Assumption~\ref{ass_H2}(iii)) and 
its $\mathcal{C}^1$ regularity in $\xi$ (Assumption~\ref{ass_G}) 
at $\xi_\theta:=\xi+\theta y$, $\theta\in(0,1),$ we obtain 
\begin{align*}
\partial_t({\delta}_y u^h)(t,\xi) 
  &= -\Bigl(
      \nabla_\xi\mathcal{G}\big|_\theta\cdot\tfrac{y}{\|y\|_{l^2}} 
      + \sum_{(k,l)\in E}\partial_{p_{k,l}}\mathcal{G}\big|_\theta
        [D^+\delta_y u^h]_{k,l}(\xi) 
      + \sum_{(k,l)\in E}\partial_{q_{k,l}}\mathcal{G}\big|_\theta
        [D^-\delta_y u^h]_{k,l}(\xi)
    \Bigr)\\
    &
  =: \mathcal{R}^h(t,\delta_y u^h(t)),
\end{align*}
where $|_\theta$ denotes evaluation of the derivatives 
at $(\xi_\theta, [D^\pm u^h(\xi_\theta)])$ 
with $\xi_\theta := \xi+\theta y$ for some $\theta\in(0,1)$, 
given by
\[
\nabla_\xi\mathcal{G}|_\theta := \int_0^1\nabla_\xi\mathcal{G}
(\xi+\theta y,[D^\pm u^h(\xi+\theta y)])\,d\theta,
\]
and similarly for $\partial_{p_{k,l}}\mathcal{G}|_\theta$ and 
$\partial_{q_{k,l}}\mathcal{G}|_\theta$. 
Setting $e_y(t):=\delta_y u^h(t) - V^h(t)\cdot\frac{y}{\|y\|_{l^2}}$, 
we have 
\[
  \dot{e}_y(t) = \mathcal{R}^h(t,e_y(t)) + \phi_{h,y}(t),
\]
where 
$\phi_{h,y}:=\mathcal{R}^h(t,V^h\cdot\frac{y}{\|y\|_{l^2}}) 
- \mathcal{L}^h(t,V^h)\cdot\frac{y}{\|y\|_{l^2}}$. 
By the $\mathcal{C}^1$-regularity 
of $\mathcal{G}$ and $\xi_\theta\to\xi$ as $y\to 0$, we have 
$\|\phi_{h,y}(t)\|_{\mathcal{C}(\mathcal{P}_{h,e_{i,j}})}\to 0$ 
as $y\to 0$, uniformly in $t\in[0,T]$, 
Moreover, $\|\phi_{h,y}(\cdot)\|_{\mathcal{C}(\mathcal{P}_{h,e_{i,j}})}$ 
is uniformly bounded in $s \in [0,T]$ by the boundedness of $V^h$. By 
Gronwall's inequality, 
\[
  \|e_y(t)\|_{\mathcal{C}(\mathcal{P}_{h,e_{i,j}})} 
  \leq e^{C_1 t}\|e_y(0)\|_{\mathcal{C}(\mathcal{P}_{h,e_{i,j}})} 
  + \int_0^t e^{C_1(t-s)}
    \|\phi_{h,y}(s)\|_{\mathcal{C}(\mathcal{P}_{h,e_{i,j}})}\,\mathrm{d}s.
\]
Taking $y\to 0$ and applying the dominated convergence theorem, 
both terms vanish, proving
$\nabla^{e_{i,j}} u^h = V^h\cdot e_{i,j}\in\mathcal{C}((0,T)\times\mathcal{P}_{h,e_{i,j}})$.

\textit{(b) Continuity on $\mathcal{P}\setminus\mathcal{P}_{h,e_{i,j}}$.}
For $\xi\in\mathcal{P}\setminus\mathcal{P}_{h,e_{i,j}}$, whenever
$\xi+he_{i,j}\notin\mathcal{P}$, the constant extrapolation sets
$[D^+_{e_{i,j}}u^h](\xi)\equiv 0$ as a function of $\xi$, so
$\partial_{\xi_k}[D^+_{e_{i,j}}u^h](\xi)=0$ for all $k=1,\ldots,d.$  Hence the
corresponding terms in $\mathcal{L}^h$ and $\mathcal R^h$ vanish, and
$\mathcal{L}^h,\mathcal R^h$ reduce to sums over only those edges $(i,j)$
for which $\xi\pm he_{i,j}\in\mathcal{P}$.  The coefficients
of this reduced operator are continuous on
$\mathcal{P}\setminus\mathcal{P}_{h,e_{i,j}}$.
Applying the same linearisation argument as in part (a) 
 gives $\nabla_\xi u^h\in\mathcal{C}
((0,T)\times(\mathcal{P}\setminus\mathcal{P}_{h,e_{i,j}}))$.

\textit{(c) Finite jump across $\partial\mathcal{P}_{h,e_{i,j}}$.}
As $\xi$ approaches $\partial\mathcal{P}_{h,e_{i,j}}$ from inside
$\mathcal{P}_{h,e_{i,j}}$, the difference
$[D^+_{e_{i,j}}u^h](\xi) = (u^h(\xi+he_{i,j})-u^h(\xi))/h$
is bounded by
$2\|u^h\|_{L^\infty(\mathcal{P})}/h$, whereas from outside
$\mathcal{P}_{h,e_{i,j}}$ it is zero by the constant extrapolation
convention~\eqref{eq:const_ext}.
Hence the coefficient
$\partial_{p_{i,j}}\mathcal{G}|_{*}$
of $\mathcal{L}^h$ jumps by some constant $C_h,$ which depends on $h$ but is finite for each 
fixed $h>0$.
By the Gronwall's inequality argument, the jump in $\nabla^{e_{i,j}}u^h$
can be bounded by some constant $\tilde C_h$ depending on $C_h$:
\[
  \bigl\|\nabla^{e_{i,j}}u^h\big|_{\partial\mathcal{P}_{h,e_{i,j}}^+}
  - \nabla^{e_{i,j}}u^h\big|_{\partial\mathcal{P}_{h,e_{i,j}}^-}\bigr\|_{l^2}
  \leq \tilde C_h < \infty.
\]

Combining parts~(a)--(c) with the $L^\infty$ bound of \textit{Step~2}, we conclude that 
\[
  \nabla^{e_{i,j}} u^h \in P\mathcal C^0_{e_{i,j}}\bigl((0,T)\times\mathcal{P}\bigr) \quad\text{with finite jumps on } \partial\mathcal{P}_{h,e_{i,j}},
  \qquad
  \|\nabla_\xi u^h(\cdot,t)\|_{L^\infty(\mathcal{P})} \leq C_h<\infty,
\]where $C_h>0$ depends on $h$ that may be different at each appearance.

\medskip
\textit{Step 5: Second-order spatial regularity: 
$e^{\top}_{i,j}\nabla^2 u^h e_{i,j} \in P\mathcal C^0_{e_{i,j}}$.}

\textit{(a) Continuity on $\mathcal{P}_{2h,e_{i,j}}$.} 
For $\xi\in\mathcal{P}_{2h,e_{i,j}}$, every shifted point
$\xi\pm he_{i,j}$ lies in $\mathcal{P}_{h,e_{i,j}}$, so
$\nabla_\xi[D^\pm u^h](\xi) = [D^\pm\nabla_\xi u^h](\xi)$ 
without any extrapolation, and
$\nabla_\xi u^h\in\mathcal{C}(\mathcal{P}_{h,e_{i,j}})$ from Step~4(a).
Differentiating \eqref{eq:linear_V} with respect to $\xi$ yields
the linearized differential equation for the candidate Hessian
$W^h(t):=\nabla^2u^h(t)$ satisfying 
\[
  \dot{W}^h(t) = \mathcal{L}^h_2(t,W^h(t)),
  \quad W^h(0) = \nabla^2\mathcal{U}_0,
\]
where $\mathcal{L}^h_2$ depends on the second derivatives of
$\mathcal{G}$ and on $V^h=\nabla_\xi u^h$. Both are continuous and
bounded on $\mathcal{P}_{2h,e_{i,j}}$ for fixed $h>0$.
The same Gronwall argument as in \textit{Step 4 (a)} gives
\[
  e^{\top}_{i,j}\nabla^2 u^he_{i,j} \in
  \mathcal{C}\bigl((0,T)\times\mathcal{P}_{2h,e_{i,j}}\bigr).
\]

\textit{(b) Continuity on $\mathcal{P}_{h,e_{i,j}}\setminus\mathcal{P}_{2h,e_{i,j}}$
and $\mathcal{P}\setminus\mathcal{P}_{h,e_{i,j}}$.}
For $\xi\in\mathcal{P}_{h,e_{i,j}}\setminus\mathcal{P}_{2h,e_{i,j}}$, some shifted
points $\xi\pm he_{i,j}$ lie in $\mathcal{P}\setminus\mathcal{P}_{h,e_{i,j}}$,
where $\nabla^{e_{i,j}} u^h$ is continuous by \textit{Step 4(b)}. As $\xi$ varies
within $\mathcal{P}_{h,e_{i,j}}\setminus\mathcal{P}_{2h,e_{i,j}}$, the shifted points
remain in $\mathcal{P}\setminus\mathcal{P}_{h,e_{i,j}}$ without crossing
$\partial\mathcal{P}_{h,e_{i,j}}$. Thus the coefficients of 
$\mathcal{L}^h_2$
are continuous on $\mathcal{P}_{h,e_{i,j}}\setminus\mathcal{P}_{2h,e_{i,j}}$ and the
same reasoning as in \textit{Step 5 (a)} shows
$e_{i,j}^{\top}\nabla^2u^he_{i,j}\in\mathcal{C}((0,T)\times
(\mathcal{P}_{h,e_{i,j}}\setminus\mathcal{P}_{2h,e_{i,j}}))$.
An analogous argument on $\mathcal{P}\setminus\mathcal{P}_{h,e_{i,j}}$
 gives
$e_{i,j}^{\top}\nabla^2u^he_{i,j}\in\mathcal{C}((0,T)\times
(\mathcal{P}\setminus\mathcal{P}_{h,e_{i,j}}))$.

\textit{(c) Finite jumps across $\partial\mathcal{P}_{2h,e_{i,j}}$ and
$\partial\mathcal{P}_{h,e_{i,j}}$.}
As $\xi$ crosses $\partial\mathcal{P}_{2h,e_{i,j}}$, the shifted point
$\xi\pm he_{i,j}$ crosses $\partial\mathcal{P}_{h,e_{i,j}}$, where
$\nabla^{e_{i,j}} u^h$ has a finite jump of size $\mathcal O(C_h)$.
This induces a finite jump in the coefficients of $\mathcal{L}^h_2$,
and therefore
\[
  \bigl\|e^{\top}_{i,j}(\nabla^2 u^h\big|_{\partial\mathcal{P}_{2h,e_{i,j}}^+}
  - \nabla^2u^h\big|_{\partial\mathcal{P}_{2h,e_{i,j}}^-})e_{i,j}\bigr\|_{l^2}
  \leq C_h < \infty.
\] A second finite jump of the same form occurs at
$\partial\mathcal{P}_{h,e_{i,j}}$ for the same reason as in \textit{Step~4(c)}.
Hence
\[
  e^{\top}_{i,j}\nabla^2 u^he_{i,j} \in P\mathcal C^0_{e_{i,j}}\bigl((0,T)\times\mathcal{P}\bigr),
  \qquad
  \|e^{\top}_{i,j}\nabla^2u^h(\cdot,t)e_{i,j}\|_{L^\infty(\mathcal{P})} \leq C_h,
\]
with discontinuities confined to the measure-zero hypersurfaces
$\partial\mathcal{P}_{2h,e_{i,j}}$ and $\partial\mathcal{P}_{h,e_{i,j}}$.

Combining Steps~1--5 completes the proof. 
\end{proof}

\begin{proof}[Proof of Lemma \ref{lemma_G}]

\textit{Verification for \eqref{H_special}.}
Let $R>0$ be fixed and $\|P\|_{\infty}\vee\|Q\|_{\infty}\leq R.$ By the  {definitions} \eqref{def_Q} and \eqref{def_M}, one can calculate that 
\[
\mathbf a^{\top}\mathcal{Q} \mathbf a = \mathcal{I}^{-2}(\xi)\sum_{(k,l)\in E}g_{k,l}(\xi)\frac{\omega_{k,l}}{h^2}\Big[\big(D^+_{e_{k,l}}\nabla_{\mathbf a} u^h \big)^2+ \big(D^-_{e_{k,l}}\nabla_{\mathbf a} u^h\big)^2\Big].
\]
 For the mixed term $
\mathcal{M}$, it follows from \eqref{H_special} that 
\begin{align*}
&\frac{\partial ^2\mathcal{G}}{\partial \xi_i\partial  p_{k,l}} = \partial_{\xi_i}(\tfrac{1}{2}\mathcal{I}^{-2} g_{k,l})\cdot p_{k,l}-\partial_{\xi_i}(\tfrac12\mathcal I^{-2}g_{k,l}),\\
&\frac{\partial ^2\mathcal{G}}{\partial \xi_i\partial  q_{k,l}} = \partial_{\xi_i}(\tfrac{1}{2}\mathcal{I}^{-2} g_{k,l})\cdot q_{k,l}+\partial_{\xi_i}(\tfrac12\mathcal I^{-2}g_{k,l}). 
\end{align*}
By using the 
 Young inequality with a small parameter 
$\gamma_0\in(0,1)$, we obtain 
\begin{align*}
\mathbf a^{\top}\mathcal M \mathbf a
&
\ge  -\sum_{(k,l)\in E}\Big[C(\gamma_0)|\nabla_{\mathbf a}(\tfrac12\mathcal I^{-2}g_{k,l})|^2 (1+|p_{k,l}|^2+|q_{k,l}|^2)(\mathcal{I}^{-2}g_{k,l})^{-1}
\Big]\\
&\quad -\gamma_0\mathcal{I}^{-2}\sum_{(k,l)\in E}\frac{\omega_{k,l}}{h^2}g_{k,l}\Big[(D^+_{e_{k,l}}\nabla_{\mathbf a}u^h)^2+(D^-_{e_{k,l}}\nabla_{\mathbf a}u^h)^2\Big]\\
&\ge -CR^2-
\gamma_0\mathcal{I}^{-2}\sum_{(k,l)\in E}\frac{\omega_{k,l}}{h^2}g_{k,l}\Big[(D^+_{e_{k,l}}\nabla_{\mathbf a}u^h)^2+(D^-_{e_{k,l}}\nabla_{\mathbf a}u^h)^2\Big], 
\end{align*} where the last term can be absorbed by 
$\mathbf a^{\top}\mathcal Q\mathbf a$, yielding $\mathbf a^{\top}(\mathcal Q+\mathcal M)\mathbf a\ge -CR^2.$ 
Similarly, 
since $\mathcal{G}$ is quadratic in $(P,Q)$ and its 
$\xi$-coefficients are smooth and bounded on $\mathcal{P}$, a direct computation 
using~\eqref{H_special}  gives 
$\mathbf a^\top\nabla^2_\xi\mathcal{G}\,\mathbf a \geq -C\sup_{(k,l)}(|p_{k,l}|^2+|q_{k,l}|^2) 
\geq -CR^2$. 
Thus we obtain that 
\[\mathbf a^{\top}(\mathcal Q+\mathcal M)\mathbf a +\mathbf a^{\top}\nabla^2_{\xi}\mathcal G\mathbf a\ge -CR^2.
\]

\textit{Verification for \eqref{H_special3}.} The proof proceeds analogously to the verification of  \eqref{H_special},  and thus is omitted. 
\end{proof}

\bibliographystyle{plain}
\bibliography{references.bib}

@article {WENO,
    AUTHOR = {Shu, C.-W.},
     TITLE = {High order weighted essentially nonoscillatory schemes for
              convection dominated problems},
   JOURNAL = {SIAM Rev.},
  FJOURNAL = {SIAM Review},
    VOLUME = {51},
      YEAR = {2009},
    NUMBER = {1},
     PAGES = {82--126},
      ISSN = {1095-7200,0036-1445},
   MRCLASS = {65M06},
  MRNUMBER = {2481112},
MRREVIEWER = {M.\ K.\ Kadalbajoo},
       DOI = {10.1137/070679065},
       URL = {https://doi-org.ezproxy.lb.polyu.edu.hk/10.1137/070679065},
}

@article {Cagnetti13,
    AUTHOR = {Cagnetti, F. and Gomes, D. and Tran, H. V.},
     TITLE = {Convergence of a semi-discretization scheme for the
              {H}amilton-{J}acobi equation: a new approach with the adjoint
              method},
   JOURNAL = {Appl. Numer. Math.},
  FJOURNAL = {Applied Numerical Mathematics. An IMACS Journal},
    VOLUME = {73},
      YEAR = {2013},
     PAGES = {2--15},
      ISSN = {0168-9274,1873-5460},
   MRCLASS = {65M06},
  MRNUMBER = {3146863},
MRREVIEWER = {K.\ Kanakadurga},
       DOI = {10.1016/j.apnum.2013.05.004},
       URL = {https://doi-org.ezproxy.lb.polyu.edu.hk/10.1016/j.apnum.2013.05.004},
}

@article {Tadmor2,
    AUTHOR = {Tadmor, E.},
     TITLE = {Local error estimates for discontinuous solutions of nonlinear
              hyperbolic equations},
   JOURNAL = {SIAM J. Numer. Anal.},
  FJOURNAL = {SIAM Journal on Numerical Analysis},
    VOLUME = {28},
      YEAR = {1991},
    NUMBER = {4},
     PAGES = {891--906},
      ISSN = {0036-1429},
   MRCLASS = {35L65 (65M12 65M15)},
  MRNUMBER = {1111445},
MRREVIEWER = {Benoit\ Perthame},
       DOI = {10.1137/0728048},
       URL = {https://doi-org.ezproxy.lb.polyu.edu.hk/10.1137/0728048},
}

@book {Lang,
    AUTHOR = {Lang, S.},
     TITLE = {Real and {F}unctional {A}nalysis},
    SERIES = {Graduate Texts in Mathematics},
    VOLUME = {142},
   EDITION = {Third},
 PUBLISHER = {Springer-Verlag, New York},
      YEAR = {1993},
     PAGES = {xiv+580},
      ISBN = {0-387-94001-4},
   MRCLASS = {00A05 (26-01 28-01 46-01 47-01 58-01)},
  MRNUMBER = {1216137},
       DOI = {10.1007/978-1-4612-0897-6},
       URL = {https://doi-org.ezproxy.lb.polyu.edu.hk/10.1007/978-1-4612-0897-6},
}

@article {Barles91,
    AUTHOR = {Barles, G. and Souganidis, P. E.},
     TITLE = {Convergence of approximation schemes for fully nonlinear
              second order equations},
   JOURNAL = {Asymptotic Anal.},
  FJOURNAL = {Asymptotic Analysis},
    VOLUME = {4},
      YEAR = {1991},
    NUMBER = {3},
     PAGES = {271--283},
      ISSN = {0921-7134},
   MRCLASS = {35K55 (35A40 35J60 65M12)},
  MRNUMBER = {1115933},
}

@article {Crandall84,
    AUTHOR = {Crandall, M. G. and Lions, P.-L.},
     TITLE = {Two approximations of solutions of {H}amilton-{J}acobi
              equations},
   JOURNAL = {Math. Comp.},
  FJOURNAL = {Mathematics of Computation},
    VOLUME = {43},
      YEAR = {1984},
    NUMBER = {167},
     PAGES = {1--19},
      ISSN = {0025-5718,1088-6842},
   MRCLASS = {65M10 (35F20 49C10)},
  MRNUMBER = {744921},
MRREVIEWER = {Steven\ M.\ Serbin},
       DOI = {10.2307/2007396},
       URL = {https://doi-org.ezproxy.lb.polyu.edu.hk/10.2307/2007396},
}

@article {Souganidis85,
    AUTHOR = {Souganidis, P.},
     TITLE = {Approximation schemes for viscosity solutions of
              {H}amilton-{J}acobi equations},
   JOURNAL = {J. Differential Equations},
  FJOURNAL = {Journal of Differential Equations},
    VOLUME = {59},
      YEAR = {1985},
    NUMBER = {1},
     PAGES = {1--43},
      ISSN = {0022-0396,1090-2732},
   MRCLASS = {35F20 (35L60 65M10)},
  MRNUMBER = {803085},
MRREVIEWER = {E.\ N.\ Barron},
       DOI = {10.1016/0022-0396(85)90136-6},
       URL = {https://doi-org.ezproxy.lb.polyu.edu.hk/10.1016/0022-0396(85)90136-6},
}

@misc{XiangY1,
      title={Discrete Mean Field Games on Finite Graphs as Initial Value Optimization}, 
      author={Y. Feng and Y. Xiang and H. Zhou},
      year={2026},
      eprint={2604.05685},
      archivePrefix={arXiv},
      primaryClass={math.NA},
      url={https://arxiv.org/abs/2604.05685}, 
}

@article {LT2001,
    AUTHOR = {Lin, C.-T. and Tadmor, E.},
     TITLE = {{$L^1$}-stability and error estimates for approximate
              {H}amilton-{J}acobi solutions},
   JOURNAL = {Numer. Math.},
  FJOURNAL = {Numerische Mathematik},
    VOLUME = {87},
      YEAR = {2001},
    NUMBER = {4},
     PAGES = {701--735},
      ISSN = {0029-599X,0945-3245},
   MRCLASS = {65M12 (35A35 35F25 49L20)},
  MRNUMBER = {1815732},
MRREVIEWER = {Alexander\ Kurganov},
       DOI = {10.1007/PL00005430},
       URL = {https://doi-org.ezproxy.lb.polyu.edu.hk/10.1007/PL00005430},
}

@article{CDM25,
      title={Finite difference schemes for {H}amilton--{J}acobi equation on Wasserstein space on graphs}, 
      author={J. Cui and T. Dang and C. Mou},
      year={2026},
      eprint={2504.13463},
      JOURNAL={To appear in SIAM Journal on Numerical Analysis},
      primaryClass={math.NA},
     }

@article {Mielke,
    AUTHOR = {Mielke, A.},
     TITLE = {Geodesic convexity of the relative entropy in reversible
              {M}arkov chains},
   JOURNAL = {Calc. Var. Partial Differential Equations},
  FJOURNAL = {Calculus of Variations and Partial Differential Equations},
    VOLUME = {48},
      YEAR = {2013},
    NUMBER = {1-2},
     PAGES = {1--31},
      ISSN = {0944-2669,1432-0835},
   MRCLASS = {60J27 (53C21 53C23 82B35)},
  MRNUMBER = {3090532},
MRREVIEWER = {Roman\ Urban},
       DOI = {10.1007/s00526-012-0538-8},
       URL = {https://doi.org/10.1007/s00526-012-0538-8},
}

@book {HJSK,
    AUTHOR = {Hofbauer, J. and Sigmund, K.},
     TITLE = {The {T}heory of {E}volution and {D}ynamical {S}ystems: {M}athematical {A}spects of {S}election},
    SERIES = {London Mathematical Society Student Texts},
    VOLUME = {7},
 PUBLISHER = {Cambridge University Press, Cambridge},
      YEAR = {1988},
     PAGES = {viii+341},
      ISBN = {0-521-35288-6; 0-521-35838-8},
   MRCLASS = {92D15 (58F10 58F21 58F40 92-01)},
  MRNUMBER = {1071180},
MRREVIEWER = {Gabriela\ Schranz-Kirlinger},
}

@article {CGKPR,
    AUTHOR = {Cosso, A. and Gozzi, F. and Kharroubi, I. and Pham,
              H. and Rosestolato, M.},
     TITLE = {Master {B}ellman equation in the {W}asserstein space:
              uniqueness of viscosity solutions},
   JOURNAL = {Trans. Amer. Math. Soc.},
  FJOURNAL = {Transactions of the American Mathematical Society},
    VOLUME = {377},
      YEAR = {2024},
    NUMBER = {1},
     PAGES = {31--83},
      ISSN = {0002-9947,1088-6850},
   MRCLASS = {49L25 (35B51 35Q89 49N80)},
  MRNUMBER = {4684588},
MRREVIEWER = {Benjamin\ Seeger},
       DOI = {10.1090/tran/8986},
       URL = {https://doi.org/10.1090/tran/8986},
}

@article {CDJS,
    AUTHOR = {Cardaliaguet, P. and Daudin, S. and Jackson, J. and
              Souganidis, P. E.},
     TITLE = {An algebraic convergence rate for the optimal control of
              {M}c{K}ean-{V}lasov dynamics},
   JOURNAL = {SIAM J. Control Optim.},
  FJOURNAL = {SIAM Journal on Control and Optimization},
    VOLUME = {61},
      YEAR = {2023},
    NUMBER = {6},
     PAGES = {3341--3369},
      ISSN = {0363-0129,1095-7138},
   MRCLASS = {93E20 (49N80 60H30)},
  MRNUMBER = {4665660},
MRREVIEWER = {Agamirza\ E.\ Bashirov},
       DOI = {10.1137/22M1486789},
       URL = {https://doi.org/10.1137/22M1486789},
}

@article {BFY,
    AUTHOR = {Bensoussan, A. and Frehse, J. and Yam, S. C.
              P.},
     TITLE = {The master equation in mean field theory},
   JOURNAL = {J. Math. Pures Appl. (9)},
  FJOURNAL = {Journal de Math\'ematiques Pures et Appliqu\'ees. Neuvi\`eme
              S\'erie},
    VOLUME = {103},
      YEAR = {2015},
    NUMBER = {6},
     PAGES = {1441--1474},
      ISSN = {0021-7824,1776-3371},
   MRCLASS = {35Q93 (49N70 60H15 91A06)},
  MRNUMBER = {3343705},
MRREVIEWER = {Caterina\ Sartori},
       DOI = {10.1016/j.matpur.2014.11.005},
       URL = {https://doi.org/10.1016/j.matpur.2014.11.005},
}

@article {BCP,
    AUTHOR = {Bayraktar, E. and Cosso, A. and Pham, H.},
     TITLE = {Randomized dynamic programming principle and {F}eynman-{K}ac
              representation for optimal control of {M}c{K}ean-{V}lasov
              dynamics},
   JOURNAL = {Trans. Amer. Math. Soc.},
  FJOURNAL = {Transactions of the American Mathematical Society},
    VOLUME = {370},
      YEAR = {2018},
    NUMBER = {3},
     PAGES = {2115--2160},
      ISSN = {0002-9947,1088-6850},
   MRCLASS = {49L20 (60H10 60H30 60K35 93E20)},
  MRNUMBER = {3739204},
MRREVIEWER = {Salvatore\ Federico},
       DOI = {10.1090/tran/7118},
       URL = {https://doi.org/10.1090/tran/7118},
}

@article {GT,
    AUTHOR = {Gangbo, W. and Nguyen, T. and Tudorascu, A.},
     TITLE = {Hamilton-{J}acobi equations in the {W}asserstein space},
   JOURNAL = {Methods Appl. Anal.},
  FJOURNAL = {Methods and Applications of Analysis},
    VOLUME = {15},
      YEAR = {2008},
    NUMBER = {2},
     PAGES = {155--183},
      ISSN = {1073-2772,1945-0001},
   MRCLASS = {49L25 (35F21 47J30 82C40)},
  MRNUMBER = {2481677},
MRREVIEWER = {Fabio\ Camilli},
       DOI = {10.4310/MAA.2008.v15.n2.a4},
       URL = {https://doi.org/10.4310/MAA.2008.v15.n2.a4},
}

@article {GNT,
    AUTHOR = {Gangbo, W. and Tudorascu, A.},
     TITLE = {On differentiability in the {W}asserstein space and
              well-posedness for {H}amilton-{J}acobi equations},
   JOURNAL = {J. Math. Pures Appl. (9)},
  FJOURNAL = {Journal de Math\'ematiques Pures et Appliqu\'ees. Neuvi\`eme
              S\'erie},
    VOLUME = {125},
      YEAR = {2019},
     PAGES = {119--174},
      ISSN = {0021-7824,1776-3371},
   MRCLASS = {58D25 (35B30 35D40 35F21 46G05)},
  MRNUMBER = {3944201},
MRREVIEWER = {Asuka\ Takatsu},
       DOI = {10.1016/j.matpur.2018.09.003},
       URL = {https://doi.org/10.1016/j.matpur.2018.09.003},
}

@book {CarmonaDelarue2,
    AUTHOR = {Carmona, R. and Delarue, F.},
     TITLE = {Probabilistic {T}heory of {M}ean {F}ield {G}ames with {A}pplications 
              {II}: {M}ean {F}ield {G}ames with {C}ommon {N}oise and {M}aster {E}quations},
    SERIES = {Probability Theory and Stochastic Modelling},
    VOLUME = {84}, 
 PUBLISHER = {Springer, Cham},
      YEAR = {2018},
     PAGES = {xxiv+697},
      ISBN = {978-3-319-56435-7; 978-3-319-56436-4},
   MRCLASS = {60-02 (35R60 49L20 60G55 60H10 60H30 91A13 91A15)},
  MRNUMBER = {3753660},
MRREVIEWER = {Vassili\ N.\ Kolokol\cprime tsov},
}

@book {CarmonaDelarue1,
    AUTHOR = {Carmona, R. and Delarue, F.},
     TITLE = {Probabilistic {T}heory of {M}ean {F}ield {G}ames with {A}pplications 
              {I}: {M}ean {F}ield {FBSDE}s, {C}ontrol, and {G}ames},
    SERIES = {Probability Theory and Stochastic Modelling},
    VOLUME = {83}, 
 PUBLISHER = {Springer, Cham},
      YEAR = {2018},
     PAGES = {xxv+713},
      ISBN = {978-3-319-56437-1; 978-3-319-58920-6},
   MRCLASS = {60-02 (35R60 49N70 49N90 60H15 60H30 91A15 93E20)},
  MRNUMBER = {3752669},
MRREVIEWER = {Vassili\ N.\ Kolokol\cprime tsov},
}

@incollection {CardaliaguetPorretta,
    AUTHOR = {Cardaliaguet, P. and Porretta, A.},
     TITLE = {An introduction to mean field game theory},
 BOOKTITLE = {Mean {F}ield {G}ames},
    SERIES = {Lecture Notes in Math.},
    VOLUME = {2281},
     PAGES = {1--158},
 PUBLISHER = {Springer, Cham},
      YEAR = {2020},
      ISBN = {978-3-030-59837-2; 978-3-030-59836-5},
   MRCLASS = {35Q91 (49N80 60H30 60K35 91A16 93E20)},
  MRNUMBER = {4214774},
MRREVIEWER = {John\ G.\ O'Hara},
       DOI = {10.1007/978-3-030-59837-2\_1},
       URL = {https://doi.org/10.1007/978-3-030-59837-2_1},
}

@article {LasryLions,
    AUTHOR = {Lasry, J.-M. and Lions, P.-L.},
     TITLE = {Mean field games},
   JOURNAL = {Jpn. J. Math.},
  FJOURNAL = {Japanese Journal of Mathematics},
    VOLUME = {2},
      YEAR = {2007},
    NUMBER = {1},
     PAGES = {229--260},
      ISSN = {0289-2316,1861-3624},
   MRCLASS = {91A23 (82B05 91B28)},
  MRNUMBER = {2295621},
       DOI = {10.1007/s11537-007-0657-8},
       URL = {https://doi.org/10.1007/s11537-007-0657-8},
}

@article {MFG_Caines,
    AUTHOR = {Huang, M. and Malham\'e, R. P. and Caines, P. E.},
     TITLE = {Large population stochastic dynamic games: closed-loop
              {M}c{K}ean-{V}lasov systems and the {N}ash certainty
              equivalence principle},
   JOURNAL = {Commun. Inf. Syst.},
  FJOURNAL = {Communications in Information and Systems},
    VOLUME = {6},
      YEAR = {2006},
    NUMBER = {3},
     PAGES = {221--251},
      ISSN = {1526-7555,2163-4548},
   MRCLASS = {91A15 (49L20 91A23)},
  MRNUMBER = {2346927},
       DOI = {10.4310/cis.2006.v6.n3.a5},
       URL = {https://doi.org/10.4310/cis.2006.v6.n3.a5},
}

@article {Daudin2,
    AUTHOR = {Daudin, S. and Jackson, J. and Seeger, B.},
     TITLE = {Well-posedness of {H}amilton-{J}acobi equations in the
              {W}asserstein space: non-convex {H}amiltonians and common
              noise},
   JOURNAL = {Comm. Partial Differential Equations},
  FJOURNAL = {Communications in Partial Differential Equations},
    VOLUME = {50},
      YEAR = {2025},
    NUMBER = {1-2},
     PAGES = {1--52},
      ISSN = {0360-5302,1532-4133},
   MRCLASS = {35F21 (35D40 49L25 49N80)},
  MRNUMBER = {4858218},
       DOI = {10.1080/03605302.2024.2432678},
       URL = {https://doi.org/10.1080/03605302.2024.2432678},
}

@article {Daudin,
    AUTHOR = {Daudin, S. and Delarue, F. and Jackson, J.},
     TITLE = {On the optimal rate for the convergence problem in mean field
              control},
   JOURNAL = {J. Funct. Anal.},
  FJOURNAL = {Journal of Functional Analysis},
    VOLUME = {287},
      YEAR = {2024},
    NUMBER = {12},
     PAGES = {Paper No. 110660, 94},
      ISSN = {0022-1236,1096-0783},
   MRCLASS = {49N80 (49L25 65C35)},
  MRNUMBER = {4794919},
       DOI = {10.1016/j.jfa.2024.110660},
       URL = {https://doi.org/10.1016/j.jfa.2024.110660},
}

@article {approximation_HJB2,
    AUTHOR = {Mayorga, S. and \'Swi\k{e}ch, A.},
     TITLE = {Finite dimensional approximations of
              {H}amilton-{J}acobi-{B}ellman equations for stochastic
              particle systems with common noise},
   JOURNAL = {SIAM J. Control Optim.},
  FJOURNAL = {SIAM Journal on Control and Optimization},
    VOLUME = {61},
      YEAR = {2023},
    NUMBER = {2},
     PAGES = {820--851},
      ISSN = {0363-0129,1095-7138},
   MRCLASS = {35D40 (35F21 35R15 49L25 49N80 93E20)},
  MRNUMBER = {4580743},
       DOI = {10.1137/22M1489186},
       URL = {https://doi.org/10.1137/22M1489186},
}

@article {approximation_HJB,
    AUTHOR = {Gangbo, W. and Mayorga, S. and \'Swi\k{e}ch,
              A.},
     TITLE = {Finite dimensional approximations of
              {H}amilton-{J}acobi-{B}ellman equations in spaces of
              probability measures},
   JOURNAL = {SIAM J. Math. Anal.},
  FJOURNAL = {SIAM Journal on Mathematical Analysis},
    VOLUME = {53},
      YEAR = {2021},
    NUMBER = {2},
     PAGES = {1320--1356},
      ISSN = {0036-1410,1095-7154},
   MRCLASS = {35D40 (26E15 35F21 35R15 49L25 49N80)},
  MRNUMBER = {4226237},
       DOI = {10.1137/20M1331135},
       URL = {https://doi.org/10.1137/20M1331135},
}

@article {Cui_Hamiltonian,
    AUTHOR = {Cui, J. and Liu, S. and Zhou, H.},
     TITLE = {Wasserstein {H}amiltonian flow with common noise on graph},
   JOURNAL = {SIAM J. Appl. Math.},
  FJOURNAL = {SIAM Journal on Applied Mathematics},
    VOLUME = {83},
      YEAR = {2023},
    NUMBER = {2},
     PAGES = {484--509},
      ISSN = {0036-1399,1095-712X},
   MRCLASS = {58B20 (35Q41 49Q20 58J65)},
  MRNUMBER = {4577196},
MRREVIEWER = {Jialin\ Hong},
       DOI = {10.1137/22M1490697},
       URL = {https://doi.org/10.1137/22M1490697},
}

@article {Chow2,
    AUTHOR = {Chow, S.-N. and Huang, W. and Li, Y. and Zhou, H.},
     TITLE = {Fokker-{P}lanck equations for a free energy functional or
              {M}arkov process on a graph},
   JOURNAL = {Arch. Ration. Mech. Anal.},
  FJOURNAL = {Archive for Rational Mechanics and Analysis},
    VOLUME = {203},
      YEAR = {2012},
    NUMBER = {3},
     PAGES = {969--1008},
      ISSN = {0003-9527,1432-0673},
   MRCLASS = {35R02 (60H30 60J25)},
  MRNUMBER = {2928139},
MRREVIEWER = {Sergey\ Dashkovskiy},
       DOI = {10.1007/s00205-011-0471-6},
       URL = {https://doi.org/10.1007/s00205-011-0471-6},
}

@article {Chow1,
    AUTHOR = {Chow, S.-N. and Li, W. and Zhou, H.},
     TITLE = {A discrete {S}chr\"odinger equation via optimal transport on
              graphs},
   JOURNAL = {J. Funct. Anal.},
  FJOURNAL = {Journal of Functional Analysis},
    VOLUME = {276},
      YEAR = {2019},
    NUMBER = {8},
     PAGES = {2440--2469},
      ISSN = {0022-1236,1096-0783},
   MRCLASS = {35R02 (05C90 35Q55 49Q20)},
  MRNUMBER = {3926122},
       DOI = {10.1016/j.jfa.2019.02.005},
       URL = {https://doi.org/10.1016/j.jfa.2019.02.005},
}

@article {Maas,
    AUTHOR = {Maas, J.},
     TITLE = {Gradient flows of the entropy for finite {M}arkov chains},
   JOURNAL = {J. Funct. Anal.},
  FJOURNAL = {Journal of Functional Analysis},
    VOLUME = {261},
      YEAR = {2011},
    NUMBER = {8},
     PAGES = {2250--2292},
      ISSN = {0022-1236,1096-0783},
   MRCLASS = {49Q20 (49J10 60J27)},
  MRNUMBER = {2824578},
MRREVIEWER = {Nung\ Kwan\ Yip},
       DOI = {10.1016/j.jfa.2011.06.009},
       URL = {https://doi.org/10.1016/j.jfa.2011.06.009},
}

@article {Liwuchen,
    AUTHOR = {Gangbo, W. and Li, W. and Mou, C.},
     TITLE = {Geodesics of minimal length in the set of probability measures
              on graphs},
   JOURNAL = {ESAIM Control Optim. Calc. Var.},
  FJOURNAL = {ESAIM. Control, Optimisation and Calculus of Variations},
    VOLUME = {25},
      YEAR = {2019},
     PAGES = {Paper No. 78, 36},
      ISSN = {1292-8119,1262-3377},
   MRCLASS = {49Q20 (28A33 49K35 60J20)},
  MRNUMBER = {4039140},
MRREVIEWER = {Erik\ J.\ Balder},
       DOI = {10.1051/cocv/2018052},
       URL = {https://doi.org/10.1051/cocv/2018052},
}

@article {MCC,
    AUTHOR = {Gangbo, W. and Mou, C. and \'Swi\k{e}ch,
              A.},
     TITLE = {Well-posedness for {H}amilton-{J}acobi equations on the
              {W}asserstein space on graphs},
   JOURNAL = {Calc. Var. Partial Differential Equations},
  FJOURNAL = {Calculus of Variations and Partial Differential Equations},
    VOLUME = {63},
      YEAR = {2024},
    NUMBER = {7},
     PAGES = {Paper No. 160, 41},
      ISSN = {0944-2669,1432-0835},
   MRCLASS = {35D40 (35F21 35R15 49L25 49Q20 60H30)},
  MRNUMBER = {4768504},
       DOI = {10.1007/s00526-024-02758-w},
       URL = {https://doi.org/10.1007/s00526-024-02758-w},
}

\end{document}